\documentclass{amsart}

\usepackage{bm,color,epsfig,enumerate,caption}
\usepackage{amsmath,amssymb}
\usepackage{amsthm}
\usepackage{amsrefs}
\usepackage{indentfirst}
\usepackage{mathrsfs}
\usepackage{graphicx}
\usepackage{hyperref}
\usepackage{xcolor}
\usepackage{fourier}
\usepackage[margin=1.2in]{geometry}
\usepackage{algpseudocode}
\usepackage{algorithm}
\usepackage{algorithmicx}



\theoremstyle{plain}
\newtheorem{theorem}{Theorem}
\newtheorem{lemma}{Lemma}

\newtheorem{algo}[theorem]{Algorithm}
\newtheorem{algomain}{Algorithm}

\theoremstyle{definition}
\newtheorem{defn}[theorem]{Definition}

\theoremstyle{remark}

\newcommand{\grad}{\nabla}

\newcommand{\ud}{\,\mathrm{d}}

\newcommand{\RR}{\mathbb{R}}
\newcommand{\NN}{\mathbb{N}}
\newcommand{\ZZ}{\mathbb{Z}}
\newcommand{\TT}{\mathrm{T}}

\newcommand{\Or}{\mathcal{O}}

\newcommand{\Angle}{\text{Angle}}
\newcommand{\TE}{\text{TE}}
\newcommand{\BD}{\text{BD}}
\newcommand{\Vol}{\text{Vol}}
\newcommand{\SNR}{\text{SNR}}

\newcommand{\wt}[1]{\widetilde{#1}}

\IfFileExists{mathabx.sty}%
  {\DeclareFontFamily{U}{mathx}{\hyphenchar\font45}%
   \DeclareFontShape{U}{mathx}{m}{n}{<->mathx10}{}%
   \DeclareSymbolFont{mathx}{U}{mathx}{m}{n}%
   \DeclareFontSubstitution{U}{mathx}{m}{n}%
   \DeclareMathAccent{\widebar}{0}{mathx}{"73}%
}{%
  \PackageWarning{mathabx}{%
    Package mathabx not available, therefore\MessageBreak substituting
    widebar with overline\MessageBreak }%
  \newcommand{\widebar}[1]{\overline{#1}}%
}

\newcommand{\eps}{\epsilon}

\newcommand{\VAR}{\text{Var}}
\newcommand{\abs}[1]{\lvert#1\rvert}

\renewcommand{\Re}{\mathfrak{Re}}


\title{Crystal image analysis using $2D$ synchrosqueezed transforms}

\author{Haizhao Yang}
\address{Department of Mathematics, Duke University}
\email{haizhao@math.duke.edu}

\author{Jianfeng Lu}
\address{Departments of Mathematics, Physics, and Chemistry, Duke University} 
\email{jianfeng@math.duke.edu}

\author{Lexing Ying}
\address{Department of Mathematics and Institute for Computational \& Mathematical Engineering, Stanford University}
\email{lexing@math.stanford.edu}

\date{\today}

\begin{document}

\begin{abstract}

We propose efficient algorithms  based on a band-limited version of $2D$ synchrosqueezed transforms to extract mesoscopic and microscopic information from atomic crystal images. The methods analyze atomic crystal images as an assemblage of non-overlapping segments of $2D$ general intrinsic mode type functions, which are superpositions of non-linear wave-like components. 
In particular, crystal defects are interpreted as the irregularity of local energy; crystal rotations are described as the angle deviation of local wave vectors from their references; the gradient of a crystal elastic deformation can be obtained by a linear system generated by local wave vectors. Several numerical examples of synthetic and real crystal images are provided to illustrate the efficiency, robustness, and reliability of our methods.

\end{abstract}

\keywords{Crystal defect, elastic deformation, crystal rotation, $2D$ general wave shape, $2D$ general intrinsic mode type function, $2D$ band-limited synchrosqueezed transforms.}

\subjclass[2000]{65T99,74B20,74E15,74E25}

\maketitle

\section{Introduction}


In materials science, crystal image analysis on a microscopic length
scale has become an important research topic recently
\cites{ZengGipson:07, SingerSinger:06, Berkels:08, Berkels:10,
  Strekalovskiy:11, ElseyWirth:13, ElseyWirth:MMS, StukowskiAlbe1:10,
  StukowskiAlbe2:10}. The development of image acquisition techniques
(such as high resolution transmission electron microscopy (HR-TEM) \cite{King:98}) and the advancement of atomic simulation of molecular
dynamics \cite{Abraham2002} or mean field models like phase field
crystals \cites{ElderGrantMartin:04, ElseyWirth:12} create
data of large scale crystalline solids with defects at an atomic
resolution. This provides unprecedented opportunities in understanding
materials properties at a microscopic level. Given that defects such as
dislocations and grain boundaries are of crucial importance to materials
properties, it is essential to have efficient tools to analyze large
scale images and to extract information about defects in the system.
Previously, this was done through analyzing the microscopic image
manually; this is becoming impractical due to the extraordinarily large magnitude of the
measurements and simulations we are dealing with nowadays, in particular,
in the case of analyzing a time series of crystal images during an
evolution process. Therefore, this calls for the development of
efficient and automatic crystal image analysis tools.

In this work, we will limit our scope to $2D$ images of (slices of)
polycrystalline materials and aim at extracting mesoscopic and
microscopic information from the given images. This involves the
identification of point defects, dislocations, deformations, grains
and grain boundaries.  Grains are material regions that are composed
of a single crystal, possibly with different orientations (which will
be referred as crystal rotations later, since we are working in $2D$);
they are usually slightly deformed due to defects and
interactions with neighboring grains at grain boundaries.  Grains might contain point defects like vacancies and interstitials,
for which we would like identify their positions.  Crystal analysis should also be able to locate cores and Burgers vectors for dislocations,
which play an important role in crystal plasticity. We refer the
readers to \cite{Bravais} for more background details of
polycrystals and crystal defects. Our proposed method provides a reliable and efficient way for extracting this information from crystal images. 


\subsection{Our contribution}

Due to the lattice structure on the microscopic scale, crystal images are highly oscillatory (see Figure \ref{fig:osc} (right) as an example). Inspired by this, we introduce a new characterization of grains by studying $2D$ general shape functions and $2D$ general intrinsic mode type functions, which are superpositions of non-linear and non-stationary wave-like components (more precise definitions will be given in Section~\ref{sec:model}). Using these two concepts, a crystal image can be considered as an assemblage of $2D$ general intrinsic mode type functions with non-overlapping supports, specified propagating directions and smoothly varying local wave vectors (see Figure \ref{fig:osc} (left) as an example). In this model, crystal defects and grain boundaries can be detected through the discontinuity and irregularity of these components; crystal rotations and crystal deformations are estimated from a linear system provided by local wave vectors of underlying wave-like components.

\begin{figure}[ht!]
  \begin{center}
    \begin{tabular}{cc}
   \includegraphics[height=1.8in]{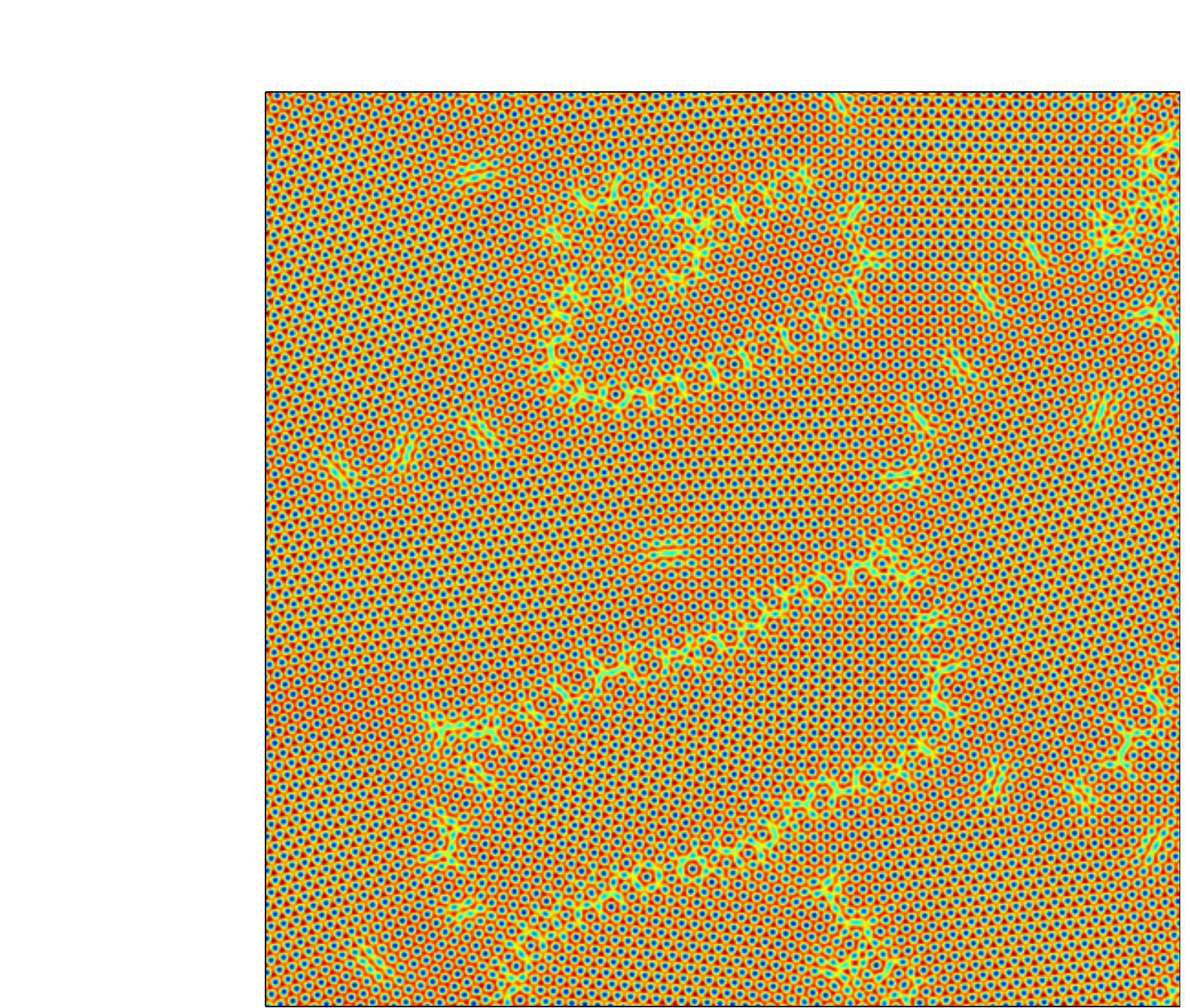}&     \includegraphics[height=1.8in]{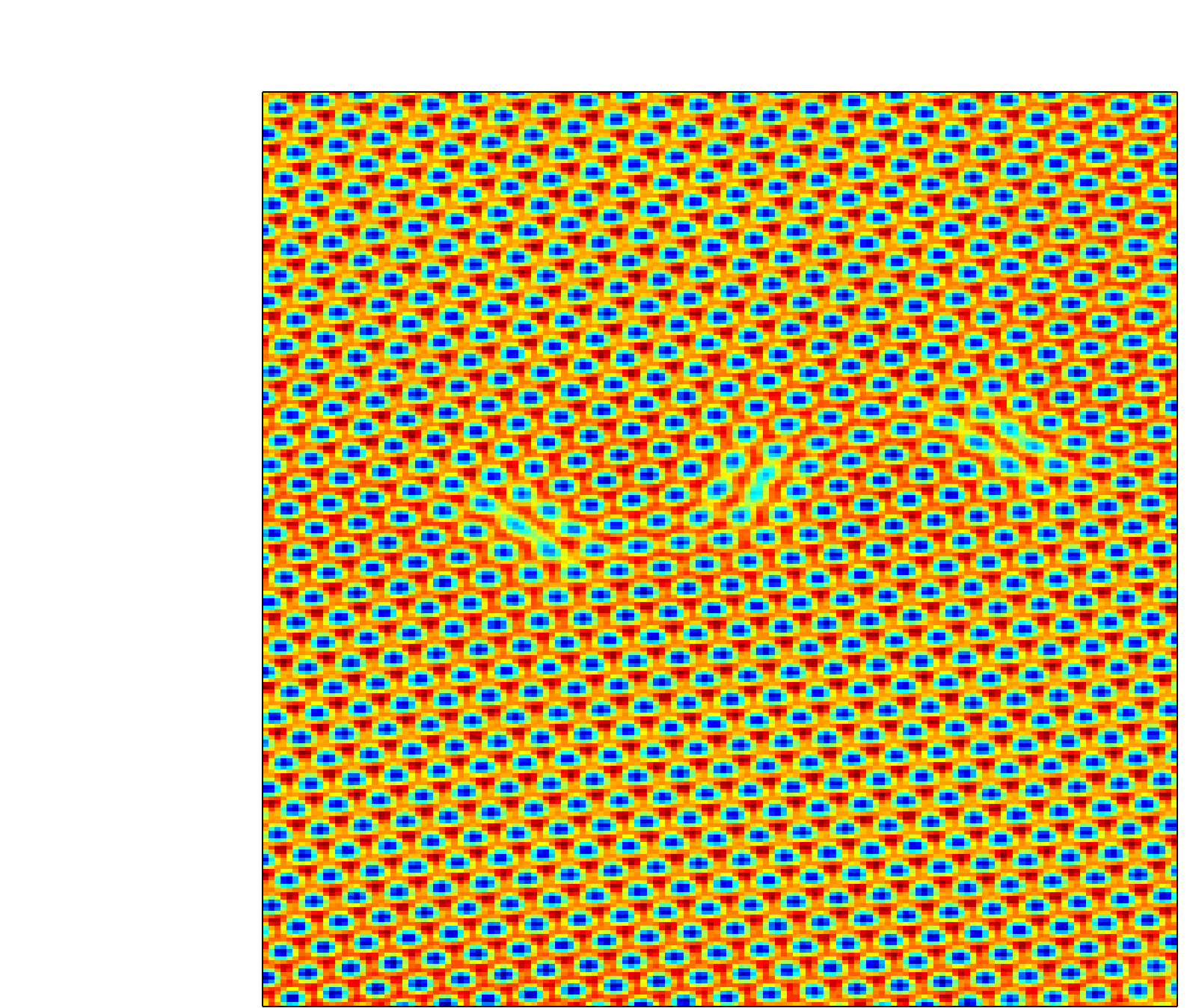}   
    \end{tabular}
  \end{center}
  \caption{A phase field crystal (PFC) image and its zoomed-in image. Courtesy of Benedikt Wirth \cite{ElseyWirth:MMS}.}
  \label{fig:osc}
\end{figure}

The study of superpositions of non-linear and non-stationary wave-like components has been an active line of analysis in the past two decades. The synchrosqueezing technique proposed in \cite{DaubechiesMaes:96} is applied to analyze and decompose superpositions of $1D$ signals using the synchrosqueezed wavelet transform in \cite{DaubechiesLuWu:11}. Following the same methodology, synchrosqueezed wave packet transforms \cite{YangYing:2013} and synchrosqueezed curvelet transforms \cite{YangYing:preprint} are proposed to analyze and decompose superpositions of $2D$ components. $2D$ synchrosqueezed transforms give a concentrated spectrum on the phase (spatial frequency) plane, which can help to obtain information like local wave vectors and a spectral density in the $4D$ phase space domain. Recent study on the robustness of these synchrosqueezed transforms in \cite{YangYingPreprint:2014} supports their applications to real problems where noise is ubiquitous. These transforms have been applied to analyze seismic wave images with great success in \cites{YangYing:2013, YangYing:preprint}. The numerical experiements in \cite{Canvas} show that these transforms have better statistical stability than the window Fourier transform in the application of canvas thread counting.

In this paper, we propose efficient algorithms to analyze crystal images by adapting $2D$ synchrosqueezed transforms to the problems at hand. First, $2D$ synchrosqueezed transforms are applied to obtain the synchrosqueezed energy distributions of underlying wave-like components. Second, since the synchrosqueezed energy is concentrating on local wave vectors, these local wave vectors can be estimated by averaging the supports of the synchrosqueezed energy distribution. Third, the irregularity of each wave-like component can be measured by the irregularity of its corresponding synchrosqueezed energy distribution. Finally, with such information ready, the crystal image can be analyzed as discussed previously.

\subsection{Previous works}

One important class of methods for crystal image analysis is variation based. General variational methods for texture
classification and segmentation have been extensively studied (see \cites{Meyer:2001, Chan:01, Vese:02, Sandberg:02, Vese:03,
  Aujol2006}, for example).  

The method in \cite{Berkels:08} proposed to
segment crystal images into disjoint regions with different constant
crystal rotations using the Chan-Vese level-set approximation in
\cite{Chan:01} of the piecewise constant Mumford-Shah segmentation in
\cite{Mumford}. The method involves search for a global deformation
$\phi:\Omega\rightarrow\RR^2$ acting on all grains. To speed up the
expensive optimization in \cite{Berkels:08}, the authors in
\cite{Berkels:10} proposed a convex relaxation via functional lifting
and the authors in \cite{Strekalovskiy:11} proposed a more efficient
version by penalizing the segmentation interfaces according to jumps
in crystal rotations. Although a corresponding GPU implementation of
these methods is very fast, a bottleneck still exists due to the large
memory cost coming from the additional dimension for the functional
lifting process.

More recently, Matt Elsey and Benedikt Wirth
\cites{ElseyWirth:13,ElseyWirth:MMS} proposed a variational model based
on finding a tensor map, the gradient of the inverse deformation
$\grad (\phi^{-1}):\Omega\rightarrow\RR^{2\times 2}$ and developed an
efficient $L^1-L^2$ regularization scheme. Crystal defects, rotations,
grain boundaries and strain can be recovered by the information hidden
in $\grad (\phi^{-1})$. In addition, a corresponding GPU
implementation is proposed and it shortens the runtime for a $1024^2$
image to about $40$ seconds.

Another class of methods for texture classification and segmentation are based on a local, direction sensitive frequency analysis \cites{Unser:95, SingerSinger:06}. \cite{Unser:95} constructed an over-complete wavelet frame for texture feature extraction and segmented textures in a reduced feature space by clustering techniques. However, the frame is not sensitive to crystal rotations and local defects. Hence, it cannot distinguish two grains with a small angle boundary and cannot detect local defects. This method is neither capable of providing estimates of crystal deformations. The work  \cite{SingerSinger:06} develops a heuristic method that uses a ``wavelet like'' patch according to a given reference crystal and quantify the similarity between local crystal patches with the reference. By looking up a prefabricated table of crystal rotation angles and their corresponding similarity, crystal rotations of each crystal patch can be estimated. However, in the case of deformed crystals, this method may be problematic. 

The method to be presented in this paper follows a different spirit from those methods. It is based on a novel model characterizing deformed periodic textures using $2D$ general intrinsic mode type functions and an efficient phase space representation method, $2D$ synchrosqueezed transforms proposed recently. An analytic characterization of periodic textures allows rigorous analysis and indeed it is proved that $2D$ synchrosqueezed transforms can estimate the local wave vectors of underlying wave-like components of textures (grains in this paper) precisely under certain conditions. Most of all, non-linear deformations of crystals are available by solving a simple linear system provided by local wave vectors.

\medskip

The rest of this paper is organized as follows. In Section \ref{sec:model}, we introduce a crystal image model on the microscopic length scale based on $2D$ general intrinsic mode type functions and prove that $2D$ synchrosqueezed transforms are able to estimate the local properties of the underlying wave-like components of general intrinsic mode type functions. In Section \ref{sec:alg}, two efficient algorithms based on $2D$ discrete band-limited synchrosqueezed transforms are proposed to detect crystal defects, estimate crystal rotations and elastic deformations. In Section \ref{sec:examples}, several numerical examples of synthetic and real crystal images are provided to demonstrate the robustness and the reliability of our methods. Finally, we conclude with some discussion on future works in Section \ref{sec:conclusion}.

\section{Crystal image models and theory}
\label{sec:model}

In this section, at first, we describe a new model to characterize atomic crystal images. Inspired by the periodicity of crystal images, $2D$ general intrinsic mode type functions are defined as a key ingredient of the characterization of a perfect crystal image. Second, $2D$ synchrosqueezed transforms are briefly recalled to analyze underlying wave-like components in $2D$ general intrinsic mode type functions. We will prove that the $2D$ synchrosqueezed transforms accurately estimate the local wave vectors of these wave-like components, and hence provides a useful tool for crystal image analysis.  

\subsection{$2D$ general intrinsic mode type functions}

A perfect crystal image, i.e., a single undeformed grain without defects, is characterized by a periodic function with two space
variables. We will limit ourselves to simple crystals (Bravais
lattices). In $2D$ space domain, there are five kinds of Bravais
lattices: oblique, rectangular, centered rectangular (rhombic),
hexagonal, and square \cite{Bravais}. The lattices and corresponding
unit cells are shown in Figure \ref{fig:Bravais}.
\begin{figure}[ht!]
  \begin{center}
    \begin{tabular}{c}
      \includegraphics[height=3.2in]{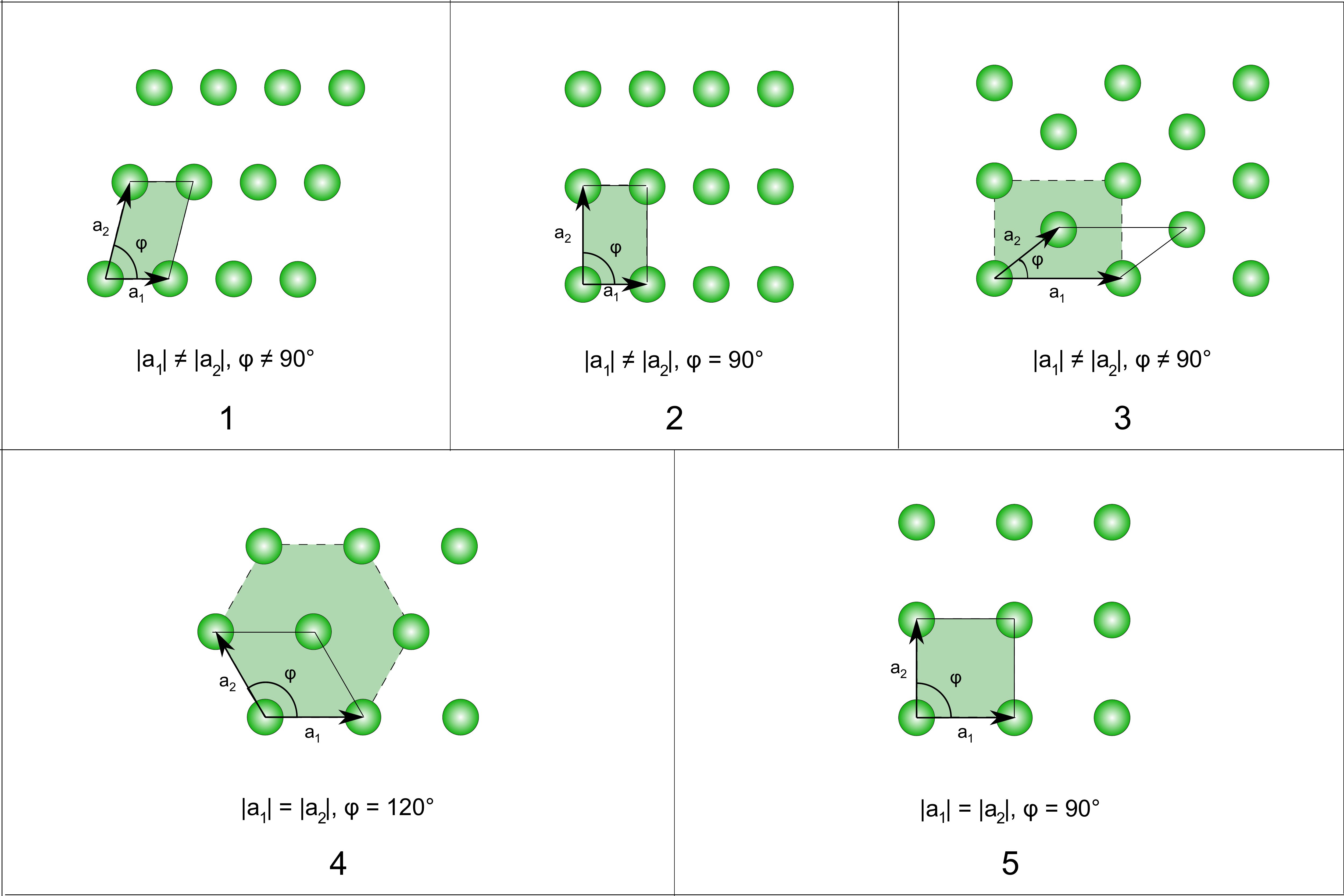}
    \end{tabular}
  \end{center}
  \caption{Five fundamental $2D$ Bravais lattices: 1 oblique, 2 rectangular, 3 centered rectangular (rhombic), 4 hexagonal, and 5 square. Courtesy of Wikipedia.}
  \label{fig:Bravais}
\end{figure}
For each lattice type, through an affine transform, we can transform the unit cell to a square. As an example, for the hexagonal lattice, the transform is given by 
\begin{equation*}
  x \mapsto F x; \quad F = 
  \begin{pmatrix}
    1 & -\frac{\sqrt{3}}{3} \\ 
    0 & \frac{2\sqrt{3}}{3}
  \end{pmatrix}. 
\end{equation*}
Hence, by introducing the matrix $F$, we can set up a reference
configuration function as
\[
f(x)=\alpha S(2\pi F x)+c,
\]
for $x\in \RR^2$. Here, $S(x)$ is a $2\pi$ periodic general shape
function (the rigorous definition is given later), which has a unit $L^2([-\pi, \pi]^2)$-norm and a zero mean. $\alpha$ and $c$ are two parameters. 

Allowing a rotation and a translation, a crystal image function for an undeformed grain is then modeled by 
\[
f(x)=\alpha S(2\pi N F(R_{\theta} x+z))+c, 
\]
where $N$ is the reciprocal of the
lattice parameter, $R_{\theta}$ is the rotation matrix
\[R_\theta=\left( \begin{array}{cc}
\cos\theta & -\sin\theta\\
\sin\theta& \cos\theta\end{array}\right)\]
corresponding to a rotation angle $\theta$, 
and $z\in \RR^2$ gives the translation. 
In the case of multi-grains, it is expected that
\[
f(x)=\sum_{k=1}^M \chi_{\Omega_k} (x)\left( \alpha_k S_k\left(2\pi N_k F_k (R_{\theta_k} x+z_k)\right)+c_k\right),
\]
where $\chi_{\Omega_k}(x)$ is the indicator function defined as
  \begin{equation}
    \chi_{\Omega_k}(x)=
    \begin{cases}
      1,
      & x\in\Omega_k\\
      0, 
      & \text{otherwise},
    \end{cases}
  \end{equation}
and $\Omega_k$ is the domain of the $k$th grain with $\Omega_k\cap \Omega_j=\emptyset$, if $k\neq j$. With these notations, grain boundaries are  interpreted as $\cup\partial \Omega_k$ (in real crystal images, grain boundaries would be a thin transition region instead of a sharp boundary $\cup\partial \Omega_k$). In the presence of local defects, e.g., an isolated defect and a terminating line of defects, $\cup\partial \Omega_k$ may include irregular boundaries and may contain point boundaries inside $\cup\Omega_k$.

Considering an uneven distribution of atoms on the mesoscopic length scale and possible reflection of light when generating crystal images, the amplitudes $\alpha_k$ and the global trends $c_k$ are assumed to be smooth functions $\alpha_k(x)$ and $c_k(x)$, respectively, in the domain $\Omega_k$. 

Notice that the rotation matrix $R_{\theta_k}$ and the translation position $z_k$ act as a linear transformation $\psi_k$ from $x$ to $\psi_k(x)=R_{-\theta_k}(x-z_k)$. In a more complicated case, a smooth non-linear deformation $\psi_k:\RR^2\to \RR^2$ transferring an atom from position $x$ to $\psi_k(x)$ is introduced. Let $\phi_k(x)=\psi_k^{-1}(x)$ defined in $\Omega_k$, then the crystal image function becomes 
\begin{equation}
f(x)=\sum_{k=1}^M \chi_{\Omega_k} (x)\left( \alpha_k(x) S_k\left(2\pi N_k F_k \phi_k(x)\right)+c_k(x)\right).
\label{eqn:crystal}
\end{equation}
This motivates the definition of $2D$ general shape functions and $2D$ general intrinsic mode type functions as follows.

\begin{defn}[$2D$ general shape function]
\label{def:GSF}

The $2D$ general shape function class ${S}_M$ consists of periodic functions $S(x)$ with a periodicity $(2\pi,2\pi)$, a unit $L^2([-\pi,\pi]^2)$-norm, and an $L^\infty$-norm bounded by $M$ satisfying the following conditions:
\begin{enumerate}
\item The $2D$ Fourier series of $S(x)$ is uniformly convergent;
\item $\sum_{n \in \ZZ^2} |\widehat{S}(n)|\leq M$ and $\widehat{S}(0,0)=0$;
\item Let $\Lambda$ be the set of integers $\{|n_1|\in \NN: \widehat{S}(n_1,n_2)\neq 0\text{ or }\widehat{S}(n_2,n_1)\neq 0\text{ for some }n_2\in\ZZ\}$. 
The greatest common divisor of all the elements in $\Lambda$ is $1$.
\end{enumerate}
\end{defn}
The requirement that $\widehat{S}(0,0)=0$, which is equivalent to a zero mean over $[-\pi,\pi]^2$, guarantees a well-separation between the oscillatory part $\alpha_k(x) S_k\left(2\pi N_k F_k \phi_k(x)\right)$ and the smooth trend $c_k(x)$ in \eqref{eqn:crystal}, when $N_k$ is sufficiently large. The third condition implies the uniqueness of similar oscillatory patterns in ${S}_M$ up to a scaling, i.e., if $S(x)\in{S}_M$, $S(Nx)\notin{S}_M$ for any positive integer $N>1$. 

\begin{defn}[$2D$ general intrinsic mode type function (GIMT)]
  \label{def:GIMTF}
  A function $f(x)=\alpha(x)s(2\pi N F \phi(x))$ is a $2D$ GIMT of type $(M,N,F)$, if $S(x)\in {S}_M$, $\alpha(x)$ and $\phi(x)$ satisfy the conditions below.
\begin{align*} 
&\alpha(x)\in C^\infty, \quad |\grad\alpha|\leq M, \quad 1/M \leq \alpha\leq M, \\
 &\phi(x)\in C^\infty, \quad 1/M \leq \left|  \nabla (n^{\TT} F \phi) / \abs{n^{\TT} F} \right| \leq M,\quad \text{and}\\
  &\left| \nabla^2 (n^{\TT} F \phi) / \abs{n^{\TT}F} \right|\leq M, \quad \forall n \in \ZZ^2 \quad \text{s.t.}\quad \widehat{s}(n)\neq 0.
   \end{align*}
\end{defn}

Hence, in the domain $\Omega_k$ of each grain, the crystal image is a superposition of a $2D$ general intrinsic mode type function and a smooth trend. Applying the $2D$ Fourier series of each $2D$ general shape function $S_k(x)$, it holds that
\begin{equation}
\begin{aligned}
  f(x) &=\sum_{k=1}^M \chi_{\Omega_k} (x)\left( \alpha_k(x) S_k\left(2\pi    N_kF_k\phi_k(x)\right)+c_k(x)\right) \\
  &=\sum_{k=1}^M \chi_{\Omega_k} (x)\left(
    \sum_{n \in \ZZ^2}\alpha_k(x)
    \widehat{S_k}(n)e^{2\pi iN_k n^{\TT} F_k
      \phi_k(x)}+c_k(x)\right),
\label{eqn:imagefunction}
\end{aligned}
\end{equation}
In the domain $\Omega_k$, each underlying wave-like component \[\alpha_k(x) \widehat{S_k}(n)e^{2\pi iN_k n^{\TT} F_k \phi_k(x)}\] is an intrinsic mode type function studied in \cites{YangYing:2013,YangYing:preprint}. Hence, if they satisfy the well-separation condition defined in \cites{YangYing:2013,YangYing:preprint} and each $N_k$ is large enough, the $2D$ synchrosqueezed wave packet transforms and the synchrosqueezed curvelet transforms are expected to estimate the local wave vectors $N_k \grad (n^{\TT} F_k \phi_k(x))$ accurately for $x$ away from $\partial\Omega_k$.
%

Based on the estimates of local wave vectors, it is possible to define and analyze crystal rotations as follows.

\begin{defn} 
  \label{def:LRF}
  Given a reference configuration $s(2\pi N Fx)$, a deformation
  $\phi(x)$, an amplitude function $\alpha(x)$ and a
  trend function $c(x)$ such that $f(x)=\alpha(x)s(2\pi N
  F\phi(x))+c(x)$ is a $2D$ GIMT of type $(M,N,F)$, suppose
  $\widehat{S}(n)\neq 0$, then the reference configuration has a local
  wave vector $v(n)=N n^{\TT} F $ and $f(x)$ has a local wave vector
  $v_\phi =N n^{\TT} F \grad \phi(x)$. The local rotation function of
  $f(x)$ with respect to $v(n)$ is defined as
  \[
  \beta(f)(x)= \arg(v_\phi(x))-\arg(v(n)),
  \]
  where $\arg(v)$ means the argument of a vector $v$.
\end{defn}
In the case of an undeformed crystal image $f(x)=\alpha(x)s(2\pi N F (R_\theta x+z))+c(x)$, $\phi(x)=R_\theta x+z$ is a composition of a rotation and a translation. Then the local rotation function $\beta(f)(x)=\theta$ with respect to any local wave vector $v(n)$. This agrees with an intuition of a global crystal rotation. However, a global crystal rotation is not well defined in a real crystal image due to a non-linear crystal deformation. This motivates the definition of a local crystal rotation in Definition \ref{def:LRF}. Because the non-linear deformation $\phi(x)$ is a smooth function with $\det\left(\grad\phi(x)\right)\approx 1$, local rotation functions $\beta(f)(x)$  vary smoothly and are approximately the same with respect to any local wave vector.

\subsection{$2D$ synchrosqueezed transforms}
For the sake of convenience, we adopt the name $2D$ synchrosqueezed transforms instead of specifying $2D$ synchrosqueezed wave packet transform or $2D$ synchrosqueezed curvelet transforms. Actually, when the scaling parameters $s$ and $t$ in the $2D$ synchrosqueezed curvelet transforms are equal, these transforms become $2D$ synchrosqueezed wave packet transforms. For the same reason, we prefer a uniform name general wave packet transforms, instead of the wave packet transforms in \cite{YangYing:2013} and the general curvelet transforms in \cite{YangYing:preprint}. Before a brief review of these transforms, here are some notations. 
\begin{enumerate}
\item The scaling matrix
\[A_a=\left( \begin{array}{cc}
a^t&0\\
0&a^s 
\end{array}\right),\]
where $a$ is the distance from the center of one general wave packet to the origin in the Fourier domain.
\item A unit vector $e_\theta=(\cos \theta,\sin \theta)^T$ with a rotation angle $\theta$.
\item $\theta_\alpha$ represents the argument of a given vector $\alpha$.
\item $w(x)$ of $x\in \RR^2$ denotes a mother wave packet, which is in the Schwartz class and has a non-negative, radial, real-valued, smooth Fourier transform
$\widehat{w}(\xi)$ with a support equal to a ball $B_d(0)$ centered at the origin with a radius $d\leq 1$ in the Fourier
domain. The mother wave packet is required to obey the admissibility condition: $\exists 0<c_1<c_2<\infty$ such that
\[
c_1\leq \int_0^{2\pi} \int_1^\infty a^{-(t+s)} |\widehat{w}(A^{-1}_a R^{-1}_\theta (\xi-a\cdot e_\theta))|^2 a\ud a\ud\theta \leq c_2
\]
for any $|\xi|\geq 1$.
\end{enumerate}
With the notations above, it is ready to define a family of general wave packets through scaling, modulation, and translation as follows, controlled by geometric parameters $s$ and $t$.
\begin{defn}
  \label{def:GC2D}
For $\frac{1}{2}<s\leq t<1$, define $\widehat{w_{a\theta b}}(\xi)=\widehat{w}(A^{-1}_a R^{-1}_\theta (\xi-a\cdot e_\theta))e^{-2\pi ib\cdot\xi}a^{-\frac{t+s}{2}}$ as a general wave packet in the Fourier domain. Equivalently, in the space domain, the corresponding general wave packet is
\begin{align*}
w_{a\theta b}(x)&=\int_{R^2}\widehat{w}(A^{-1}_a R^{-1}_\theta (\xi-a\cdot e_\theta)) e^{-2\pi ib\cdot \xi}e^{2\pi i\xi\cdot x}a^{-\frac{t+s}{2}}\ud\xi\nonumber\\
&=a^{\frac{t+s}{2}}\int_{R^2}\widehat{w}(y)e^{-2\pi ib\cdot(R_\theta A_a y+a\cdot e_\theta)}e^{2\pi ix\cdot(R_\theta A_a y+a\cdot e_\theta)}\ud y\nonumber\\
&=a^{\frac{t+s}{2}}e^{2\pi ia(x-b)\cdot e_\theta}w(A_a R^{-1}_\theta(x-b)).
\end{align*}
In such a way, a family of general wave packets $\{w_{a\theta b}(x),a\in [1,\infty),\theta\in[0,2\pi),b\in \RR^2\}$ is constructed. 
\end{defn}

Similar to classical time-frequency transforms, the general wave packet transform is defined to be the inner product of a given signal and each general wave packet as follows.
\begin{defn}
  \label{def:GCT}
The general wave packet transform of a function $f(x)$ is a function 
\begin{align*}
  W_f(a,\theta,b)&=\langle w_{a\theta b},f\rangle = \int_{\RR^2}\overline{w_{a\theta b}(x)}f(x)\ud x\\
  &=\langle\widehat{w_{a\theta
      b}},\widehat{f}\rangle=\int_{\RR^2}\overline{\widehat{w_{a\theta
        b}(\xi)}}\widehat{f}(\xi)\ud\xi
\end{align*}
for $a\in [1,\infty)$, $\theta\in[0,2\pi)$, $b\in \RR^2$.
\end{defn}

\begin{defn} 
  \label{def:LW}
  The local wave-vector estimation of a function $f(x)$ at
  $(a,\theta,b)$ is
  \begin{equation}
    v_f(a,\theta,b)=\frac{ \grad_b W_f(a,\theta,b) }{ 2\pi i W_f(a,\theta,b)} \label{eq:IWE}
  \end{equation}
  for $a\in [1,\infty)$, $\theta\in [0,2\pi)$, $b\in\RR^2$ such that $W_f(a,\theta,b)\not=0$.
\end{defn}

In \cites{YangYing:2013,YangYing:preprint}, it has been proved that $v_f(a,\theta,b)$ accurately estimates local wave-vectors independently of the amplitude functions $\alpha_k$ or the position $b$ from a finite superposition of wave-like components without boundaries. Hence, if the coefficients with the same $v_f$ are reallocated together, then the nonzero energy would be concentrating around local wave-vectors of $f(x)$. Mathematically speaking, the synchrosqueezed energy distribution is defined as follows.
\begin{defn}
  Given $f(x)$, $W_f(a,\theta,b)$, and $v_f(a,\theta,b)$, the synchrosqueezed energy
  distribution $T_f(v, b)$ is 
  \begin{equation}
    T_f(v,b) = \int |W_f(a,\theta,b)|^2 \delta(\Re v_f(a,\theta,b)-v) a\ud a\ud\theta \label{eq:SED}
  \end{equation}
  for $v\in \RR^2$, $b\in \RR^2$.
\end{defn} 

If the Fourier transform $\widehat{f}(\xi)$ vanishes for $|\xi|< 1$, one can check the following $L^2$-norms equivalence up to a uniform constant factor following the proof of \cite{CandesII}*{Theorem $1$}, i.e.,
\begin{equation}
\label{eqn:normeq}
\int|W_f(a,\theta,b)|^2 a\ud a\ud\theta \ud b = \Theta\left( \int |f(x)|^2 \ud x\right).
\end{equation}

Our goal is to apply and adapt the $2D$ synchrosqueezed transforms to analyze the $2D$ general intrinsic mode type functions in the image function \eqref{eqn:imagefunction}. This problem is similar to but different from the $1D$ general mode decomposition problems studied in \cites{Hau-Tieng:2013,Yang:preprint}, where $1D$ synchrosqueezed transforms are applied to estimate the instantaneous properties of $1D$ general intrinsic mode type functions. Generalizing the conclusions in \cites{Yang:preprint,YangYing:preprint}, the theorem below shows that the $2D$ synchrosqueezed transforms precisely estimate the local wave vectors of the wave-like components in \eqref{eqn:imagefunction} at the points away from boundaries.

\begin{theorem}
  \label{thm:main}
  For a $2D$ general intrinsic mode type function $f(x)$ of type $(M,N,F)$ with $\left|F\right|\geq 1$, any $\eps>0$ and any $r>1$, we define
  \begin{equation*}
  R_{\eps} = \left\{(a,\theta,b): |W_f(a,\theta,b)|\geq a^{-\frac{s+t}{2}}\sqrt \eps,\quad a\leq 2MNr \right\}
  \end{equation*}
  and 
  \[
  Z_{n} = \left\{(a,\theta,b):\left|A^{-1}_a R^{-1}_\theta\left(a\cdot e_\theta-N \nabla(n^{\TT} F  \phi(b)) \right)\right|\leq d,\quad a\leq 2MNr \right\}
  \]
For fixed $M$, $r$, $s$, $t$, $d$, and $\epsilon$, there exists $N_0(M,r,s,t,d,\eps)>0$ such that for any $N>N_0(M,r,s,t,d,\eps)$ and a $2D$ GIMT $f(x)$ of type $(M,N,F)$, the following statements
  hold.
  \begin{enumerate}[(i)]
  \item $\left\{Z_n: \widehat{S}(n)\neq 0 \right\}$ are disjoint and $R_{\eps}
    \subset \bigcup_{\widehat{S}(n)\neq 0} Z_{n}$;
  \item For any $(a,\theta,b) \in R_{\eps} \cap Z_{n}$, 
    \[
    \frac{\left|v_f(a,\theta,b)-N \nabla(n^{\TT} F  \phi(b))  \right|}{ \left|N \nabla(n^{\TT} F  \phi(b))  \right|}\lesssim\sqrt \eps.
    \]
  \end{enumerate}
\end{theorem}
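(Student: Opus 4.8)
The plan is to treat $W_f(a,\theta,b)$ as a superposition of wave packet transforms of the individual Fourier modes of $f$ and to show that, on the region of interest, each such transform behaves like that of a single plane wave whose frequency is the instantaneous wave vector. Writing $f=\sum_{n}f_n$ with $f_n(x)=\alpha(x)\widehat{S}(n)e^{2\pi i N n^{\TT} F\phi(x)}$ as in \eqref{eqn:imagefunction}, linearity gives $W_f=\sum_n W_{f_n}$. For each mode I would freeze the amplitude at $\alpha(b)$ and Taylor-expand the phase about $b$,
\[
N n^{\TT} F\phi(x)=N n^{\TT} F\phi(b)+v_n(b)\cdot(x-b)+O\!\left(N\abs{n^{\TT} F}M\abs{x-b}^2\right),\quad v_n(b):=N\nabla(n^{\TT} F\phi(b)),
\]
so that near $b$ the mode $f_n$ is a plane wave of frequency $v_n(b)$. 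Substituting $y=A_a R_\theta^{-1}(x-b)$ exactly as in Definition~\ref{def:GC2D} then produces the leading term
\[
W_{f_n}(a,\theta,b)\approx\alpha(b)\widehat{S}(n)e^{2\pi i N n^{\TT} F\phi(b)}a^{-\frac{t+s}{2}}\,\widehat{w}\!\left(A_a^{-1}R_\theta^{-1}\bigl(v_n(b)-a\cdot e_\theta\bigr)\right),
\]
which, since $\widehat{w}$ is supported in $B_d(0)$, is nonzero in $(a,\theta)$ exactly on $Z_n$.

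The technical core, and the step I expect to be the main obstacle, is a uniform bound on the remainder $\abs{W_{f_n}-(\text{leading term})}$. I would estimate it over the effective spatial support of $w_{a\theta b}$, the ellipse about $b$ with semi-axes $a^{-t}$ and $a^{-s}$, controlling two contributions: the amplitude variation $\abs{\alpha(x)-\alpha(b)}\lesssim Ma^{-s}$, and the quadratic phase remainder, which the Hessian bound of Definition~\ref{def:GIMTF} makes of order $N\abs{n^{\TT} F}Ma^{-2s}$. Because $a\sim\abs{v_n(b)}\sim N\abs{n^{\TT} F}$ on $Z_n$, the latter scales like $\bigl(N\abs{n^{\TT} F}\bigr)^{1-2s}$, which tends to zero precisely because $s>\tfrac12$; this is exactly where that hypothesis enters. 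The Schwartz decay of $w$ renders these pointwise estimates integrable, and $\sum_n\abs{\widehat{S}(n)}\leq M$ from Definition~\ref{def:GSF} controls the sum over modes; one also notes that the cutoff $a\leq 2MNr$ together with $a\sim\abs{v_n(b)}\geq N\abs{n^{\TT} F}/M$ leaves only finitely many relevant $n$ (those with $\abs{n^{\TT} F}\lesssim M^2 r$). The outcome is a bound $\abs{W_{f_n}-(\text{leading term})}\lesssim a^{-\frac{t+s}{2}}\eta(N)$ with $\eta(N)\to 0$, and $N_0$ is then chosen so that $\eta(N)\leq\eps$ for $N>N_0$.

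For part (i), disjointness of $\{Z_n:\widehat{S}(n)\neq 0\}$ I would obtain by comparing frequency separation to window size: if $(a,\theta,b)\in Z_n\cap Z_{n'}$ then $v_n(b)$ and $v_{n'}(b)$ both lie within the Fourier ellipse of semi-axes $a^t d$ and $a^s d$ about $a\cdot e_\theta$, forcing $\abs{v_n(b)-v_{n'}(b)}\lesssim a^t d$; but $v_n(b)-v_{n'}(b)=N(n-n')^{\TT} F\nabla\phi(b)$ has norm $\gtrsim N/M$ (up to a constant depending on $F$) by the lower bound of Definition~\ref{def:GIMTF} and the invertibility of $F$, so for $N>N_0$ the window $\sim a^t\lesssim N^t$ with $t<1$ is too small to contain two distinct frequencies. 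The containment $R_\eps\subset\bigcup_{\widehat{S}(n)\neq 0}Z_n$ is the contrapositive of the remainder bound: off $\bigcup Z_n$ every leading term vanishes, whence $\abs{W_f}\lesssim a^{-\frac{t+s}{2}}\eta(N)<a^{-\frac{t+s}{2}}\sqrt\eps$ once $\eta(N)<\eps$, contradicting membership in $R_\eps$.

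For part (ii), on $R_\eps\cap Z_n$ only the $n$-th leading term survives, and differentiating it in $b$ shows that the dominant $b$-dependence is the phase factor $e^{2\pi i N n^{\TT} F\phi(b)}$, giving $\nabla_b W_{f_n}\approx 2\pi i\,v_n(b)\,W_{f_n}$ and hence $v_f=v_n(b)$ for the frozen plane wave. Writing $W_f=W_{f_n}+E$ with $\abs{E}$ and $\abs{\nabla_b E}/\abs{v_n(b)}$ both $\lesssim a^{-\frac{t+s}{2}}\eta(N)$ on $R_\eps\cap Z_n$, and using the defining lower bound $\abs{W_f}\geq a^{-\frac{t+s}{2}}\sqrt\eps$, the numerator of $v_f-v_n(b)=(\nabla_b W_f-2\pi i v_n(b)W_f)/(2\pi i W_f)$ is of size $\lesssim\abs{v_n(b)}a^{-\frac{t+s}{2}}\eta(N)$, so that
\[
\frac{\abs{v_f(a,\theta,b)-v_n(b)}}{\abs{v_n(b)}}\lesssim\frac{\eta(N)}{\sqrt\eps}\lesssim\sqrt\eps,
\]
the last inequality holding once $N_0$ is enlarged so that $\eta(N)\leq\eps$. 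This gives the claimed relative estimate with $v_n(b)=N\nabla(n^{\TT} F\phi(b))$.
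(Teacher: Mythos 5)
Your proposal is correct and takes essentially the same route as the paper: the paper likewise decomposes $W_f=\sum_n W_{f_n}$ with $f_n(x)=\widehat{S}(n)\alpha(x)e^{2\pi i N n^{\TT} F\phi(x)}$, obtains your per-mode leading term $f_n(b)a^{-\frac{s+t}{2}}\widehat{w}\left(A_a^{-1}R_\theta^{-1}\left(a\cdot e_\theta - N\abs{n^{\TT}F}\grad\wt{\phi}_n(b)\right)\right)$ plus $\Or(\eps)$ remainder by invoking Lemma~\ref{lem:A} and Lemma~\ref{lem:B} (quoted from prior work, whose proofs are precisely your frozen-amplitude/Taylor-expanded-phase estimate), controls the mode sum via $\sum_n\abs{\widehat{S}(n)}\le M$, proves (i) by the same comparison of the frequency separation $N\abs{(n-\wt{n})^{\TT}F\grad\phi(b)}\ge N/M$ against the packet support size $2d(2MNr)^t$ yielding the same threshold $N\ge(2^{1+t}M^{1+t}dr^t)^{\frac{1}{1-t}}$, and proves (ii) by the same quotient bound $\abs{\Or(\eps)/(g+\Or(\eps))}\lesssim\sqrt\eps$ using $\abs{g}\gtrsim\sqrt\eps$ on $R_\eps$. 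The only difference is that you sketch direct proofs of the two cited lemmas rather than quoting them, which is a matter of presentation, not of approach.
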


For simplicity, the notations $\Or(\cdot)$, $\lesssim$ and $\gtrsim$ are used when the implicit constants may only depend on $M$, $s$, $t$, $d$ and $K$. The proof of the theorem is similar to those theorems in \cites{YangYing:preprint,Yang:preprint} and relies on two lemmas in \cite{YangYing:preprint}. Although the parameter $d$ is set to be $1$ in \cite{YangYing:preprint}, it is easy to extend these two lemmas for a general value of $d$. We state the generalizations here, leaving the proofs to the reader.

\begin{lemma}
  \label{lem:A}
Suppose $f(x)=\sum_{k=1}^K f_k(x)=\sum_{k=1}^{K}e^{-(\phi_k(x)-c_k)^2/\sigma_k^2}\alpha_k(x)e^{2\pi iN \phi_k(x)}$ is a well-separated superposition of type $(M,N,K)$ (refer to \cite{YangYing:preprint}). Set \[\Omega=\left\{ (a,\theta):a\in\left(\frac{N}{2M},2MN\right),\exists k\ s.t.\ \left|\theta_{\grad\phi_k(b)}-\theta\right|<\theta_0\right\},\] where $\theta_0=\arcsin((\frac{M}{N})^{t-s})$. For any $\eps>0$, there exists $N_0(M,s,t, d, \eps)$ such that the following estimation of $W_f(a,\theta,b)$ holds for any $N>N_0$.
 \begin{enumerate}[(1)]
  \item If $(a,\theta)\in \Omega$,\[W_f(a,\theta,b)=a^{-\frac{s+t}{2}} \left( \sum_{k:\ |\theta_{\grad\phi_k(b)}-\theta|<\theta_0} f_k(b)\widehat{w}\left(A^{-1}_a R^{-1}_\theta \left(a\cdot e_\theta-N\grad\phi_k(b)\right)\right)+\Or(\eps) \right);\]
  \item Otherwise,\[W_f(a,\theta,b)=a^{-\frac{s+t}{2}}\Or(\eps).\]
  \end{enumerate}
\end{lemma}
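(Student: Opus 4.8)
The plan is to treat each component separately by linearity, $W_f(a,\theta,b)=\sum_{k=1}^K W_{f_k}(a,\theta,b)$, and to extract the leading behaviour of $W_{f_k}$ by a change of variables followed by a Taylor expansion of the phase. Writing $f_k(x)=g_k(x)e^{2\pi iN\phi_k(x)}$ with the slowly varying amplitude $g_k(x)=e^{-(\phi_k(x)-c_k)^2/\sigma_k^2}\alpha_k(x)$ and inserting the explicit wave packet $w_{a\theta b}(x)=a^{\frac{t+s}{2}}e^{2\pi ia(x-b)\cdot e_\theta}w(A_aR_\theta^{-1}(x-b))$ from Definition \ref{def:GC2D}, I would substitute $u=A_aR_\theta^{-1}(x-b)$. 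Using $\det A_a=a^{s+t}$ and $R_\theta^{-1}e_\theta=(1,0)^{\TT}$, the Jacobian and the modulation combine into the prefactor $a^{-\frac{s+t}{2}}$, turning $W_{f_k}$ into
\[
W_{f_k}(a,\theta,b)=a^{-\frac{s+t}{2}}e^{2\pi iN\phi_k(b)}\int\overline{w(u)}\,g_k(b+R_\theta A_a^{-1}u)\,e^{2\pi i\left[N(\phi_k(b+R_\theta A_a^{-1}u)-\phi_k(b))-a^{1-t}u_1\right]}\ud u.
\]

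Taylor expanding the phase about $b$, the linear part together with the modulation $-a^{1-t}u_1$ assembles exactly into $-2\pi\,\eta\cdot u$ with $\eta=A_a^{-1}R_\theta^{-1}(a\cdot e_\theta-N\grad\phi_k(b))$, leaving the second-order Taylor remainder $R_2(u)=\phi_k(b+R_\theta A_a^{-1}u)-\phi_k(b)-\grad\phi_k(b)\cdot(R_\theta A_a^{-1}u)$ in the phase. Freezing $g_k$ at $b$ and dropping $R_2$ gives the leading term $a^{-\frac{s+t}{2}}f_k(b)\int\overline{w(u)}e^{-2\pi i\eta\cdot u}\ud u=a^{-\frac{s+t}{2}}f_k(b)\widehat{w}(\eta)$, using that $\widehat{w}$ is real and radial. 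Two errors remain. The amplitude error $g_k(b+R_\theta A_a^{-1}u)-g_k(b)$ is $\Or(a^{-s})$ over the effective support of $w$, since $\abs{R_\theta A_a^{-1}u}\lesssim a^{-s}\abs{u}$ and $\abs{\grad g_k}\lesssim M$; with $a>N/(2M)$ this is $\Or(N^{-s})$. The quadratic phase remainder obeys $N\cdot\Or(a^{-2s}\abs{u}^2M)=\Or(N^{1-2s})$ on the range $a\in(N/2M,2MN)$, which is precisely where the hypothesis $s>\tfrac12$ enters: it forces $e^{2\pi iNR_2(u)}=1+\Or(N^{1-2s})$. Taking $N_0$ large makes both errors below $\eps$.

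For the localization I would exploit that $\widehat{w}$ is supported in $B_d(0)$, so the leading term survives only when $\abs{\eta}\le d$, i.e. on $Z_n$; reading off $\eta_2=-a^{-s}N\abs{\grad\phi_k(b)}\sin(\theta_{\grad\phi_k(b)}-\theta)$ and $\eta_1=a^{-t}(a-N\abs{\grad\phi_k(b)}\cos(\theta_{\grad\phi_k(b)}-\theta))$ confines this to $(a,\theta)\in\Omega$ and to the indices $k$ aligned with $\theta$, giving the sum in part (1). The cut-off $\theta_0=\arcsin((M/N)^{t-s})$ is calibrated exactly so that a non-aligned index has $\abs{\eta_2}\gtrsim N^{1-t}\to\infty$ (here $t<1$ is used), and likewise $a\notin(N/2M,2MN)$ forces $\abs{\eta_1}\to\infty$; in every such case the integral is the Fourier transform at $\eta$ of the Schwartz function $\overline{w(u)}g_k(b+R_\theta A_a^{-1}u)e^{2\pi iNR_2(u)}$, whose seminorms are uniformly bounded for $N>N_0$, so it decays faster than any power of $\abs{\eta}$ and is $\Or(\eps)$. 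This handles the dropped indices in (1) and all indices in (2). Summing the $K$ single-component estimates and rescaling $\eps$ finishes the proof.

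The main obstacle I anticipate is the off-diagonal estimate: one must confirm that the quadratic phase $NR_2(u)$, although small, does not spoil the rapid decay in $\abs{\eta}$ — equivalently, that all Schwartz seminorms of the above integrand are controlled uniformly in $(a,\theta,b)$ by constants depending only on $M,s,t$, so that the threshold $N_0(M,s,t,d,\eps)$ can be made explicit by balancing the $\Or(N^{-s})$ amplitude error, the $\Or(N^{1-2s})$ phase error, and the power-law decay simultaneously over all admissible $a$. Finally, since $d$ enters only through the support $B_d(0)$ of $\widehat{w}$ (hence through the constraint $\abs{\eta}\le d$ defining $Z_n$ and the constants in $\theta_0$), the passage from the case $d=1$ in \cite{YangYing:preprint} to general $d$ leaves the structure of the argument unchanged.
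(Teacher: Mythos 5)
The paper itself contains no proof of this lemma: it is stated as the general-$d$ version of two lemmas from \cite{YangYing:preprint}, with the proofs explicitly ``left to the reader.'' So the relevant comparison is with the standard proof in that reference, and your reconstruction matches it in all essentials: linearity over $k$, the substitution $u=A_aR_\theta^{-1}(x-b)$ producing the prefactor $a^{-\frac{s+t}{2}}$, first-order Taylor expansion of phase and amplitude yielding the main term $f_k(b)\widehat{w}(\eta)$ with $\eta=A_a^{-1}R_\theta^{-1}(a\cdot e_\theta-N\grad\phi_k(b))$, the error budget $\Or(N^{-s})+\Or(N^{1-2s})$ on $a\in(N/2M,2MN)$ (this is exactly where $s>\tfrac12$ enters), and the calibration of $\theta_0=\arcsin((M/N)^{t-s})$ so that misaligned indices have $\abs{\eta_2}\gtrsim N^{1-t}$.

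One step would fail as literally written, and it is the very one you flagged as your anticipated obstacle: for $(a,\theta)\notin\Omega$ with $a\ll N$ the factor $e^{2\pi iNR_2(u)}$ does \emph{not} have Schwartz seminorms bounded uniformly in $N$ --- already $\grad_u(NR_2)=\Or(NMa^{-2s}\abs{u})$ diverges for fixed small $a$ --- so the off-region contribution cannot be dismissed as ``the Fourier transform at $\eta$ of a uniformly controlled Schwartz function.'' The standard repair is non-stationary phase applied to the \emph{full} phase $\Phi(u)=N\bigl(\phi_k(b+R_\theta A_a^{-1}u)-\phi_k(b)\bigr)-a^{1-t}u_1$: one has the identity $\grad_u\Phi=A_a^{-1}R_\theta^{-1}\bigl(N\grad\phi_k(b+R_\theta A_a^{-1}u)-a\cdot e_\theta\bigr)$, hence $\abs{\grad_u\Phi}\geq a^{-t}\abs{a\cdot e_\theta-N\grad\phi_k(b+R_\theta A_a^{-1}u)}$ uniformly in $u$, while $\grad^2_u\Phi=\Or(NMa^{-2s})$; the ratio $\abs{\grad^2_u\Phi}/\abs{\grad_u\Phi}^2$ is then $\Or(N^{2(t-s)-1})$ in each off-region regime ($a\leq N/(2M)$, $a\geq 2MN$, and misaligned $\theta$ with $a\in(N/2M,2MN)$), which vanishes as $N\to\infty$ since $t-s<\tfrac12$, so repeated integration by parts gives decay in $\abs{\eta}$ with constants uniform over $(a,\theta,b)$ and hence the $\Or(\eps)$ bound for $N>N_0$. (For misaligned $\theta$ one also checks, using $\abs{\grad^2\phi_k}\leq M$ and $t<2s$, that the misalignment persists over the displacements $\abs{R_\theta A_a^{-1}u}=\Or(a^{-s}\abs{u})$.) A minor slip in the same passage: for $a\leq N/(2M)$ it is not $\abs{\eta_1}$ that is forced large --- it can be $\Or(1)$ when $\cos(\theta_{\grad\phi_k(b)}-\theta)\approx 0$ --- but rather $\abs{\eta}\geq a^{-t}(N/M-a)\gtrsim (N/(2M))^{1-t}$, through $\eta_2$ in that case. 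With these two repairs your argument is precisely the one the paper delegates to \cite{YangYing:preprint}.
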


\begin{lemma}
  \label{lem:B}
  Suppose $f(x)=\sum_{k=1}^K f_k(x)=\sum_{k=1}^{K}e^{-(\phi_k(x)-c_k)^2/\sigma_k^2}\alpha_k(x)e^{2\pi iN \phi_k(x)}$ is a well-separated superposition of type $(M,N,K)$. For any $\eps>0$, there exists $N_0(M,s,t, d, \eps)$ such that the following estimation of $\grad_b W_f(a,\theta,b)$ holds for any $N>N_0$.
  \begin{eqnarray*}
    \grad_b W_f(a,\theta,b)=a^{-\frac{s+t}{2}}\left(2\pi i N \sum_{k:\ |\theta_{\grad\phi_k(b)}-\theta|<\theta_0}\grad\phi_k(b)f_k(b)\widehat{w}\left(A^{-1}_a R^{-1}_\theta(a\cdot e_\theta-N\grad\phi(b))\right) +\Or(\eps)\right),
  \end{eqnarray*}
when \[(a,\theta)\in\Omega=\left\{ (a,\theta):a\in\left(\frac{N}{2M},2MN\right),\exists k\ s.t.\ \left|\theta_{\grad\phi_k(b)}-\theta\right|<\theta_0\right\}.\]
\end{lemma}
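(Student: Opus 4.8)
The plan is to reduce Lemma~\ref{lem:B} to the already-established Lemma~\ref{lem:A} by recognizing that $\grad_b W_f$ is again a general wave packet transform, only with a modified symbol carrying one extra polynomial factor. First I would pass to the Fourier domain. Since $\widehat{w}$ is real-valued, Definition~\ref{def:GCT} gives
\begin{equation*}
W_f(a,\theta,b) = a^{-\frac{t+s}{2}}\int_{\RR^2}\widehat{w}\!\left(A^{-1}_a R^{-1}_\theta(\xi-a\cdot e_\theta)\right)e^{2\pi ib\cdot\xi}\,\widehat{f}(\xi)\ud\xi .
\end{equation*}
The integrand decays rapidly in $\xi$ because $\widehat{w}$ is Schwartz-class and $\widehat{f}$ is a finite sum of Fourier transforms of Schwartz functions, so differentiation under the integral sign is legitimate and simply brings down a factor $2\pi i\xi$:
\begin{equation*}
\grad_b W_f(a,\theta,b) = a^{-\frac{t+s}{2}}\int_{\RR^2}(2\pi i\xi)\,\widehat{w}\!\left(A^{-1}_a R^{-1}_\theta(\xi-a\cdot e_\theta)\right)e^{2\pi ib\cdot\xi}\,\widehat{f}(\xi)\ud\xi .
\end{equation*}
This is structurally identical to the expression analysed in Lemma~\ref{lem:A}, with the scalar mother wave packet $\widehat{w}(\cdot)$ replaced by the componentwise vector-valued symbol $\xi\mapsto(2\pi i\xi)\widehat{w}(\cdot)$, which is still Schwartz-class and supported on the same anisotropic ball.

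Next I would split $\widehat{f}=\sum_{k=1}^{K}\widehat{f_k}$ and exploit the localization already built into Lemma~\ref{lem:A}. On the support of $\widehat{w}(A^{-1}_a R^{-1}_\theta(\xi-a\cdot e_\theta))$ one has $\abs{\xi-a\cdot e_\theta}\leq d\,a^{t}$, and for the indices $k$ with $\abs{\theta_{\grad\phi_k(b)}-\theta}<\theta_0$ that actually contribute, the window center $a\cdot e_\theta$ lies within $\Or(a^{t})$ of $N\grad\phi_k(b)$. Inside the $k$th term I would therefore write
\begin{equation*}
2\pi i\xi = 2\pi iN\grad\phi_k(b) + 2\pi i\bigl(\xi-N\grad\phi_k(b)\bigr),
\end{equation*}
so that the first piece is a constant which factors out of the integral. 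Applying Lemma~\ref{lem:A} to the $k$th component then reproduces exactly the claimed leading term $2\pi iN\grad\phi_k(b)\,f_k(b)\,\widehat{w}(A^{-1}_a R^{-1}_\theta(a\cdot e_\theta-N\grad\phi_k(b)))$, summed over the relevant $k$; the terms with $\abs{\theta_{\grad\phi_k(b)}-\theta}\geq\theta_0$ are $\Or(\eps)$ by well-separation, precisely as in part~(2) of Lemma~\ref{lem:A}.

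What remains is to control the remainder generated by the factor $2\pi i(\xi-N\grad\phi_k(b))$, and this is where the work lies. On the wave packet support this factor is of size $\Or(a^{t})$, so a crude absolute bound does not suffice; instead I would repeat the stationary-phase / Taylor-expansion estimate underlying Lemma~\ref{lem:A}, now carried one order further to absorb the extra linear factor. Concretely one expands the phase $2\pi N\phi_k$ of $\widehat{f_k}$ to second order about $b$, uses the type-$(M,N,K)$ smoothness bounds $\abs{\grad\phi_k}\leq M$ and $\abs{\grad^2\phi_k}\leq M$ to control curvature, and integrates the resulting oscillatory integral against the anisotropic window whose dimensions are $a^{t}$ and $a^{s}$ with $\tfrac12<s\leq t<1$. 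The factor $(\xi-N\grad\phi_k(b))$ vanishes at the window center and pairs with these sub-linear widths and the residual oscillation to yield a contribution that tends to $0$ as $N\to\infty$; choosing $N>N_0(M,s,t,d,\eps)$ renders it $\Or(\eps)$, uniformly in $(a,\theta,b)$.

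The main obstacle, then, is exactly this remainder estimate. Taking the gradient has inflated the leading coefficient by a factor $N$, so one must show that the error which was $\Or(\eps)$ in Lemma~\ref{lem:A} stays $\Or(\eps)$ rather than being amplified to $\Or(N\eps)$. The saving is that the inserted factor $(\xi-N\grad\phi_k(b))$ is controlled by the window widths $a^{t},a^{s}$, so the oscillatory cancellation that proves Lemma~\ref{lem:A} gains precisely the order needed to offset the extra $N$. Verifying that this gain outpaces the $N$ in the leading term, uniformly over the admissible range $a\in(N/2M,2MN)$ and over all contributing components $k$, is the one genuinely delicate point; everything else is a repetition of the bookkeeping already carried out in \cite{YangYing:preprint} for the case $d=1$.
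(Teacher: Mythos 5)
You are working against a statement whose proof the paper does not actually give: Lemma~\ref{lem:B} is presented as an easy extension (from $d=1$ to general $d$) of a lemma proved in \cite{YangYing:preprint}, with the proof explicitly left to the reader. Your skeleton --- differentiate the Fourier-side formula for $W_f$ to bring down $2\pi i\xi$, split $\xi = N\grad\phi_k(b) + (\xi - N\grad\phi_k(b))$, and rerun the Lemma~\ref{lem:A} machinery on each piece --- is indeed the route of the cited proof. But there is a genuine gap at exactly the point you flag as delicate: your claim that the remainder can be held at $\Or(\eps)$, via an extra oscillatory gain that offsets the factor $N$, is false. No such gain exists.

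Two concrete obstructions. First, even before touching the deviation term, the piece you factor out is $2\pi i N\grad\phi_k(b)\,W_{f_k}$, and Lemma~\ref{lem:A} controls $W_{f_k}$ only up to $a^{-\frac{s+t}{2}}\Or(\eps)$; multiplying by $2\pi iN\grad\phi_k(b)$ already produces an error $N\Or(\eps)$, and improving this would require Lemma~\ref{lem:A} with error $\Or(\eps/N)$, which is false (the phase-curvature error alone is of order $N^{1-2s}\gg N^{-1}$ for $s<1$). Second, the deviation term is not even $o(1)$: take $K=1$, $\sigma=\infty$, $\phi(x)=x_1$, so $f=\alpha(x)e^{2\pi iNx_1}$; integration by parts gives $\grad_b W_f = 2\pi iNe_1\,W_f + \langle w_{a\theta b},(\grad\alpha)e^{2\pi iNx_1}\rangle$, and the second term equals $a^{-\frac{s+t}{2}}\bigl(\grad\alpha(b)\,\widehat{w}\bigl(A_a^{-1}R_\theta^{-1}(a\cdot e_\theta - Ne_1)\bigr)+o(1)\bigr)$, of fixed size comparable to $\abs{\grad\alpha(b)}\leq M$, independent of $N$. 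In your notation, the first moment $\int \eta\,\widehat{\alpha}(\eta)e^{2\pi ib\cdot\eta}\ud\eta = \grad\alpha(b)/(2\pi i)$ does not vanish as $N\to\infty$: the factor $\xi - N\grad\phi_k(b)$ is paired against the spread of $\widehat{f_k}$ around its center, and that spread encodes precisely $\grad\alpha_k$ and the curvature of $\phi_k$, not additional oscillation. The correct output of your own decomposition is therefore $\grad_b W_f = a^{-\frac{s+t}{2}}\bigl(2\pi iN\sum_k \grad\phi_k(b)f_k(b)\widehat{w}(\cdot) + N\Or(\eps)\bigr)$, i.e.\ the $\Or(\eps)$ in the printed lemma should be read as carrying the factor $N$ (equivalently, factor $2\pi iN$ out of the parenthesis). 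This weaker error is all that the proof of Theorem~\ref{thm:main} uses: there $\grad_b W_f$ is divided by $2\pi iW_f$ with $\abs{g}\gtrsim\sqrt{\eps}$ and then by $\abs{N\nabla(n^{\TT}F\phi(b))}\sim N$, so the $\sqrt{\eps}$ accuracy of $v_f$ survives through that division --- not through the cancellation you posit. (A minor additional point: with $\sigma_k=\infty$ the $f_k$ are not Schwartz, so $\widehat{f}$ is only a tempered distribution and your differentiation under the integral needs the compact support of $\widehat{w}$, or the space-side representation, to be justified.)
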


In the scope of this paper, we have $\sigma_k = \infty$ for all $k$. Now, we are ready to prove Theorem \ref{thm:main}.
\begin{proof}[Proof of Theorem~\ref{thm:main}]
By the uniform convergence of the $2D$ Fourier series of general shape functions, we have
\begin{equation*}
W_f(a,\theta,b)= \sum_{n\in \ZZ^2}W_{f_{n}}(a,\theta,b),
  \end{equation*}
where $f_{n}(x)=\widehat{S}(n)\alpha(x)e^{2\pi iN n^{\TT} F \phi(x)}$. 
Introduce the short hand notation,  $\wt{\phi}_{n}(x)=n^{\TT} F \phi(x) / \abs{n^{\TT} F}$, then 
\begin{equation*}
  f_{n}(x)=\widehat{S}(n)\alpha(x)e^{2\pi iN \abs{n^{\TT}F} \wt{\phi}_{n}(x)}.
\end{equation*}
By the property of $2D$ general intrinsic mode functions, $f_{n}(x)$ is a well-separated superposition of type $(M,N\abs{n^{\TT}F},1)$ defined in \cite{YangYing:preprint}.

For each $n$, we estimate $W_{f_{n}}(a,\theta,b)$. By Lemma \ref{lem:A}, there exists a uniform $N_1(M,s,t,d,\eps)$ independent of $n$ such that, if $N\abs{n^{\TT}F}>N_1$, 
\begin{equation*}
W_{f_{n}}(a,\theta,b)=a^{-\frac{s+t}{2}} \left( f_{n}(b)\widehat{w}\left(A^{-1}_a R^{-1}_\theta \left(a\cdot e_\theta-N \abs{n^{\TT} F} \grad\wt{\phi}_{n}(b)\right)\right)+\left| \widehat{S}(n)\right|\Or(\eps) \right)
\end{equation*}
for $(a,\theta)\in \Omega_{n}$, and
\begin{equation*}
W_{f_{n}}(a,\theta,b)=a^{-\frac{s+t}{2}}\left| \widehat{S}(n)\right|\Or(\eps)
\end{equation*}
for $(a,\theta)\not\in \Omega_{n}$. Here 
\[
\Omega_{n}=\left\{ (a,\theta):a\in\left(\frac{N\abs{n^{\TT}F}}{2M},2MN\abs{n^{\TT}F}\right),\left|\theta_{\grad\wt{\phi}_{n}(b)}-\theta\right|<\theta_0\right\},
\]
and $\theta_0=\arcsin((\frac{M}{N\abs{n^{\TT}F}})^{t-s})$. Let $\Gamma_{a\theta}=\{n \in \ZZ^2 :(a,\theta)\in\Omega_{n}\}$, then
\begin{align*}
W_{f}(a,\theta,b)&=a^{-\frac{s+t}{2}} \left(\sum_{n\in \Gamma_{a\theta}} f_{n}(b)\widehat{w}\left(A^{-1}_a R^{-1}_\theta \left(a\cdot e_\theta-N\abs{n^{\TT}F}\grad\wt{\phi}_{n}(b)\right)\right)+\sum_{n}\left| \widehat{S}(n)\right|\Or(\eps) \right)\\
&=a^{-\frac{s+t}{2}} \left(\sum_{n \in \Gamma_{a\theta}} f_{n}(b)\widehat{w}\left(A^{-1}_a R^{-1}_\theta \left(a\cdot e_\theta-N\abs{n^{\TT}F}\grad\wt{\phi}_{n}(b)\right)\right)+\Or(\eps) \right).
\end{align*}

Notice that for any $n \neq \wt{n}$, the distance between the local wave vectors $N\abs{n^{\TT} F}\grad\wt{\phi}_{n}(b)$ and $N\abs{\wt{n}^{\TT} F}\grad\wt{\phi}_{\wt{n}}(b)$ are bounded below. In fact,
\begin{align*}
\left|N \abs{n^{\TT} F} \grad\wt{\phi}_{n}(b)-N\abs{\wt{n}^{\TT} F}\grad\wt{\phi}_{\wt{n}}(b)\right|
&=N\left|(n -\wt{n })^{\TT} F \grad\phi(b) \right|\\
&\geq\frac{N}{M}\abs{(n - \wt{n})^{\TT} F}
\geq\frac{N}{M}.
\end{align*}
The first inequality above is due to the definition of $2D$ general intrinsic mode type function. Observe that the support of a general wave packet centered at $(a\cos(\theta),a\sin(\theta))$ is within a disk with a radius of length $da^t$. Because the range of $a$ of interest is $a\leq 2MNr$, the general wave packets of interest have supports of size at most $2d(2MNr)^t$. Hence, if $\frac{N}{M}\geq 2d(2MNr)^t$, which is equivalent to $N\geq (2^{1+t}M^{1+t}dr^t)^{\frac{1}{1-t}}$, then for each $(a,\theta,b)$ of interest, there is at most one $n \in \ZZ^2$ such that
\[
\left|A^{-1}_a R^{-1}_\theta\left(a\cdot e_\theta-N \nabla(n^{\TT} F  \phi(b))  \right)\right|\leq d.
\]
This implies that $\{Z_{n}\}$ are disjoint sets. Notice that $\widehat{w}(x)$ decays when $|x|\geq d$. The above statement also indicates that there is at most one $n\in\Gamma_{a\theta}$ such that 
\[f_{n}(b)\widehat{w}\left(A^{-1}_a R^{-1}_\theta \left(a\cdot e_\theta-N \abs{n^{\TT} F} \grad\wt{\phi}_{n}(b)\right)\right)\neq 0.\]
Hence, if $(a,\theta,b)\in R_\eps$, there must be some $n$ such that $\widehat{S}(n)\neq 0$ and
\begin{eqnarray}
\label{eqn:Wf}
W_{f}(a,\theta,b)=a^{-\frac{s+t}{2}} \left(f_{n}(b)\widehat{w}\left(A^{-1}_a R^{-1}_\theta \left(a\cdot e_\theta-N\abs{n^{\TT} F}\grad\wt{\phi}_{n}(b)\right)\right)+\Or(\eps) \right).
\end{eqnarray}
By the definition of $Z_{n}$, we see $(a,\theta,b)\in Z_{n}$. So, $R_{\eps} \subset \bigcup_{\widehat{S}(n)\neq 0} Z_{n}$ and $(1)$ is proved.

Now, we estimate $\grad_b W_f(a,\theta,b)$. Suppose $(a,\theta,b)\in R_{\eps} \cap Z_{n}$. Similarly to the estimate of $W_f(a,\theta,b)$, by Lemma \ref{lem:B}, there exits $N_2(M,s,t,d,\eps)$ such that if $N>N_2$ then
 \begin{align*}
   \grad_b & W_f(a,\theta,b)\\
 &= a^{-\frac{s+t}{2}}\left(2\pi i N \sum_{n\in\Gamma_{a\theta}}
\abs{n^{\TT} F}\grad\wt{\phi}_{n}(b)f_{n}(b)\widehat{w}\left(A^{-1}_a R^{-1}_\theta(a\cdot e_\theta-N\abs{n^{\TT} F} \grad\wt{\phi}_{n}(b))\right) +\Or(\eps)\right)\\
&= a^{-\frac{s+t}{2}}\left(2\pi i N \abs{n^{\TT} F}\grad\wt{\phi}_{n}(b)f_{n}(b)\widehat{w}\left(A^{-1}_a R^{-1}_\theta(a\cdot e_\theta-N\abs{n^{\TT}F}\grad\wt{\phi}_{n}(b))\right) +\Or(\eps)\right)
  \end{align*}
for the same $n$ in \eqref{eqn:Wf}.

Let $g=f_{n}(b)\widehat{w}\left(A^{-1}_a R^{-1}_\theta(a\cdot e_\theta-N\abs{n^{\TT}F}\grad\wt{\phi}_{n}(b))\right)$, then
\[
v_f(a,\theta,b)=\frac{N\abs{n^{\TT} F}\grad\wt{\phi}_{n}(b)g+\Or(\eps)}{g+\Or(\eps)}.
\]
Since $|W_f(a,\theta,b)|\geq a^{-\frac{s+t}{2}}\sqrt{\eps}$ for $(a,\theta,b)\in R_{\eps}$, then $|g|\gtrsim\sqrt{\eps}$. So 
\begin{align*}
\frac{\left|v_f(a,\theta,b)-N \nabla(n^{\TT} F  \phi(b)) \right|}{ \left|
N \nabla(n^{\TT} F  \phi(b)) \right|}
&=\frac{\left|v_f(a,\theta,b)-N\abs{n^{\TT} F} \grad\wt{\phi}_{n}(b)\right|}{\left|N\abs{n^{\TT} F}\grad\wt{\phi}_{n}(b)\right|}\\
&\lesssim \left|\frac{\Or(\eps)}{g+\Or(\eps)}\right|\\
&\lesssim\sqrt{\eps}.
\end{align*}
The above estimate holds for $N>N_0 = \max\{N_1,N_2, (2^{1+t}M^{1+t}dr^t)^{\frac{1}{1-t}}\}$. The proof is complete.
\end{proof}

Theorem \ref{thm:main} indicates that the underlying wave-like components $\widehat{S}(n)\alpha(x)e^{2\pi iN n^{\TT} F \phi(x)}$ of the $2D$ general intrinsic mode type function $f(x)=\alpha(x)S(2\pi N F \phi(x))$ are well-separated, if they are within a scale $a\leq 2MNr$ and $N$ is sufficiently large. By the definition of the synchrosqueezed energy distribution, $T_f(a,\theta,b)$ would concentrate around their local wave vectors $N \nabla(n^{\TT} F  \phi(b)) $. The energy around each local wave vectors reflects the energy of the corresponding wave-like component by the norm equivalence \eqref{eqn:normeq}.

In the crystal image analysis, grains would have local defects, irregular boundaries and smooth trends caused by reflection etc. 
First, it is important to know where we can keep the local wave vector estimates accurate. Suppose a $2D$ general intrinsic mode type function $\widehat{S}(n)\alpha(x)e^{2\pi iN n^{\TT} F\phi(x)}$ is defined in a domain $\Omega$ with a boundary $\partial\Omega$. The smallest scale used to estimate local wave vectors is of order $\frac{N}{M}$. Hence, the general wave packets used have supports of size at most $\frac{M^t}{dN^t}$ by $\frac{M^t}{dN^t}$. Let us denote a perfect interior of $\Omega$ as
\[
\wt{\Omega}=\left\{x\in\Omega:\left|x-y\right|>\frac{M^t}{dN^t},\forall y\in\partial\Omega \right\},
\]
then the estimates of local wave vectors remain accurate in $\wt{\Omega}$. As the numerical results in \cites{DaubechiesLuWu:11,YangYing:2013,YangYing:preprint,Yang:preprint} show, synchrosqueezed transforms can still approximately recover instantaneous or local properties near $\partial\Omega$.

The second concern is the influence of the smooth trends $c_k(x)$ in \eqref{eqn:crystal}. By the method of stationary phase, the Fourier transform $\widehat{c_k}(\xi)$ would decay quickly as $|\xi|$ increases. Hence, the influence of smooth trends is essentially negligible when the synchrosqueezed transforms are applied to estimate local wave vectors $N n^{\TT} F \grad\phi(x))$ with a sufficiently large wave number.

\section{Crystal defect analysis algorithms and implementations}
\label{sec:alg}

This section introduces several algorithms based on the features of crystal images and $2D$ synchrosqueezed transforms for a fast analysis of local defects, crystal rotations and deformations. As discussed in the introduction, we will focus on the analysis of an image with only one type of crystals, i.e., suppose
\begin{align*}
f(x)&=\sum_{k=1}^M \chi_{\Omega_k} (x)\left( \alpha_k(x) S\left(2\pi NF\phi_k(x)\right)+c_k(x)\right)\\
&=\sum_{k=1}^M \chi_{\Omega_k} (x)\left( \sum_{n\in\ZZ^2}\widehat{S}(n)\alpha_k(x) e^{2\pi iNn^{\TT}F\phi(x)}+c_k(x)\right).
\end{align*}

In the $2D$ space domain, there are five kinds
of 
Bravais lattices, which are oblique, rectangular, centered rectangular
(rhombic), hexagonal, and square \cite{Bravais} as shown in Figure
\ref{fig:Bravais}. Accordingly, there are five kinds of $2D$ Fourier
power spectra $|\widehat{S}(n)|$. These spectra have a few dominant
wave vectors in terms of energy, i.e., a few $|\widehat{S}(n)|$ with
large values. At each interior point of a grain, a sufficiently
localized Fourier transform is able to recover an approximate
distribution of the $2D$ Fourier power spectrum, based on which the
corresponding reference configuration of this grain can be
specified. Hence, for the simplicity of a presentation, we will
restrict ourselves to the analysis of images of hexagonal crystals
(e.g.~Figure \ref{fig:Toy_fast_spec} (left)). A generalization of our
algorithms to other kinds of simple crystal images is straightforward.
In the case of a complex lattice, there might be numerous local wave
vectors with relatively large energy. Feature extraction and dimension
reduction techniques should be applied to provide a few crucial local
wave vectors. This would be an interesting future work.

We will introduce two fast algorithms for crystal image
analysis. Algorithm~$\ref{alg:undeformed}$ provides estimates of grain
boundaries and crystal rotations; while Algorithm~$\ref{alg:deformed}$
further identifies point defects, dislocations, and deformations.  To
make our presentation more transparent, the algorithms and
implementations are introduced with toy examples. For
Algorithm~$\ref{alg:undeformed}$, we will use the example in
Figure~\ref{fig:Toy_fast_spec} (left) which contains two undeformed
grains with a straight grain boundary. While
  Figure~\ref{fig:Toy_fast_spec} is a synthetic example, it
  illustrates nicely the key feature of atomic crystal images and the
  idea of local phase plane spectrum. In this example, the reciprocal
lattice parameter $N=120$ and the crystal rotations are given by $15$
and $52.5$ degrees on the left and right respectively.  We introduce
Algorithm~$\ref{alg:deformed}$ using strained examples of a small
angle boundary (see Figure \ref{fig:GB5GB8} (left)) and with some
isolated dislocations (see Figure \ref{fig:GB5GB8}
(right)). These are examples from phase field crystal simulations
  \cite{ElseyWirth:12}.

\subsection{Frequency band detection (bump detection)}
\label{sec:BumpDetection}

Typically, each grain 
\[
\chi_{\Omega_k} (x)\left( \alpha_k(x) S\left(2\pi NF\phi_k(x)\right)+c_k(x)\right)
\]
in a polycrystalline crystal image can be identified as a $2D$ general intrinsic mode type function of type $(M,N,F)$ with a small $M$ near $1$, unless the strain is too large. Hence, the $2D$ Fourier power spectrum of a multi-grain image would have several well-separated nonzero energy annuli centered at the origin due to crystal rotations (see an example shown in Figure \ref{fig:Toy_fast_spec} (middle)). Suppose the radially average Fourier power spectrum is defined as
\[
E(r)=\frac{1}{r}\int_0^{2\pi}\left|\widehat{f}(r,\theta)\right| d\theta,
\]
where $\widehat{f}(r,\theta)$ is the Fourier transform in the radial coordinate: $\widehat{f}(r,\theta)=\widehat{f}(\xi)$ and $\xi=(r\cos(\theta),r\sin(\theta))^T\in \RR^2$. Then $E(r)$ would have several well-separated energy bumps (see Figure \ref{fig:Toy_fast_spec} (right)) for the same reason.

As we can see in Figure \ref{fig:Toy_fast_spec} (middle), a hexagonal crystal image with a single grain \[f(x)=\alpha(x) S\left(2\pi NF\phi(x)\right)+c(x)\] has six dominant local wave vectors close to $v_j(\theta(x))$, $j=0,1,\dots,5$, which are the vertices of a hexagon centered at the origin in the Fourier domain, i.e.,
\[
v_j(\theta(x))=\left(N\cos(\theta(x)+\frac{j\pi}{3}),N\sin(\theta(x)+\frac{j\pi}{3})\right)^\TT,\quad j=0,1,\dots,5,
\]
for $\theta(x)\in[0,\frac{\pi}{3})$. Suppose $S\left(2\pi NF (x)\right)$ is a reference configuration, then the local rotation function with respect to each vertex $v_j(0)$ is
\begin{equation}
\label{eqn:LRF}
\beta(f)(x)\approx\arg(v_j(\theta(x)))-\arg(v_j(0))=\theta(x).
\end{equation}
Actually, $f(x)$ can approximately be considered as a rotated version of $\alpha S\left(2\pi NF (x+z)\right)+c$ by an angle $\theta(x)+\frac{k\pi}{3}$ for any $k\in\ZZ$ due to the crystal symmetry. The restriction $\theta(x)\in[0,\frac{\pi}{3})$ guarantees a unique local rotation function $\beta(f)(x)$. 

\begin{figure}[ht!]
  \begin{center}
    \hspace{-4em}
      \includegraphics[height=1.8in]{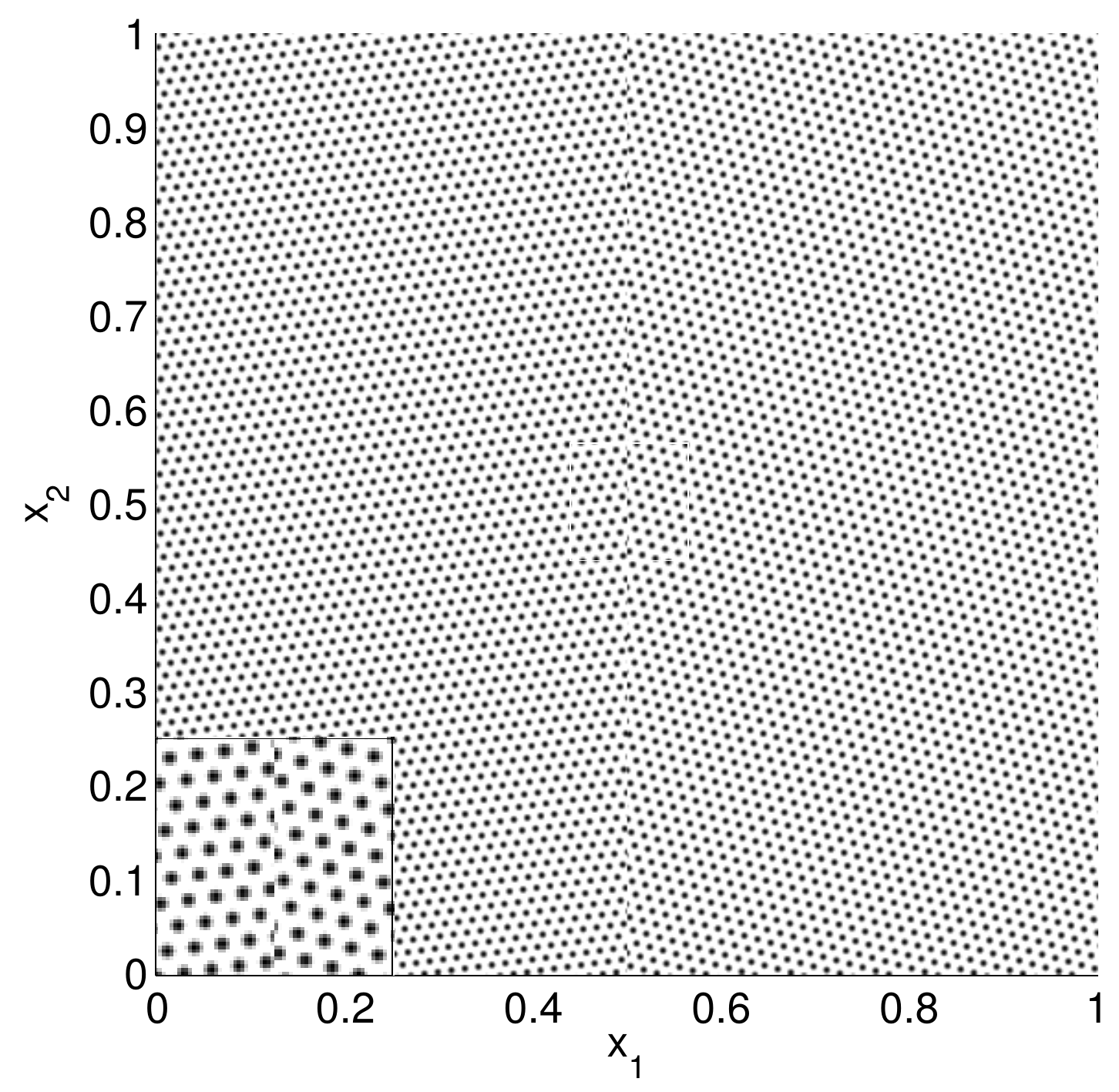} \; \includegraphics[height=1.8in]{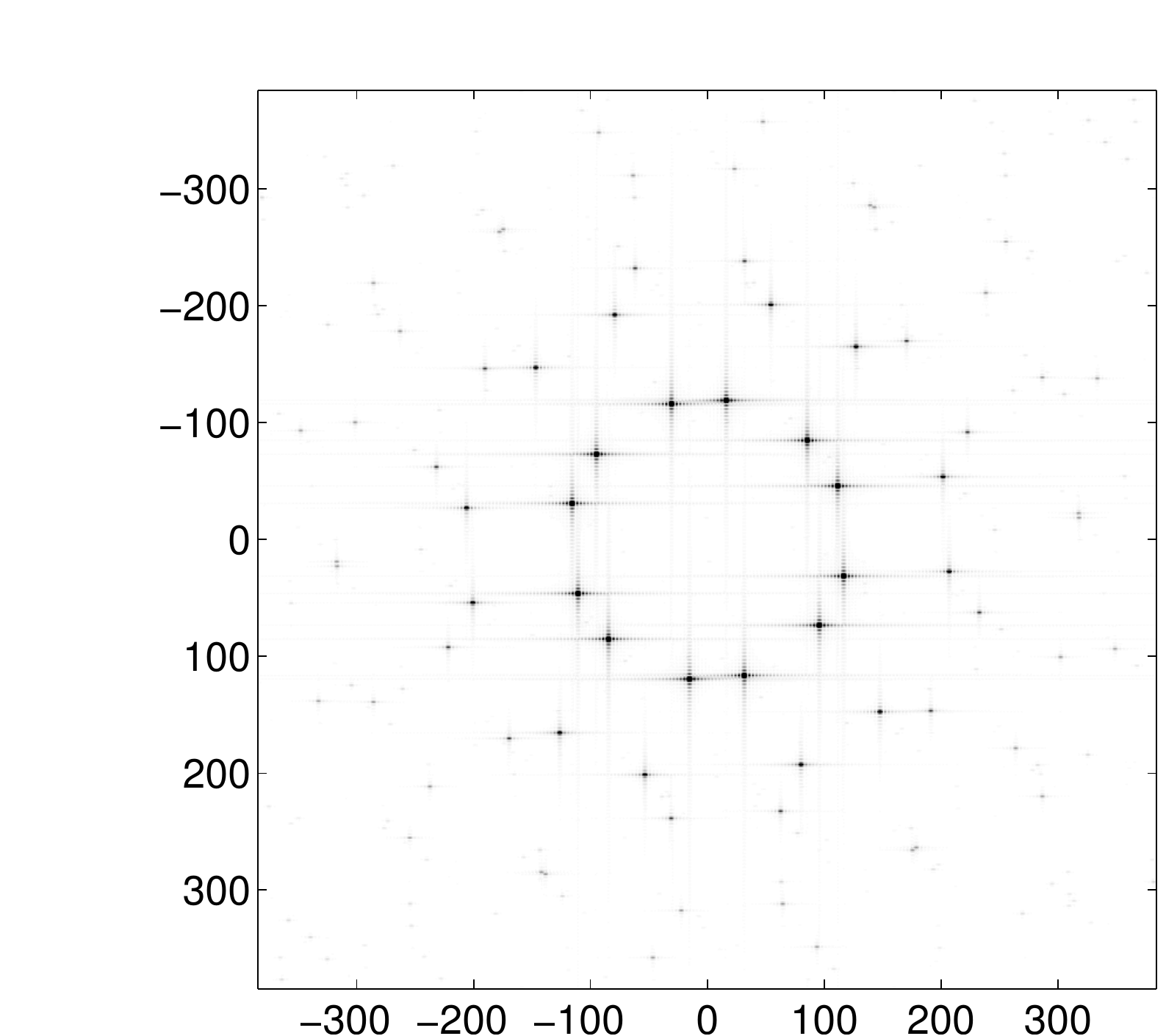} \includegraphics[height=1.8in]{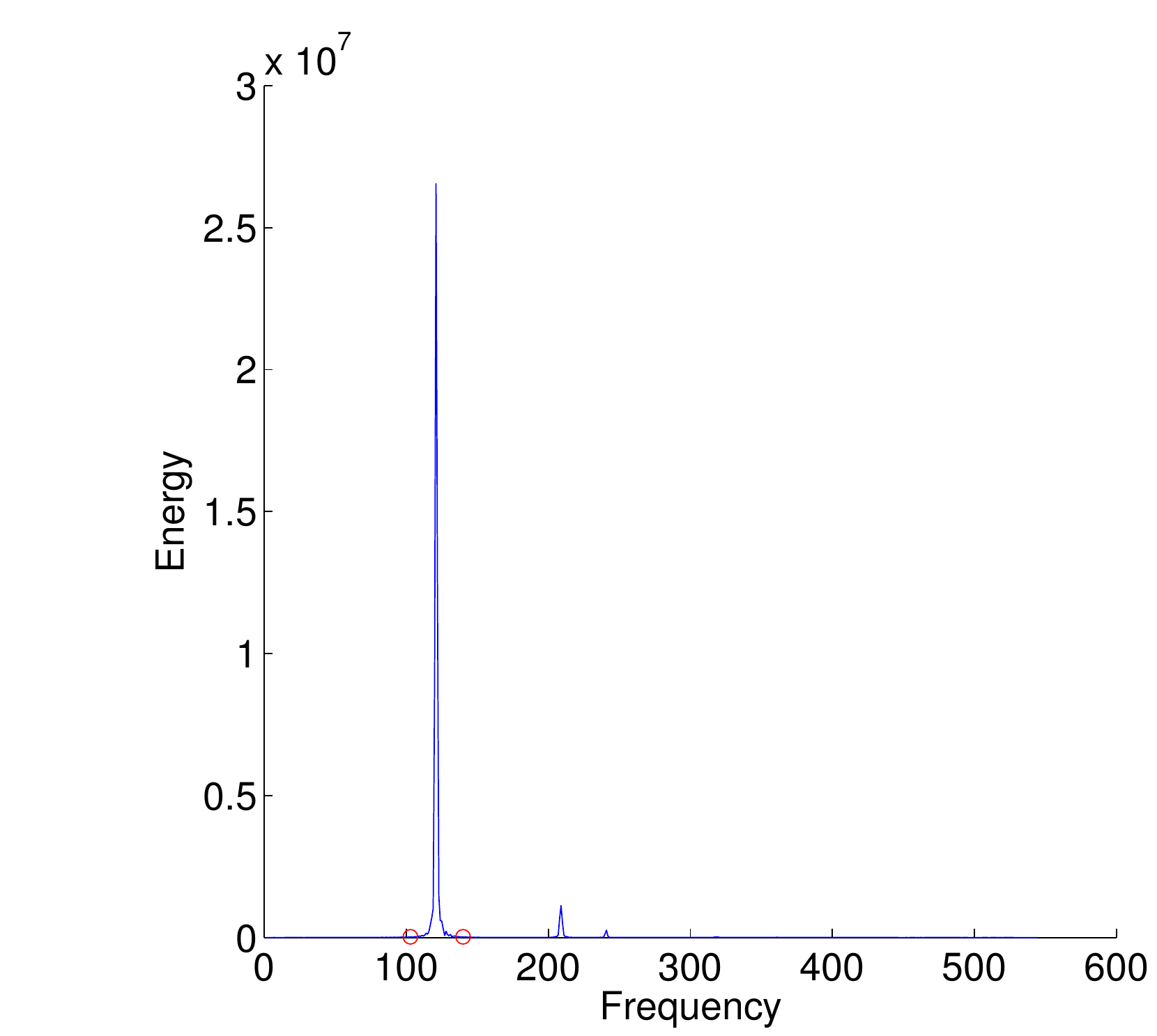}
  \end{center}
  \caption{Left: An undeformed example of two hexagonal grains with a vertical line boundary. Middle: Its Fourier power spectrum. The number of dominant local wave vectors is $12$ due to the superposition of spectrum of the two grains. Right: Its radially average Fourier power spectrum with the identified most dominant energy bump indicated by two red circles.}
  \label{fig:Toy_fast_spec}
\end{figure}

The discussion above shows that it is sufficient to compute the local rotation function using the dominant local wave vectors. To reduce the computational cost of $2D$ SST, the support of the most dominant energy bump in the radially average Fourier power spectrum should be identified (see Figure \ref{fig:Toy_fast_spec} (right)). Suppose the support (i.e. frequency band) is $[r_1,r_2]$. Then a band-limited fast $2D$ SST as introduced later is applied to estimate the local wave vectors with wave numbers in $[r_1,r_2]$.

In what follows, we would walk through the steps to determine the dominant frequency band $[r_1,r_2]$ with a time complexity $O(L^2\log L)$ for an $L$ by $L$ image. For simplicity, we consider images that are periodic over the unit square $[0,1)^2$ in $2D$. If it is not the case, the images will be periodized by padding zeros around the image boundary. Let
\[
X = \{ (n_1/L,n_2/L): 0 \le n_1, n_2<L,\, n_1,n_2 \in \ZZ\}
\]
be the $L\times L$ spatial grid at which crystal image functions are sampled.
The corresponding $L\times L$ Fourier grid is
\[
\Xi = \{ (\xi_1,\xi_2): -L/2\le \xi_1,\xi_2<L/2,\, \xi_1,\xi_2 \in \ZZ\}.
\]
For a function $f(x)\in \ell^2(X)$, we apply the $2D$ FFT to obtain $\widehat{f}(\xi)\in \ell^2(\Xi)$ first. Then the energy $\left| \widehat{f}(\xi)\right|$ should be stacked up and averaged to obtain the radially average Fourier power spectrum $E(r)$. To realize this step, a grid of step size $\Delta$ is generated to discretize the domain $[0,\infty)$ in variable $r$ as follows:
\[
R = \{n\Delta: n\in \NN\}.
\]
At each $r = n\Delta \in R$, we associate a cell $D_r$
started at $r$
\[
D_r = 
\left[n\Delta,(n+1)\Delta\right).
\]
Then $E(r)$ is estimated by
\[
E(r) = \frac{1}{r}\sum_{\xi\in\Xi: \left|\xi\right|\in D_r }\left| \widehat{f}(\xi)\right|.
\]

The component task to identify the most dominant energy bump in $E(r)$ here belongs to a simple case of geometric object identification problems which have been extensively studied in \cites{Illingworth:1988,GotzD95,Yu04detectingcircular,Arias:2005}. For a $1D$ discrete signal of length $L$, it has been proved that the optimal complexity to detect bumps in this signal is $O(L)$ in \cite{Arias:2005}. Since $E(r)$ has only a few well-separated and sharp energy bumps with relatively small noise, we would adopt the following straightforward but still effective algorithm with $O(L)$ complexity to detect the most dominant energy bump in $E(r)$.

\begin{algo}\label{alg:bump}
Bump detection algorithm
\begin{algorithmic}[1]
\State Input: Vector $E=(E_i)$, $i=0,\dots,L-1$, parameters $c_1$ and $c_2$.
\State Output: End points of the identified bump $r_1$ and $r_2$.
\Function {}{} {\tt BumpDetection}{$(E,c_1,c_2)$}
\State Find $p_0$ s.t. $E_{p_0}=\max_i E_i$ and set up a threshold $\delta = E_{p_0}c_1$.
\State Compute $\wt{E}_i=\min(E_i,\delta)$ for $i=1,\dots,L-1$.
\State Find the largest $p_2< p_0$ s.t. $\wt{E}_{p_2} \geq \wt{E}_{p_2+1}$. If $p_2$ does not exist, let $p_2=0$.
\State Find the largest $p_1< p_0$ s.t. $\wt{E}_{p_1} > \wt{E}_{p_1+1}$. If $p_1$ exists, let $r_1=(p_1+p_2)/2$. Otherwise, let $r_1=p_2$.
\State Find the smallest $p_1> p_0$ s.t. $\wt{E}_{p_1} \geq \wt{E}_{p_1-1}$. If $p_1$ does not exist, let $p_1=L-1$.
\State Find the smallest $p_2> p_0$ s.t. $\wt{E}_{p_2} > \wt{E}_{p_2-1}$. If $p_2$ exists, let $r_2=(p_1+p_2)/2$. Otherwise, let $r_2=p_1$.
\State Let $r_1=\max\{0,r_1(1-c_2)\}$ and $r_2=\min\{L-1,r_2(1+c_2)\}$.
\State \Return $[r_1,r_2]$
\EndFunction
\end{algorithmic}
\end{algo}

When the dominant energy bump is located in the low frequency part, i.e. $r_1=0$, the energy comes from the smooth trend functions of the crystal image. Hence, it is reasonable to eliminate this bump from $E(r)$ and apply Algorithm \ref{alg:bump} again to update new $[r_1,r_2]$, which is the frequency band of dominant local wave vectors. Since the complexity of each searching procedure is at most $O(L)$, the complexity of Algorithm \ref{alg:bump} is $O(L)$. Because the time complexity of $2D$ FFT is $O(L^2\log L)$, the total time complexity for frequency band detection is $O(L^2\log L)$.

\subsection{Band-limited $2D$ fast SST}
\label{sec:SST}

The $2D$ synchrosqueezed transforms were originally proposed in \cites{YangYing:2013,YangYing:preprint} for $2D$ mode decomposition problems. For an image of size $L$ by $L$, the time complexity for these transforms is $O(L^{2+t-s}\log(N))$, where $t$ and $s$ are the geometric scaling parameters for the general wave packet transforms. To reduce the complexity, a band-limited $2D$ fast SST is introduced following a similar methodology to estimate local wave vectors with wave numbers in $[r_1,r_2]$ provided by the frequency band detection.

The band-limited $2D$ SST is based on a band-limited $2D$ general wave packet transform. The key idea is to restrict the class of general wave packets
\[
\{w_{a\theta b}(x),a\in [1,\infty),\theta\in[0,2\pi),b\in \RR^2\}
\]
to a band-limited class
\[
\{w_{a\theta b}(x),a\in [r_1,r_2],\theta\in[0,2\pi),b\in \RR^2\}.
\]
Meanwhile, a discrete analog of the band-limited class of general wave packets is constructed by specifying a set of tilings covering the annulus $\{\xi\in \RR^2:r_1\leq \left|\xi\right| \leq r_2\}$. To be more specific, we need to specify how to decimate the Fourier domain $(a,\theta)$ and the position space $b$ as follows.

In the Fourier domain, the decimation follows two steps. First, we follow the discretization and construction of the discrete general curvelet transform in \cite{YangYing:preprint} to construct the discrete general wave packets here. Notice that the support parameter $d$ in \cite{YangYing:preprint} is $1$. The sizes of tilings in this paper should be multiplied by $d$. Second, we keep those discrete general wave packets with supports overlapping the annulus $\{\xi\in \RR^2:r_1\leq \left|\xi\right| \leq r_2\}$ to obtain the band-limited class of discrete general wave packets.

To decimate the position space $b$, we discretize it with an $L_B\times L_B$ uniform grid:
\[
B = \{ (n_1/L_B,n_2/L_B): 0 \le n_1, n_2<L_B, n_1,n_2 \in \ZZ\}.
\]
It is required that $L_B$ is large enough so that a sampling grid of size $L_B\times L_B$ can cover the supports of general wave packets of interest in the Fourier domain. For a crystal image
\[
f(x)=\sum_k\chi_{\Omega_k} (x)\left( \alpha_k(x) S\left(2\pi NF \phi_k(x)\right)+c_k(x)\right)
\]
defined in $[0,1)^2$, its fundamental wave number is $O(N)$, i.e., $r_1=O(N)$ and $r_2=O(N)$. Hence, the largest support of general wave packets in the Fourier domain is of size $O(N^t)$, implying that $L_B$ is at least $O(N^t)$. 

With the band-limited class of general wave packets ready, we follow the fast algorithms in \cite{YangYing:preprint} to compute the forward general wave packet transform $W_f(a,\theta,b)$, the local wave vector estimate $v_f(a,\theta,b)$ and the synchrosqueezed energy distribution $T_f(a,\theta,b)$. Notice that the width of the frequency band $r_2-r_1=O(1)$ compared to the fundamental wave number, which implies that the cardinality of the band-limited class is $O(N^{1-s})$. A straightforward calculation shows that the time complexity of the band-limited general wave packet transform is $O(L^2\log L+N^{1-s}L_B^2\log L_B)$. Because the synchrosqueezing procedure takes a complexity of $O(N^{1-s}L_B^2)$, the total time complexity for the band-limited $2D$ fast SST is $O(L^2\log L+N^{1-s}L_B^2\log L_B)$.

As an example, Figure \ref{fig:Toy_fast_ss} shows the synchrosqueezed energy distribution $T_f(a,\theta,b)$ in a polar coordinate at three different positions. Because the crystal image is real, it is enough to compute the synchrosqueezed energy distribution for $\theta\in[0,\pi)$. The results show that the essential support of $T_f(a,\theta,b)$ can accurately estimate local wave vectors when location $b$ is not at the boundary. When $b$ is at the boundary, the essential support of $T_f(a,\theta,b)$ can still provide some information, e.g. crystal rotations.

\begin{figure}[ht!]
  \begin{center}
    \includegraphics[height=1.7in]{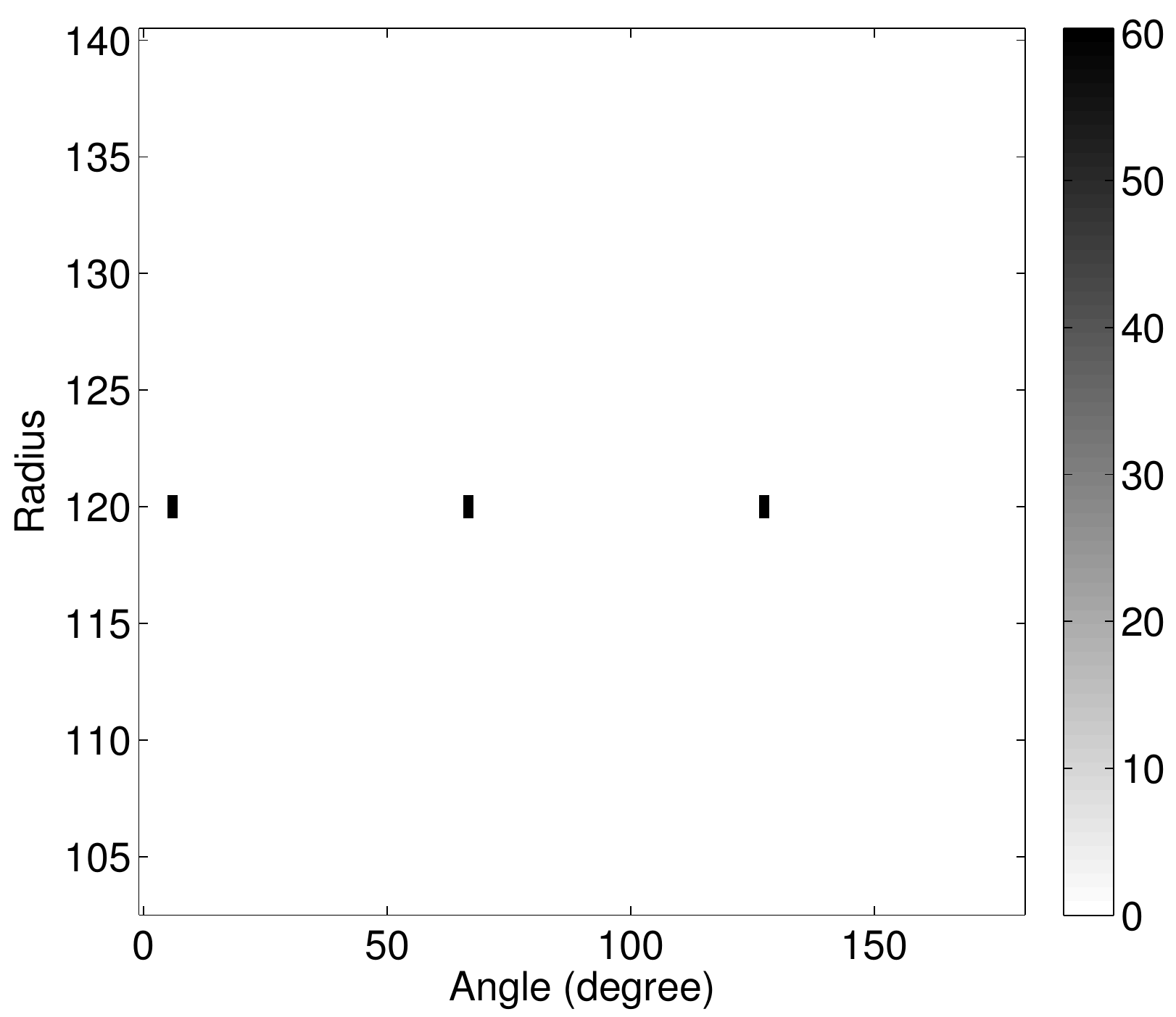}   \includegraphics[height=1.7in]{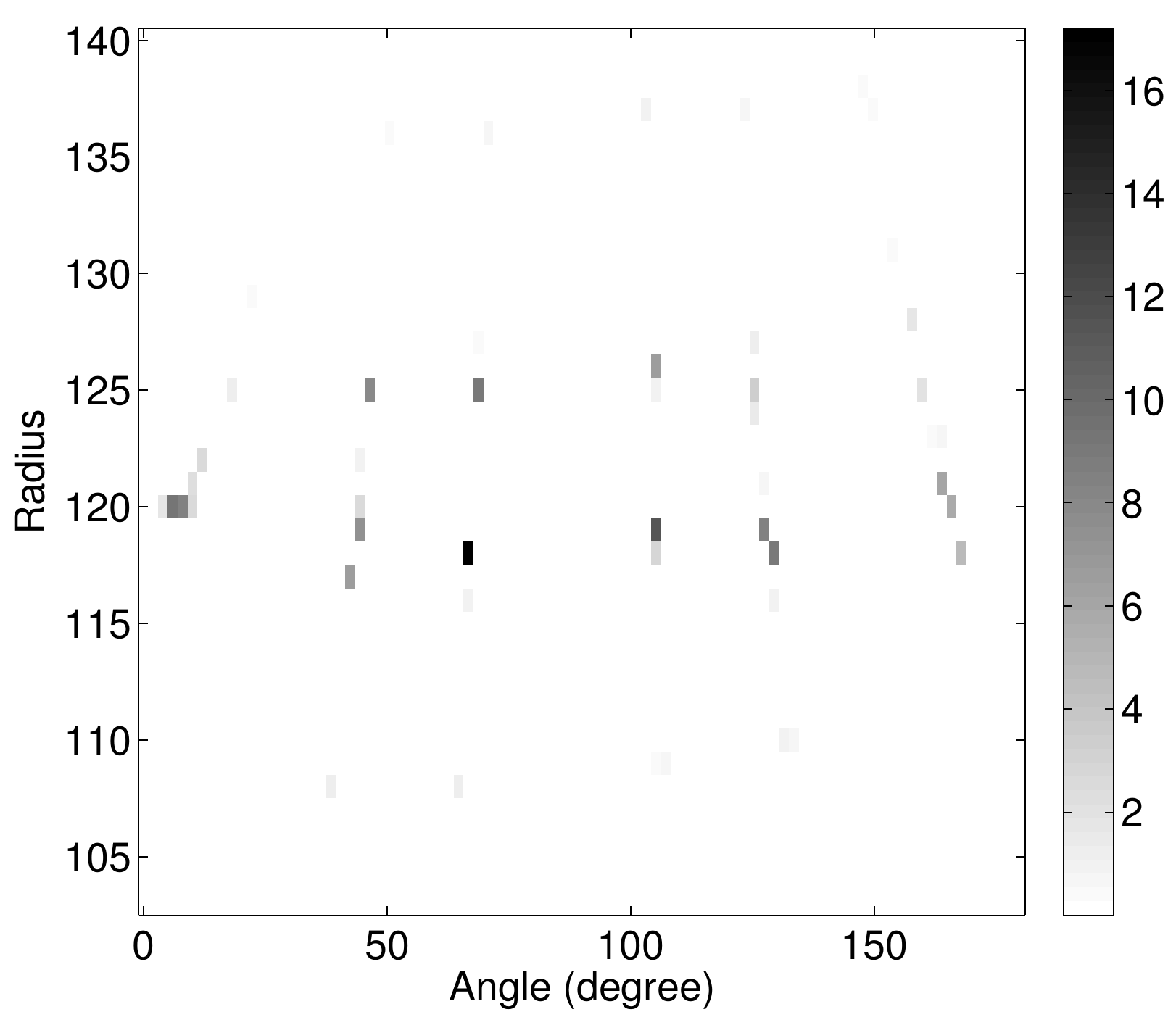} \includegraphics[height=1.7in]{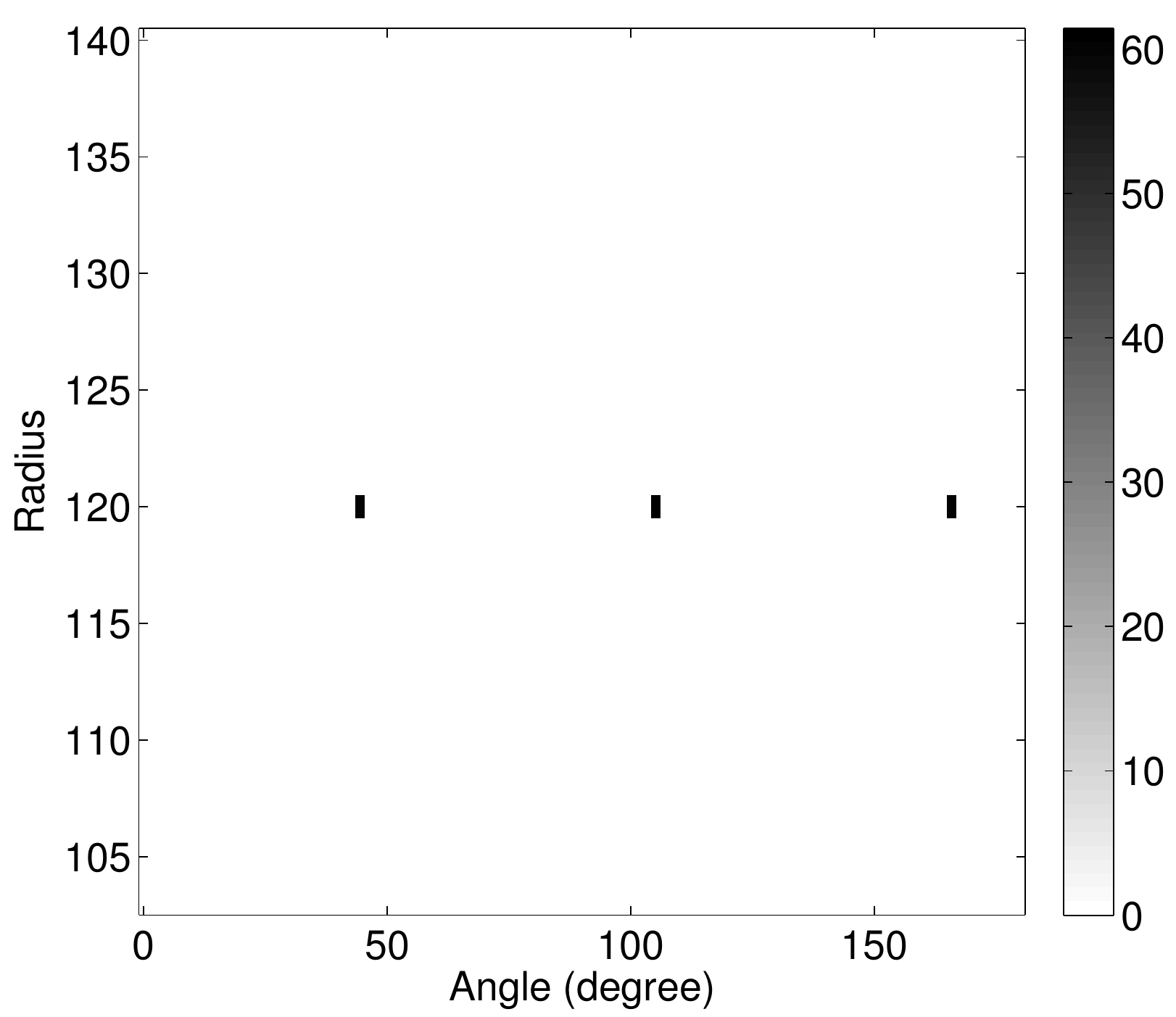}
  \end{center}
  \caption{The synchrosqueezed energy distribution $T_f(a,\theta,b)$ of Figure~\ref{fig:Toy_fast_spec} (left) at three different points $b_i$, $i=1,2,3$. Left: $b_1=(0.25,0.5)$ is in the middle of the left grain. Middle: $b_2=(0.5,0.5)$ is at the boundary. Right: $b_3=(0.75,0.5)$ is in the middle of the right grain. Note that the scale of colorbar is different in the middle panel.}
  \label{fig:Toy_fast_ss}
\end{figure}

\subsection{Defect detection algorithms}

As we can see in Figure \ref{fig:Toy_fast_ss}, the synchrosqueezed energy around each local wave vector $\grad\left(Nn^\TT F\phi(x)\right)$ is stable and of order $|\widehat{S}(n)|\alpha_k(x)$ when $x$ is in the perfect interior $\wt{\Omega}_k$. Moreover, the energy would decrease fast near the boundary $\partial\Omega_k$ and becomes zero soon outside $\Omega_k$. This motivates the application of synchrosqueezed energy distribution to identify grain boundaries by detecting the irregularity of energy distribution as follows.

\begin{algomain}[Fast defect detection algorithm based on stacked
  synchrosqueezed energy]\
\label{alg:undeformed}

\begin{itemize}
\item[\textbf{Step 1:}] Stack the synchrosqueezed energy distribution and define a stacked synchrosqueezed energy distribution as
\[
\wt{T}_f(a,\theta,b)=T_f(a,\theta,b)+T_f(a,\theta+\frac{\pi}{3},b)+T_f(a,\theta+\frac{2\pi}{3},b)
\]
for $\theta\in[0,\frac{\pi}{3})$. 

\item[\textbf{Step 2:}] Compute an angular total energy distribution \[E_a(\theta,b)=\int_{r_1}^{r_2}\wt{T}_f(a,\theta,b)da.\]

\item[\textbf{Step 3:}] For each $b$, apply Algorithm \ref{alg:bump} to identify the most dominant energy bump in $E_a(\theta,b)$. Denote the range of this bump as $[\theta_{11}(b),\theta_{12}(b)]$.

\item[\textbf{Step 4:}]  Compute the total energy of the first identified bump as
\[
\TE_1(b)=\int_{\theta_{11}(b)}^{\theta_{12}(b)}E_a(\theta,b)d\theta.
\]

\item[\textbf{Step 5:}] Compute a weighted average angle of the first bump as
\[
\Angle(b)=\frac{1}{\TE_1(b)} \int_{\theta_{11}(b)}^{\theta_{12}(b)} E_a(\theta,b)\theta d\theta.
\]

\item[\textbf{Step 6:}] For each $b$, update $E_a$ such that $E_a(\theta,b)=0$, if $\theta\in[\theta_{11}(b),\theta_{12}(b)]$. Apply Algorithm~\ref{alg:bump} again to update the most dominant energy bump in $E_a$. Denote the range of this bump as $[\theta_{21}(b),\theta_{22}(b)]$.

\item[\textbf{Step 7:}] Compute the total energy of the second identified bump by
\[
\TE_2(b)=\int_{\theta_{21}(b)}^{\theta_{22}(b)}E_a(\theta,b)d\theta.
\]

\item[\textbf{Step 8:}] Compute the boundary indicator function
\[
\BD(b)=\frac{1}{\sqrt{\TE_1(b)-\TE_2(b)+1}}.
\]
\end{itemize}
\end{algomain}

\begin{figure}[ht!]
  \begin{center}
    \hspace{-3em}
      \includegraphics[height=1.8in]{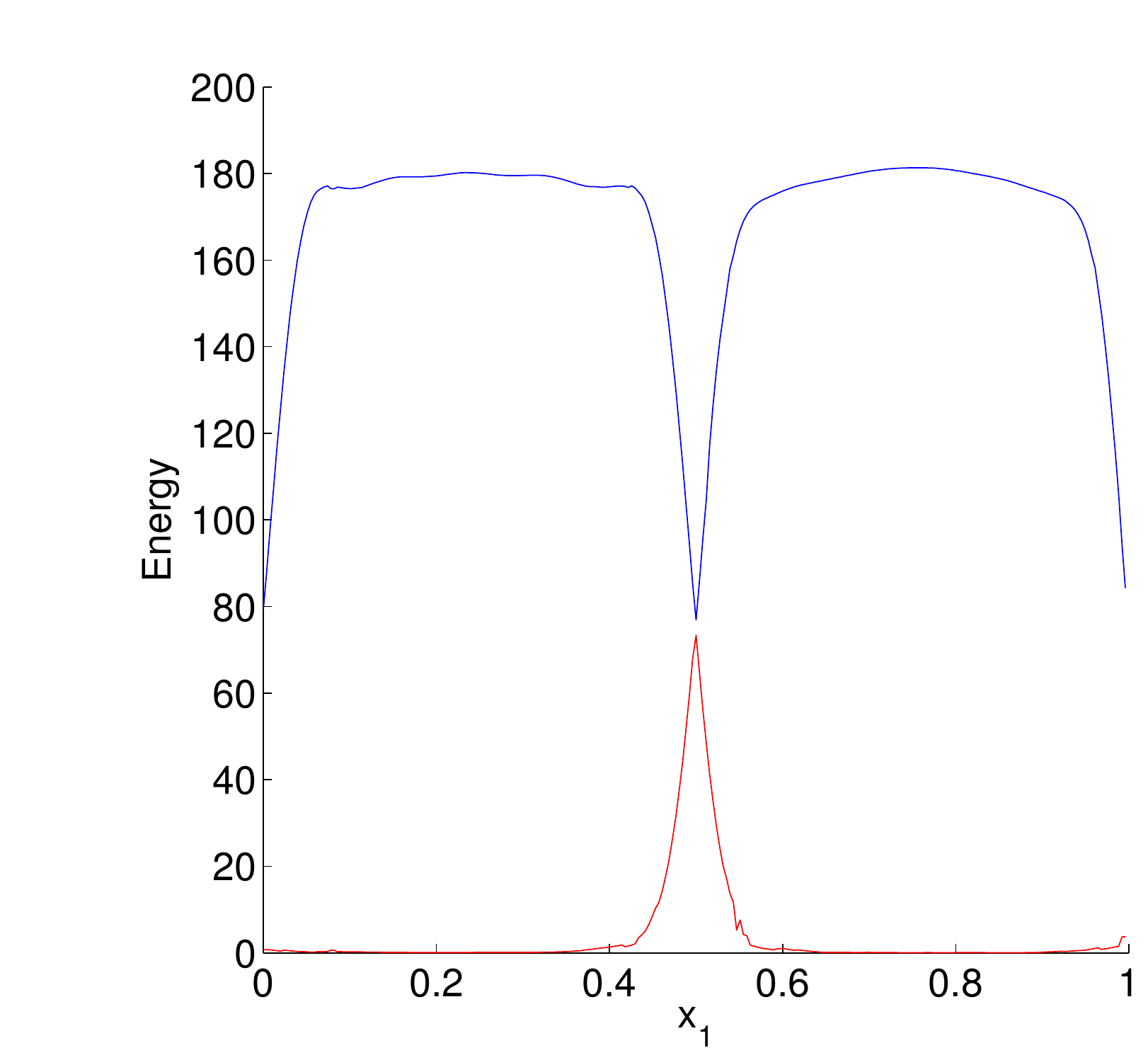}  \includegraphics[height=1.8in]{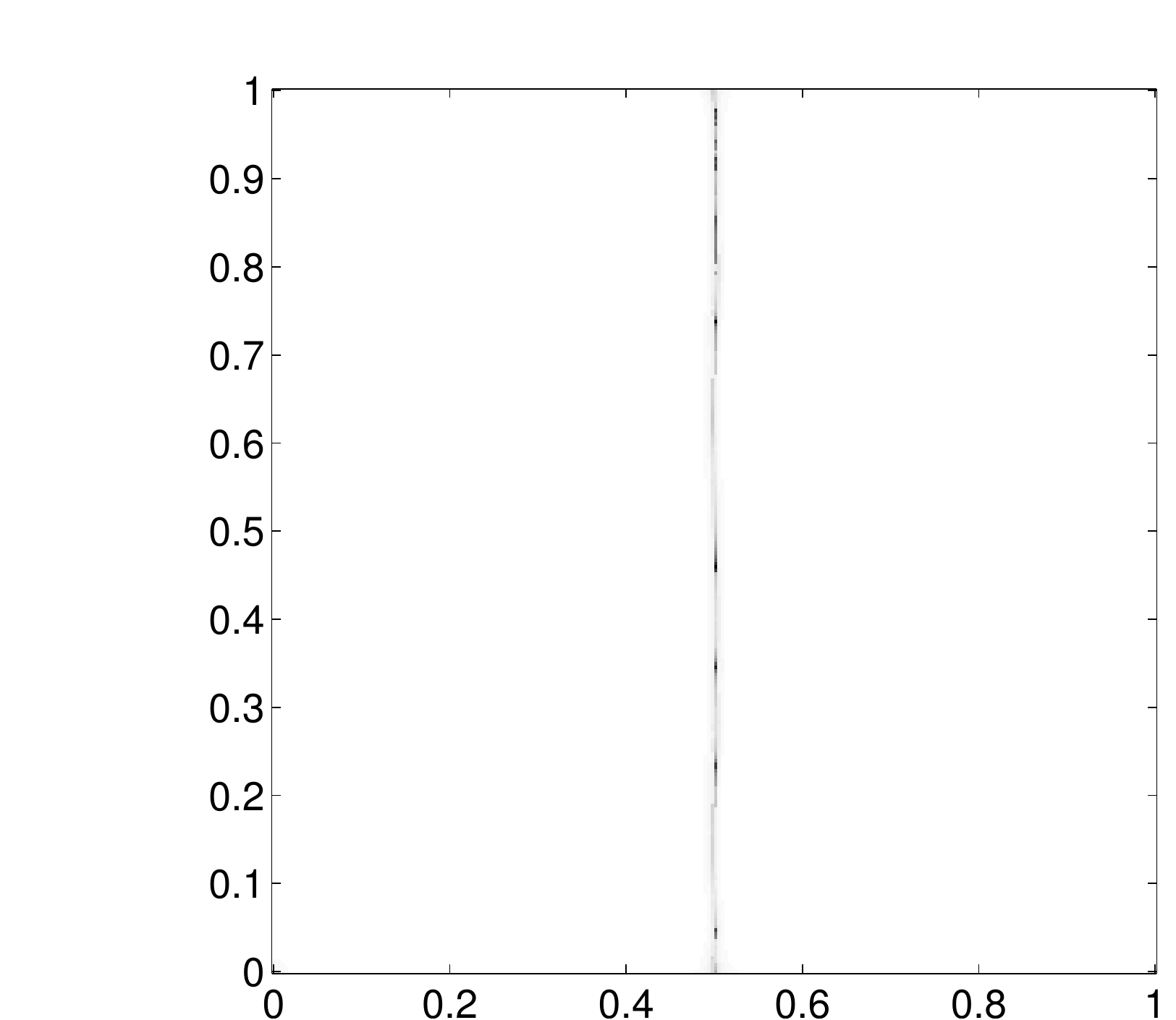}  \includegraphics[height=1.8in]{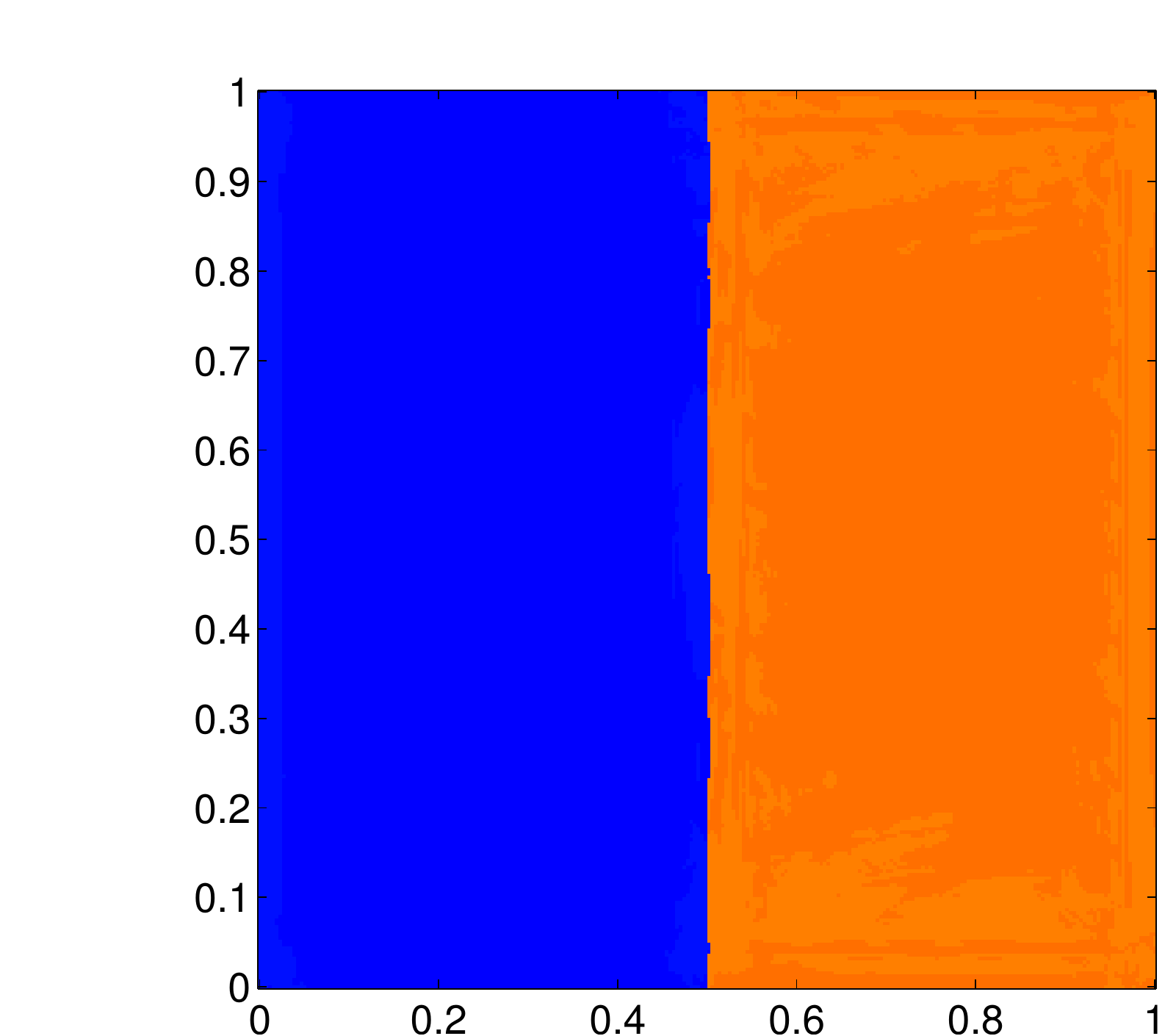}
  \end{center}
  \caption{Left: The total energy function $TE_1(b)$ in blue and $TE_2(b)$ in red for fixed $b_2=0.5$. Middle: The boundary indicator function $\BD(b)$. Right: The weighted average angle $\Angle(b)$ as an approximation of crystal rotations. The real crystal rotations are $15$ degrees on the left and $57.5$ degrees on the right.}
  \label{fig:Toy_fast_result}
\end{figure}

The synchrosqueezed energy distribution of each grain behaves like an energy bump essentially supported in $\Omega_k$ and quickly decays outside $\Omega_k$. Hence, two energy bumps have close energy in the transition area containing the grain boundary. Since $\TE_1(b)$ acts as the maximum of two energy distributions and $\TE_2(b)$ acts as the minimum (see Figure \ref{fig:Toy_fast_result} (left)), $\TE_1(b)-\TE_2(b)$ decays near the grain boundary and the boundary indicator function $\BD(b)$ is relatively large at the grain boundary. As Figure \ref{fig:Toy_fast_result} (middle) shows, the grain boundary can be identified from a grayscale image of $\BD(b)$.

By the discussion in Section \ref{sec:BumpDetection}, the local rotation function with respect to each local wave vector of the reference hexagonal configuration are approximately the same. Since, the synchrosqueezed energy distribution $T_f(a,\theta,b)$ is stacked together per $\frac{\pi}{3}$ in $\theta$, the weighted average $\Angle(b)$ approximates the local rotation functions in a weighted average sense. As shown in Figure \ref{fig:Toy_fast_result} (right), $\Angle(b)$ accurately reflects the crystal rotations in this example.

As discussed in Section \ref{sec:BumpDetection} and \ref{sec:SST}, before Algorithm $\ref{alg:undeformed}$, it takes $O(L^2\log L+N^{1-s}L_B^2\log L_B)$ time complexity to estimate the frequency band $[r_1,r_2]$ and to compute $T_f(a,\theta,b)$. Suppose that the discrete grid of $T_f(a,\theta,b)$ at each $b$ is of size $L_R \times L_A$, then the time complexity of Algorithm $\ref{alg:undeformed}$ is $O(L_A L_R L_B^2)$. Hence, the total complexity for the crystal analysis is $O(L^2\log L+N^{1-s}L_B^2\log L_B+L_A L_R L_B^2)$.

The stacking step in Algorithm~$\ref{alg:undeformed}$ is averaging the influence of each local wave vector. This gives a stable result of grain boundary and crystal rotation estimates even with severe noise as will be illustrated in Section \ref{sec:examples}. However, Algorithm~$\ref{alg:undeformed}$ might miss some local defects that would not influence all local wave vectors simultaneously. For example, in Figure \ref{fig:GB5GB8} (left), two of the underlying wave-like components have smoothly changing directions, while the third one changes its direction suddenly at a line segment, resulting in a small angle boundary. At some local dislocations, as Figure \ref{fig:GB5GB8} (right) shows, the dislocation might not cause irregularity to all wave-like components. Hence, to be more sensitive to local irregularity, it is reasonable to remove the stacking step in Algorithm~$\ref{alg:undeformed}$, if noise is relatively small. This motivates the following algorithm.

\begin{figure}[ht!]
  \begin{center}
    \includegraphics[height=1.8in]{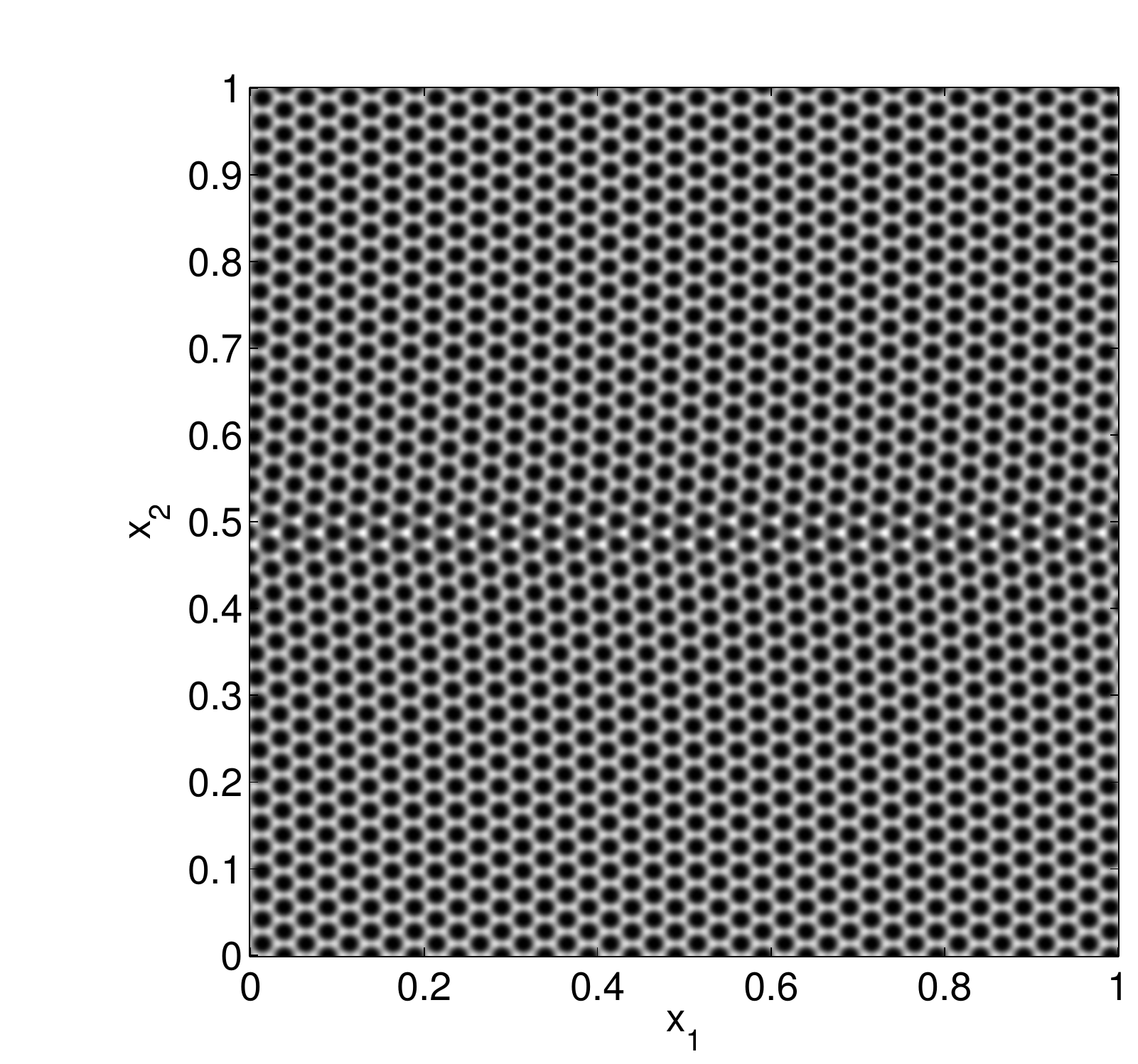} \qquad  \includegraphics[height=1.8in]{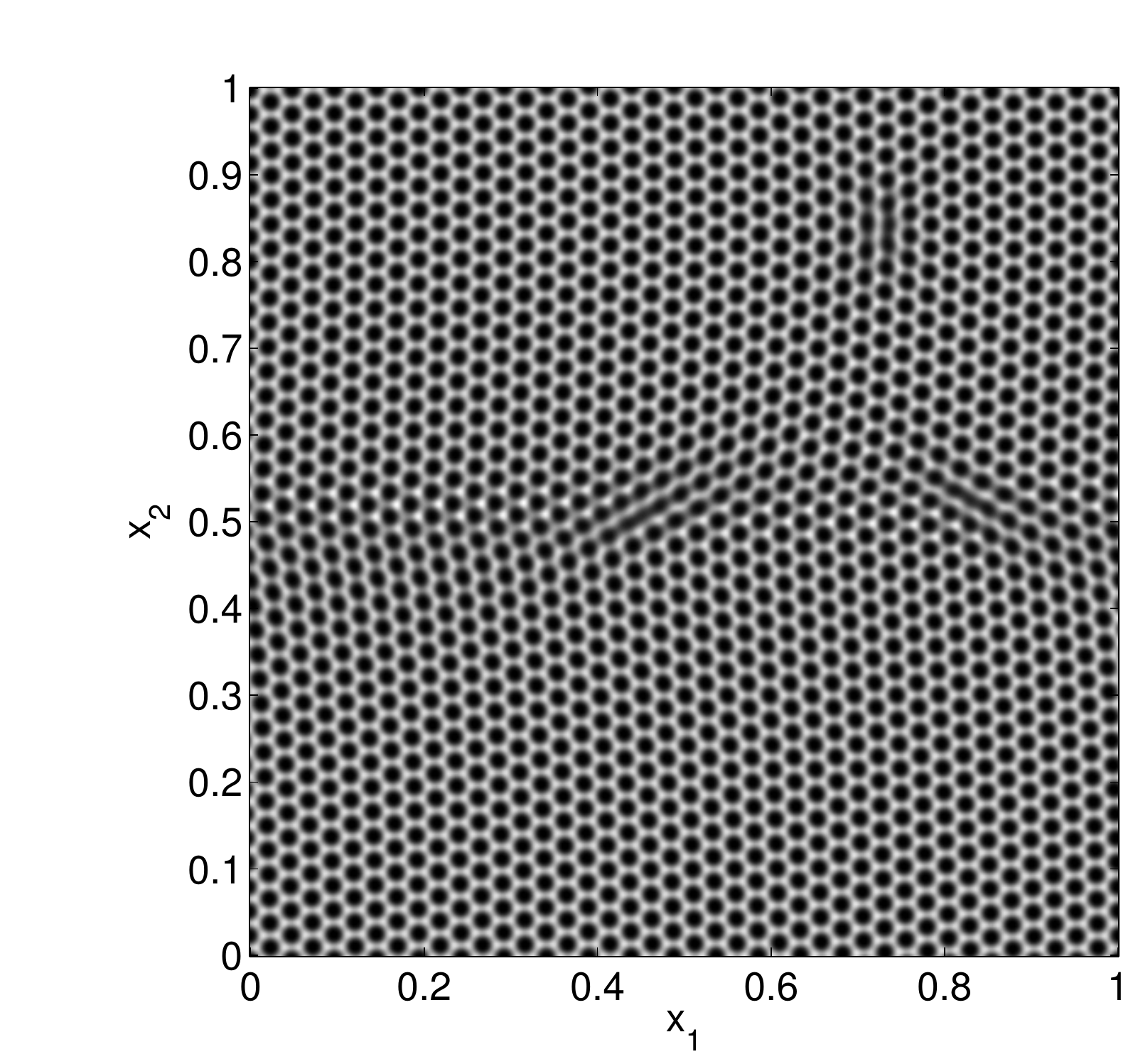}
  \end{center}
  \caption{Left: An example of a small angle boundary. Right: An example of some isolated dislocations. Courtesy of Benedikt Wirth. The sizes of these images are $512\times 512$ pixels.}
  \label{fig:GB5GB8}
\end{figure}

\begin{algomain}[Fast defect detection algorithm with enhanced sensitivity]\

\label{alg:deformed}

\begin{itemize}
\item[\textbf{Step 1:}] Define $\wt{T}_j(a,\theta,b) = T_f(a,\theta+j\pi/3,b)$ for $\theta\in[0,\frac{\pi}{3})$ and $j=0,1,2$. Apply Steps $2-8$ in Algorithm~\ref{alg:undeformed} to $\wt{T}_j(a,\theta,b)$ to compute $\TE_{1j}(b)$, $\TE_{2j}(b)$, $\Angle_j(b)$ and $\BD_j(b)$ for each $j$.
\item[\textbf{Step 2:}] For each $j=0,1,2$, compute the weight function
\[
W_j(b)=\frac{\TE_{1j}(b)+\TE_{2j}(b)}{\displaystyle \sum_j \left(\TE_{1j}(b)+\TE_{2j}(b)\right)}.
\]
\item[\textbf{Step 3:}] Compute the weighted average angle
\[
\Angle(b)=\sum_j W_j(b)\Angle_j(b).
\]
\item[\textbf{Step 4:}] Compute the  weighted boundary indicator function
\[
\BD(b)=\sum_j W_j(b)\BD_j(b).
\]
\end{itemize}
\end{algomain}

\begin{figure}[ht!]
  \begin{center}
    \begin{tabular}{lll}
    \includegraphics[height=1.3in]{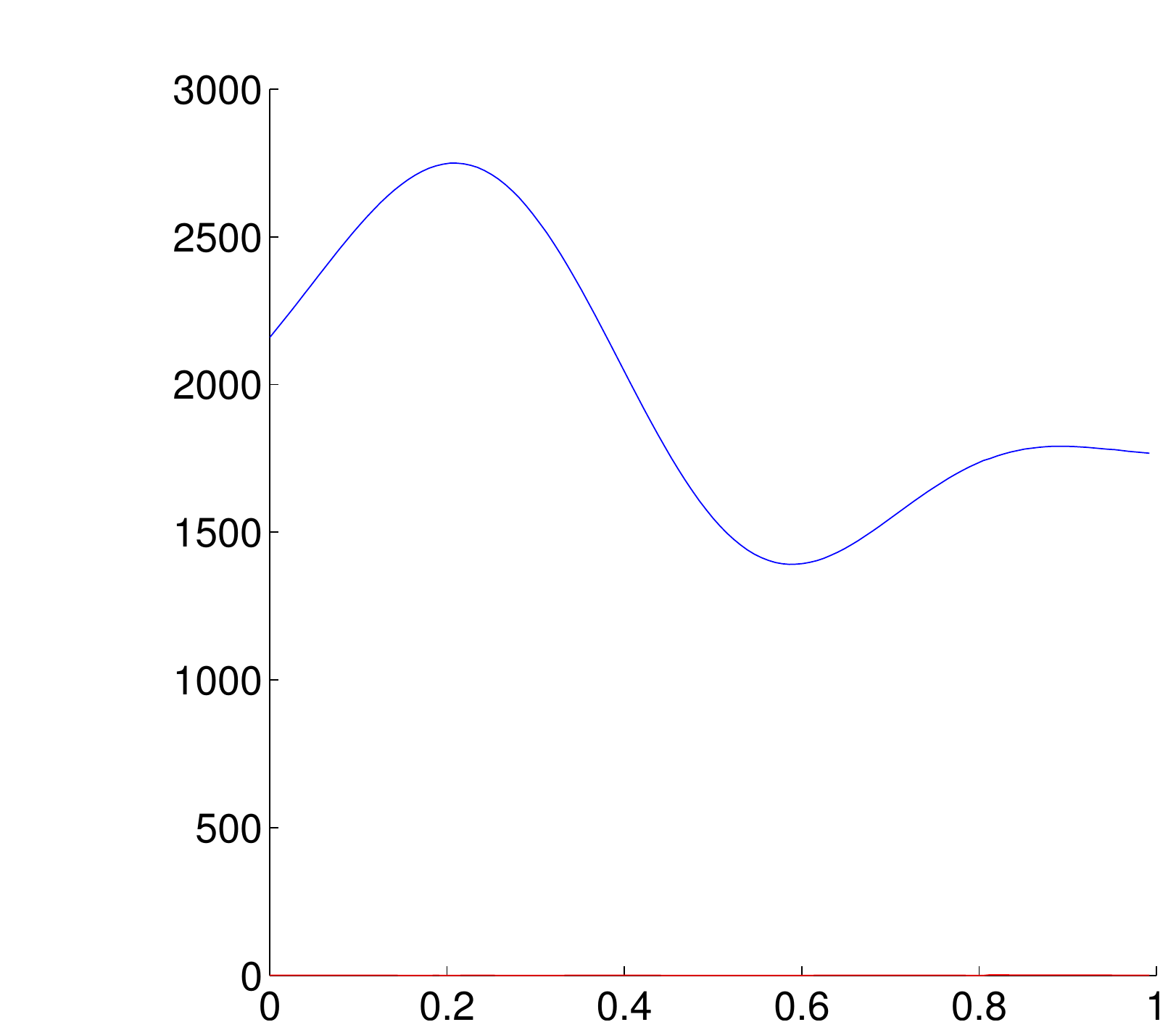}  & \includegraphics[height=1.3in]{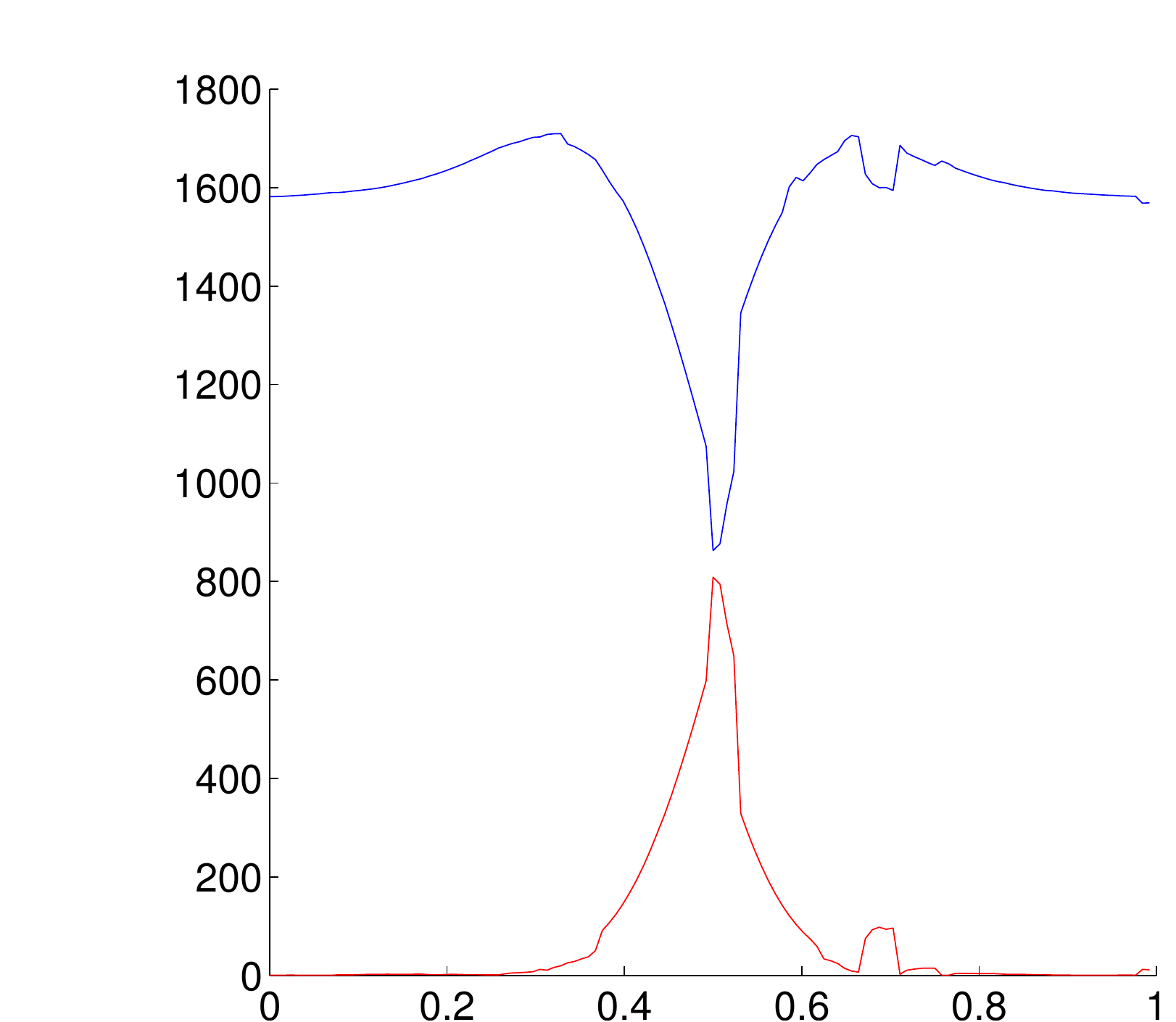} & \includegraphics[height=1.3in]{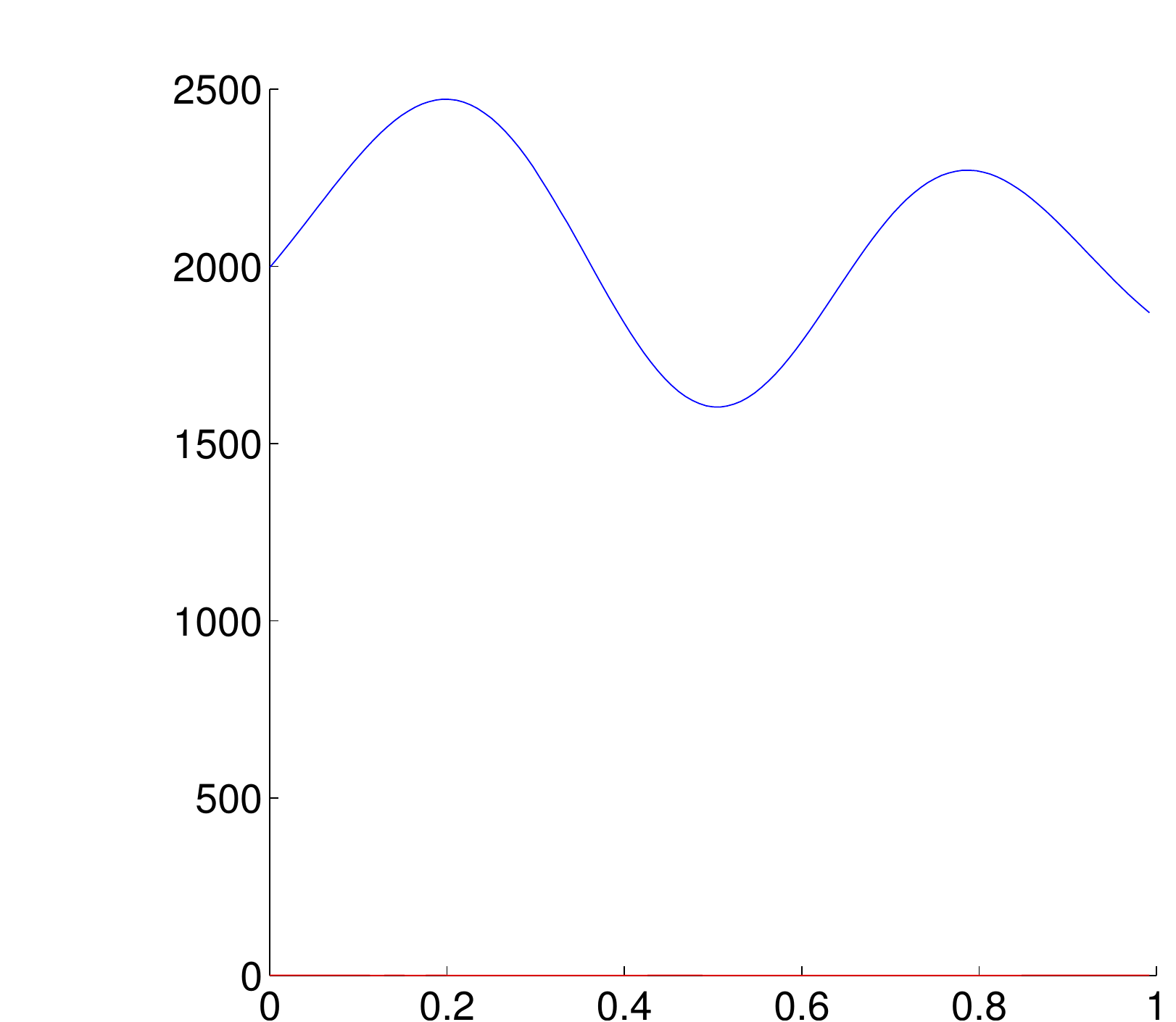} \\  \includegraphics[height=1.3in]{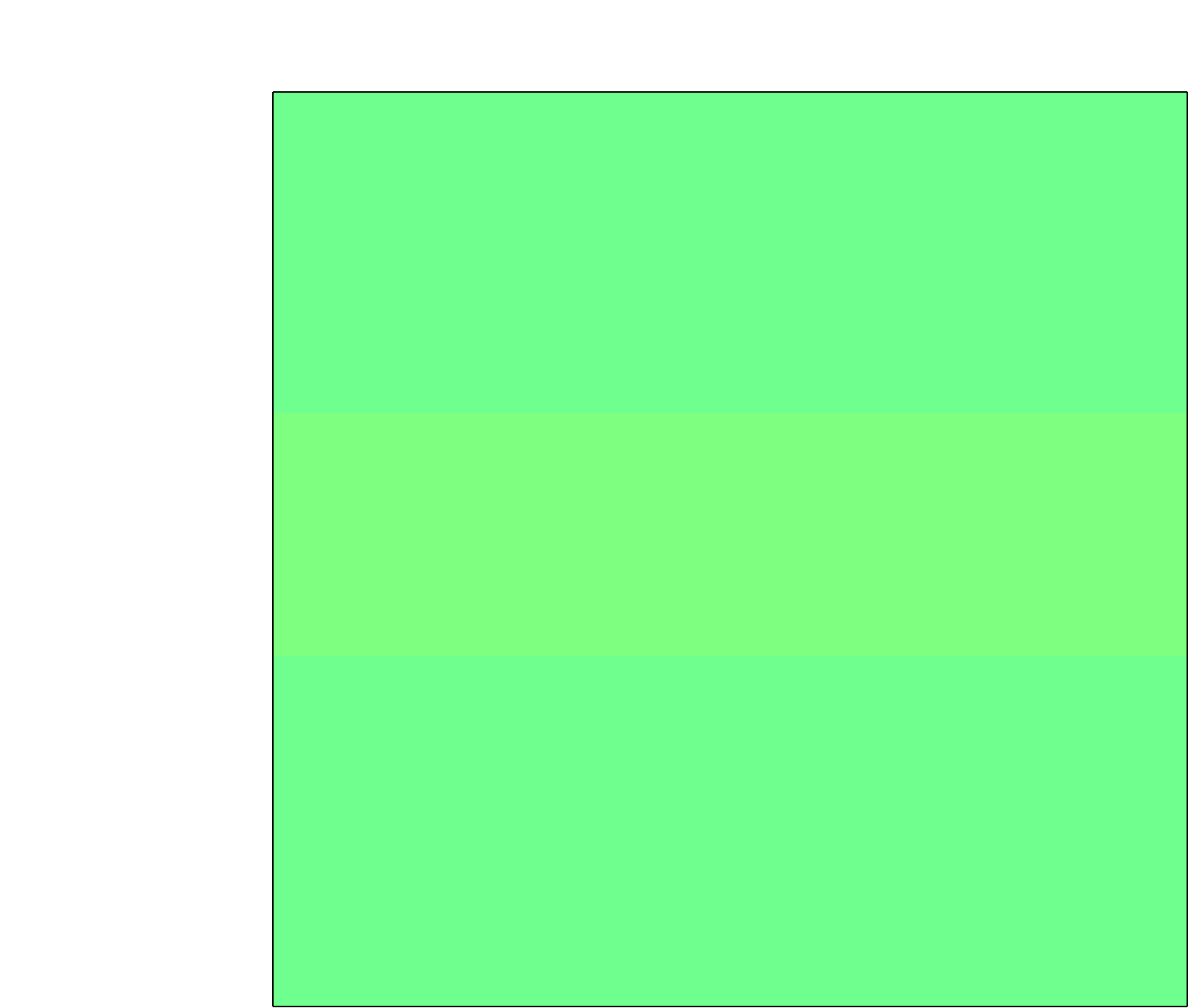} &\includegraphics[height=1.3in]{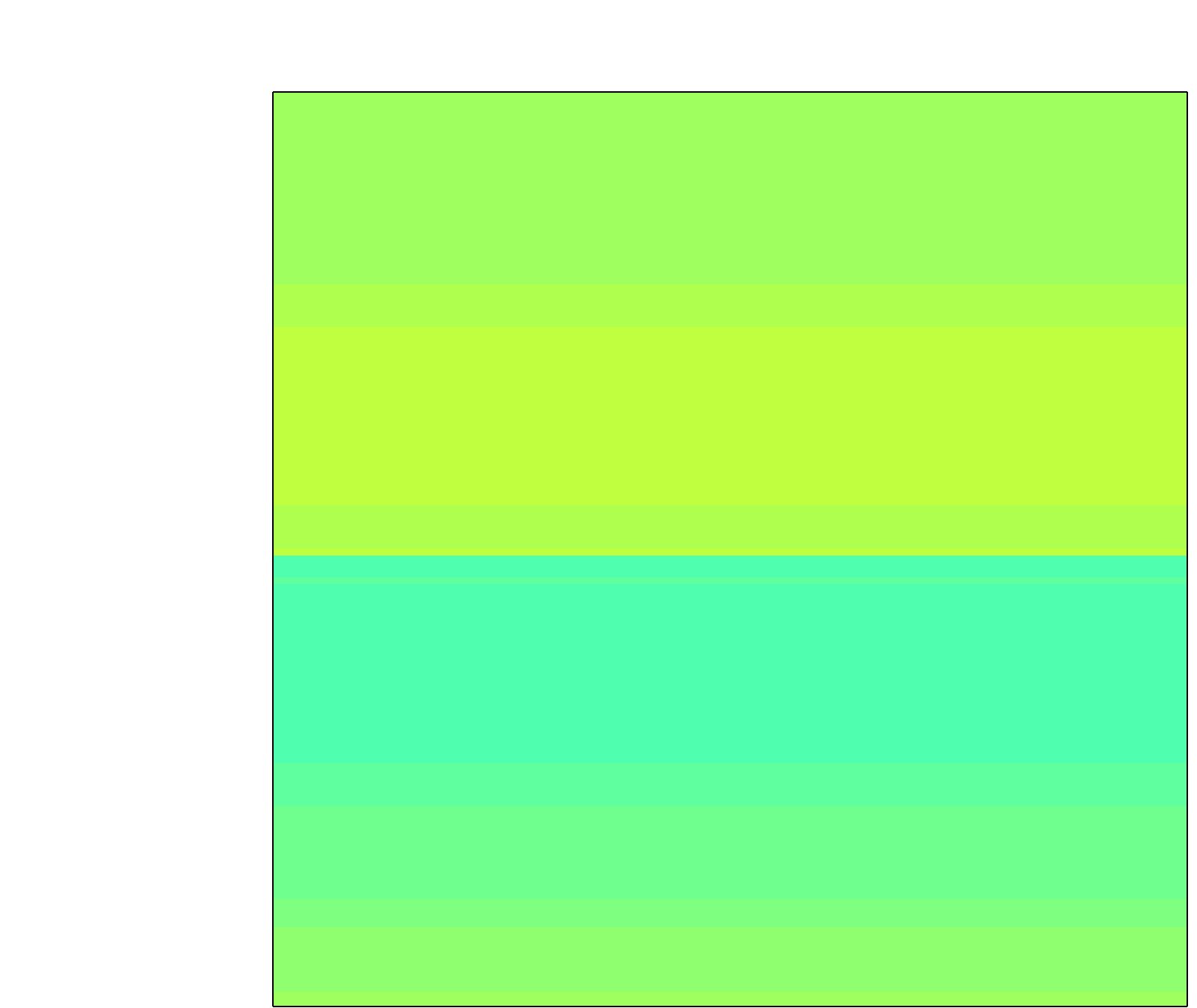}  &\includegraphics[height=1.3in]{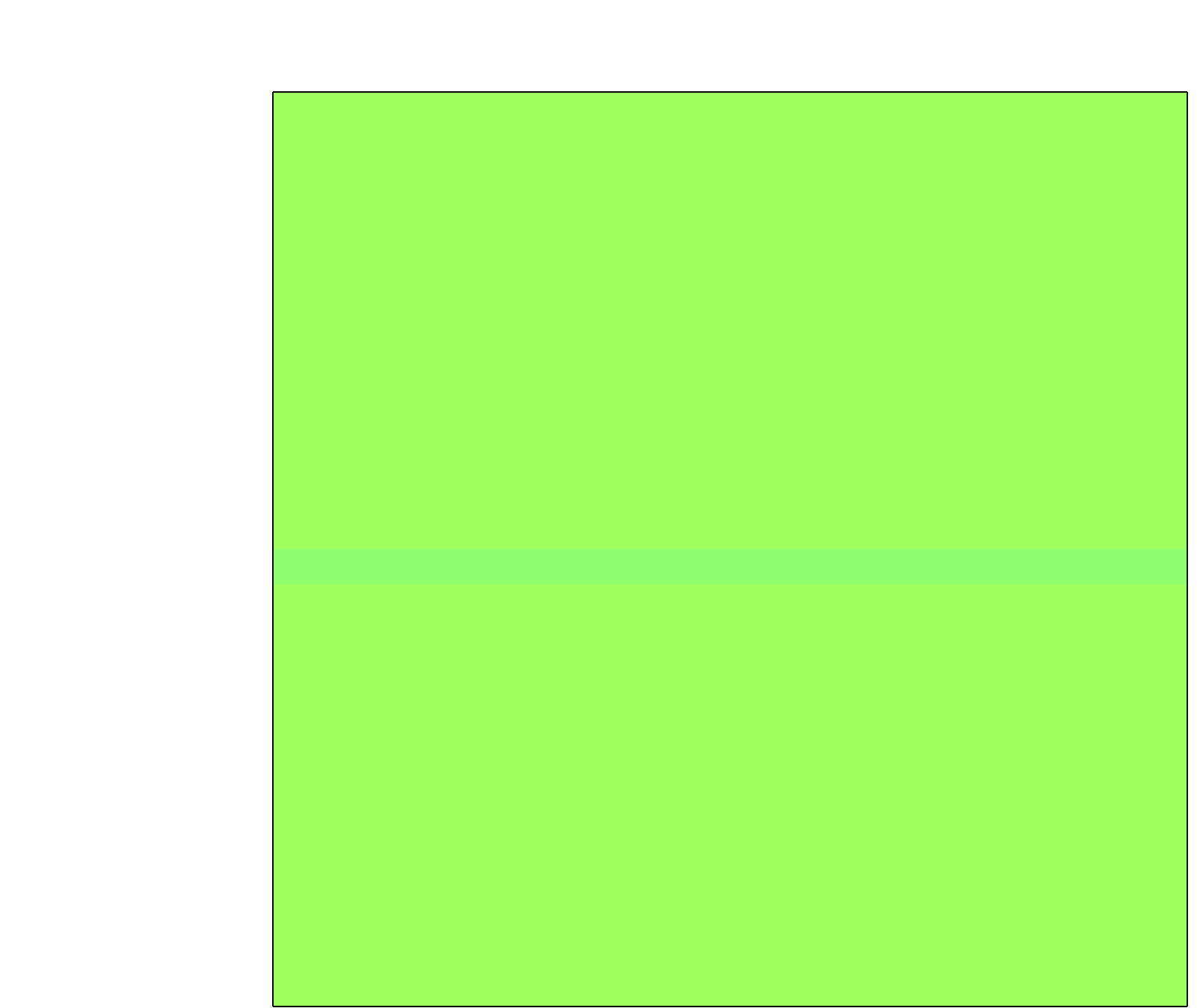} 
  \end{tabular}
\includegraphics[height=1.4in]{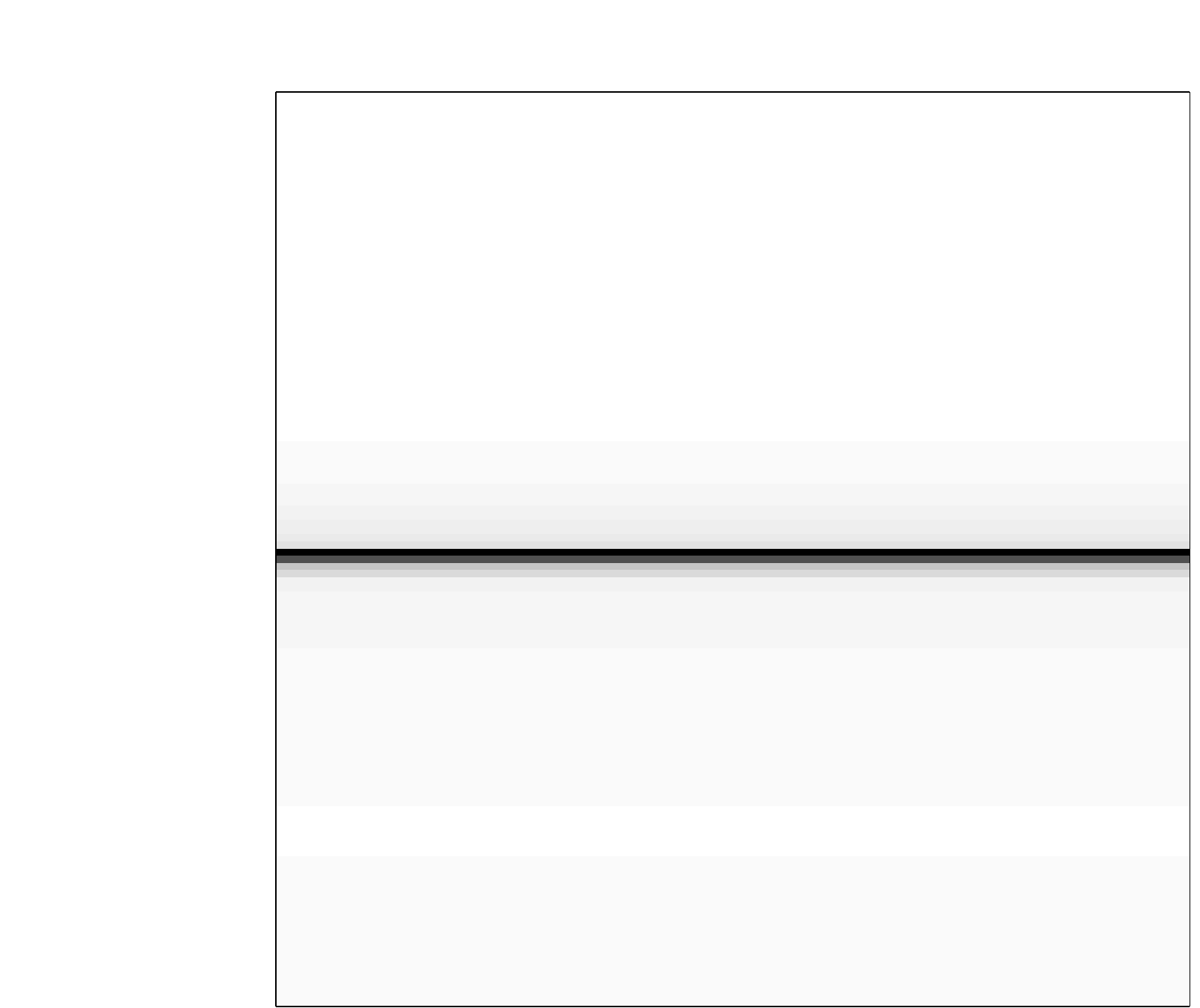} \qquad 
  \includegraphics[height=1.4in]{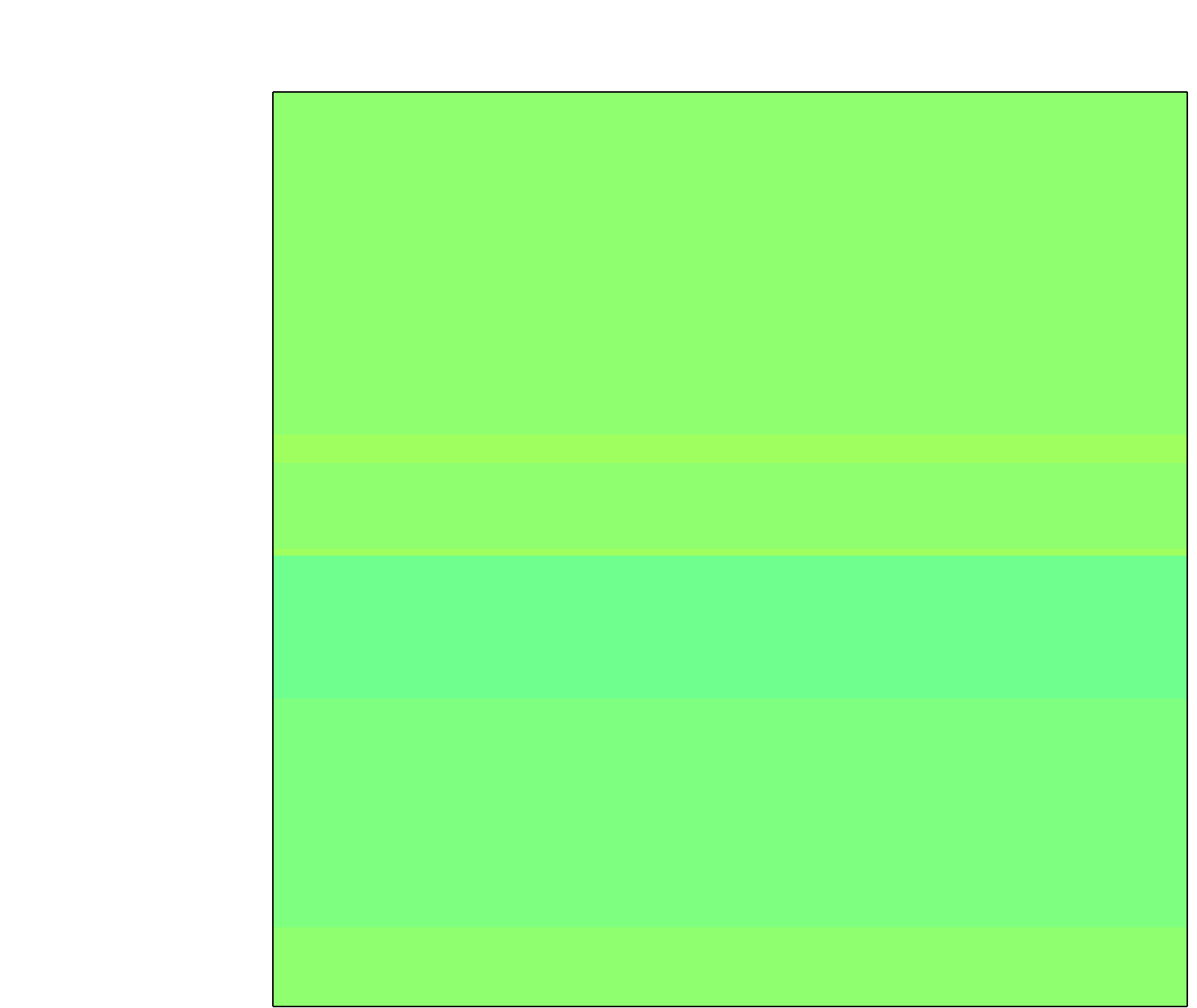}
  \end{center}
  \caption{Top: $\TE_{1j}(b)$ (in blue) and $\TE_{2j}(b)$ (in red) of Figure~\ref{fig:GB5GB8} (left) for fixed $b_1=0.5$ and $j=0,1,2$ from left to right, respectively. Center: The weighted average angle functions $\Angle_j(b)$ provided by each local wave vector for $j=0,1,2$, respectively.
Bottom left: the weighted boundary indicator function $\BD(b)$. Bottom right: 
the weighted average angle function $\Angle(b)$.}
  \label{fig:GB5result}
\end{figure}

\begin{figure}[ht!]
  \begin{center}
    \begin{tabular}{lll}
\includegraphics[height=1.3in]{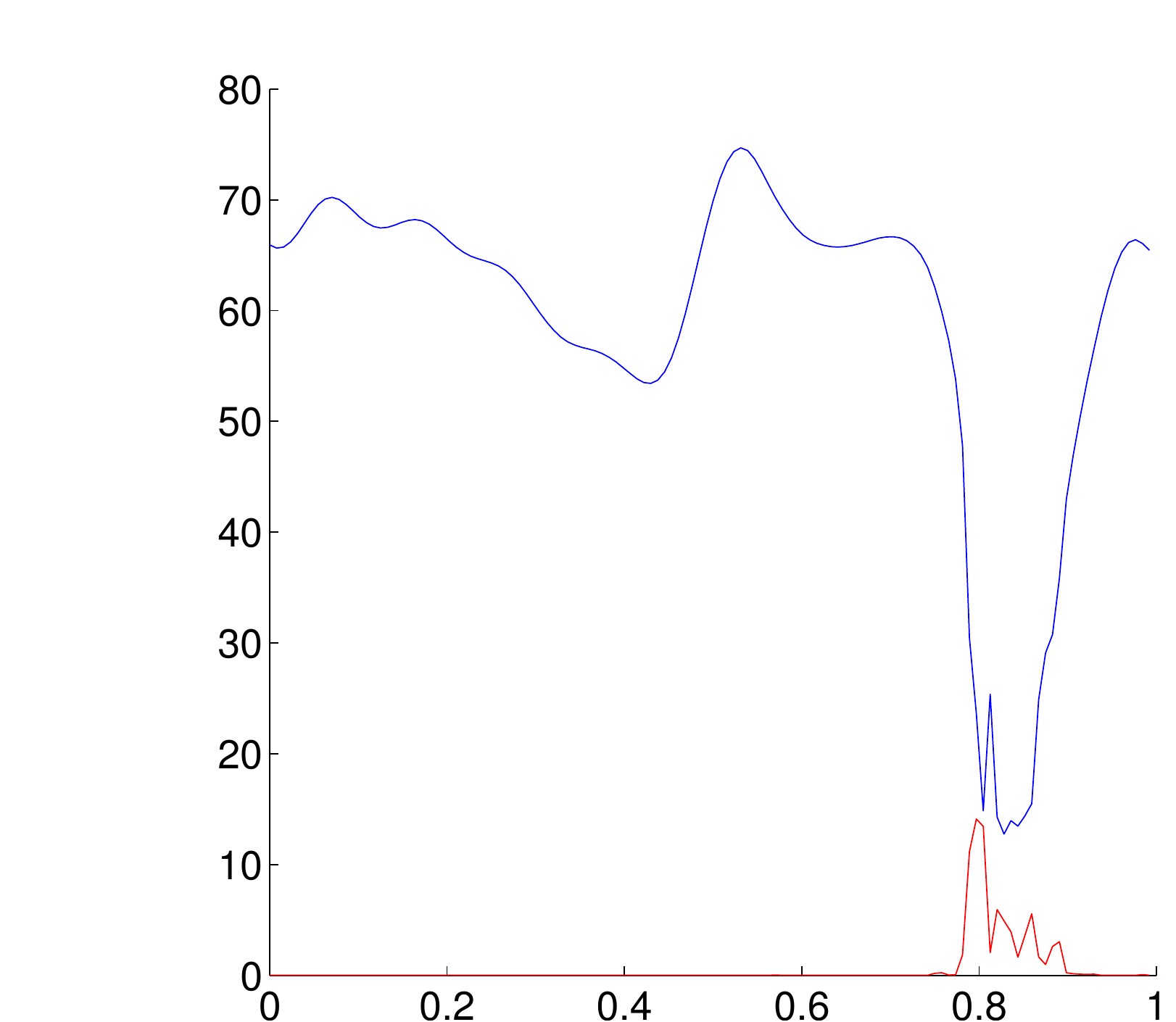} &  \includegraphics[height=1.3in]{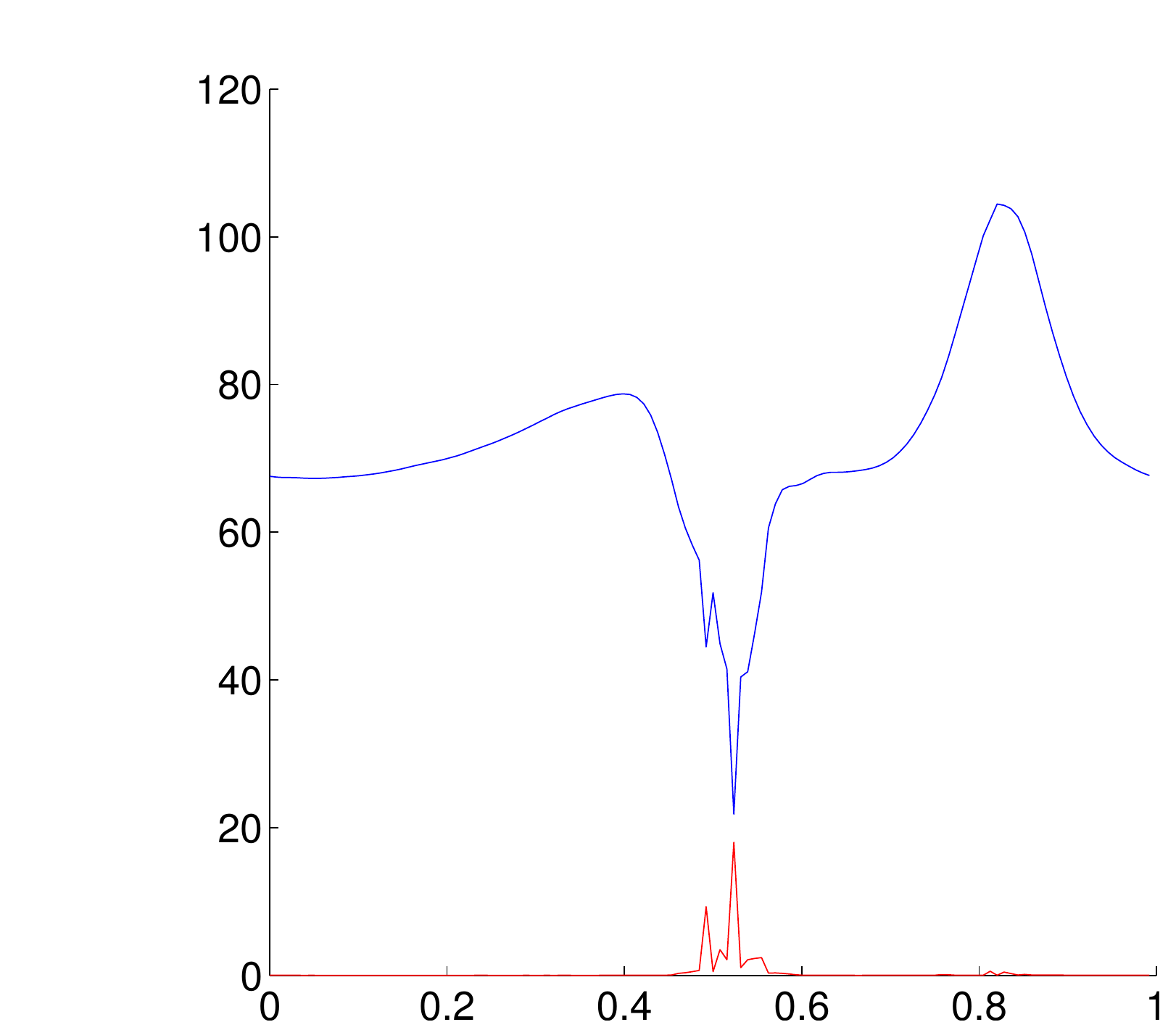}&  \includegraphics[height=1.3in]{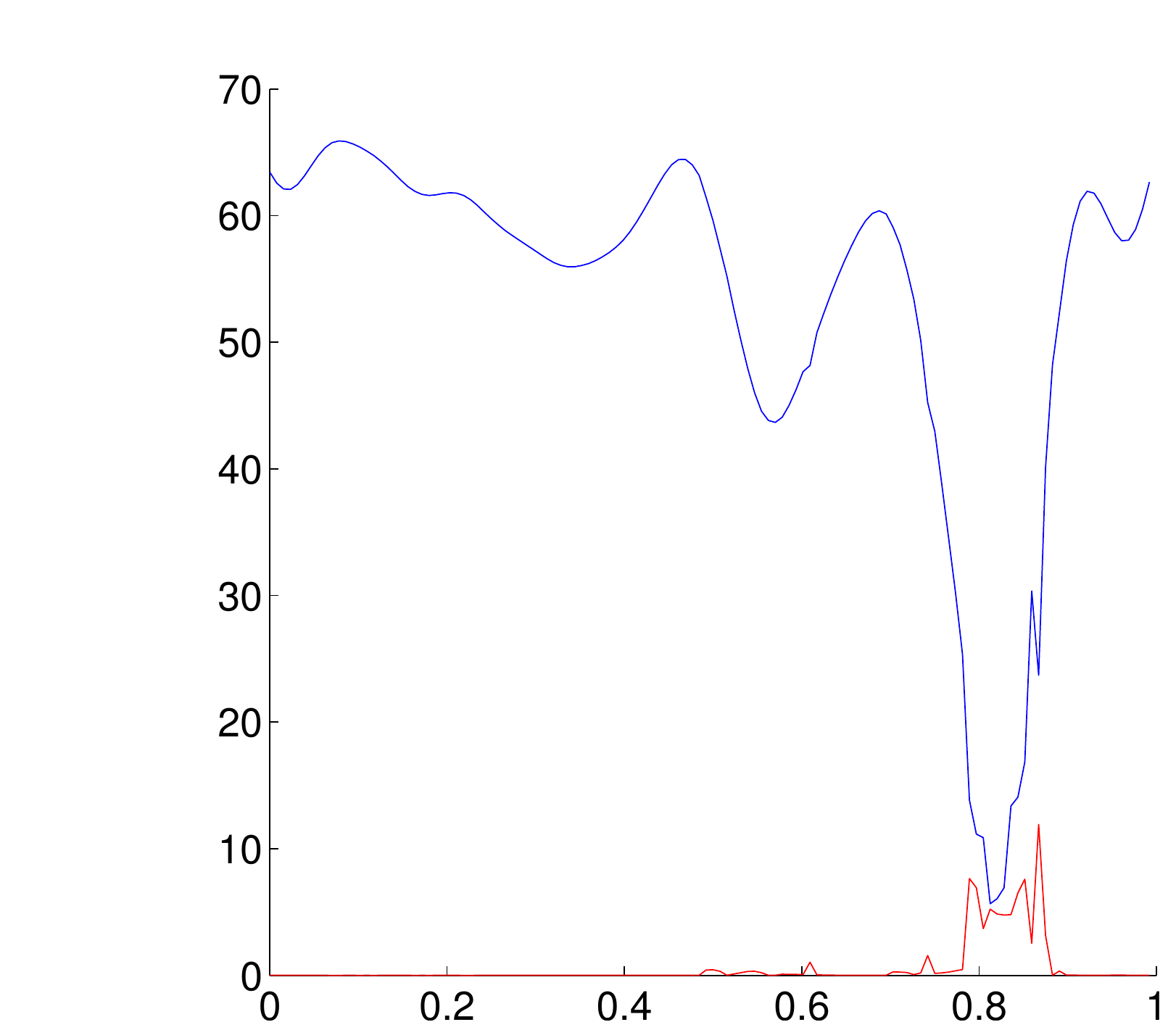} \\
  \includegraphics[height=1.3in]{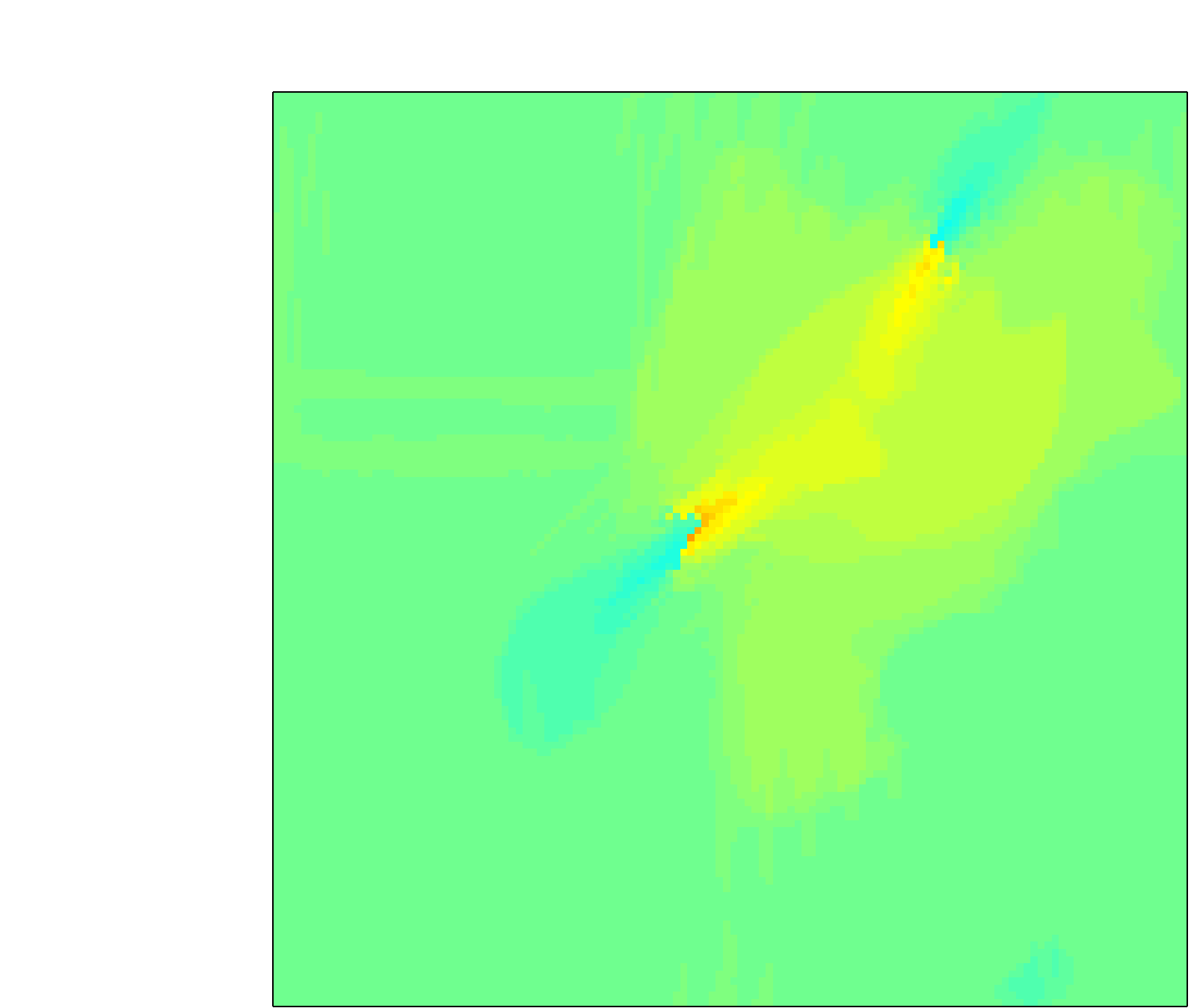} &  \includegraphics[height=1.3in]{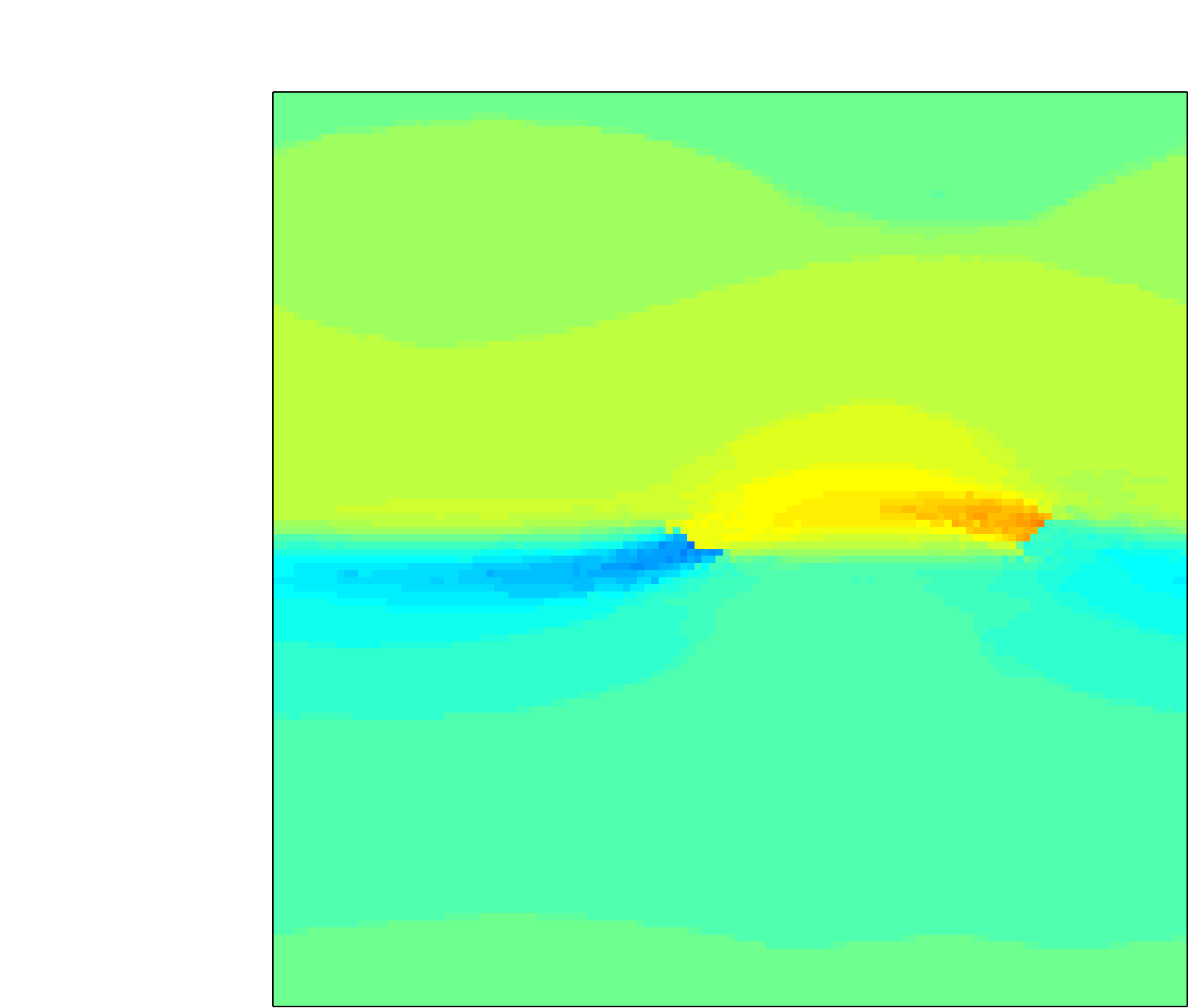}&  \includegraphics[height=1.3in]{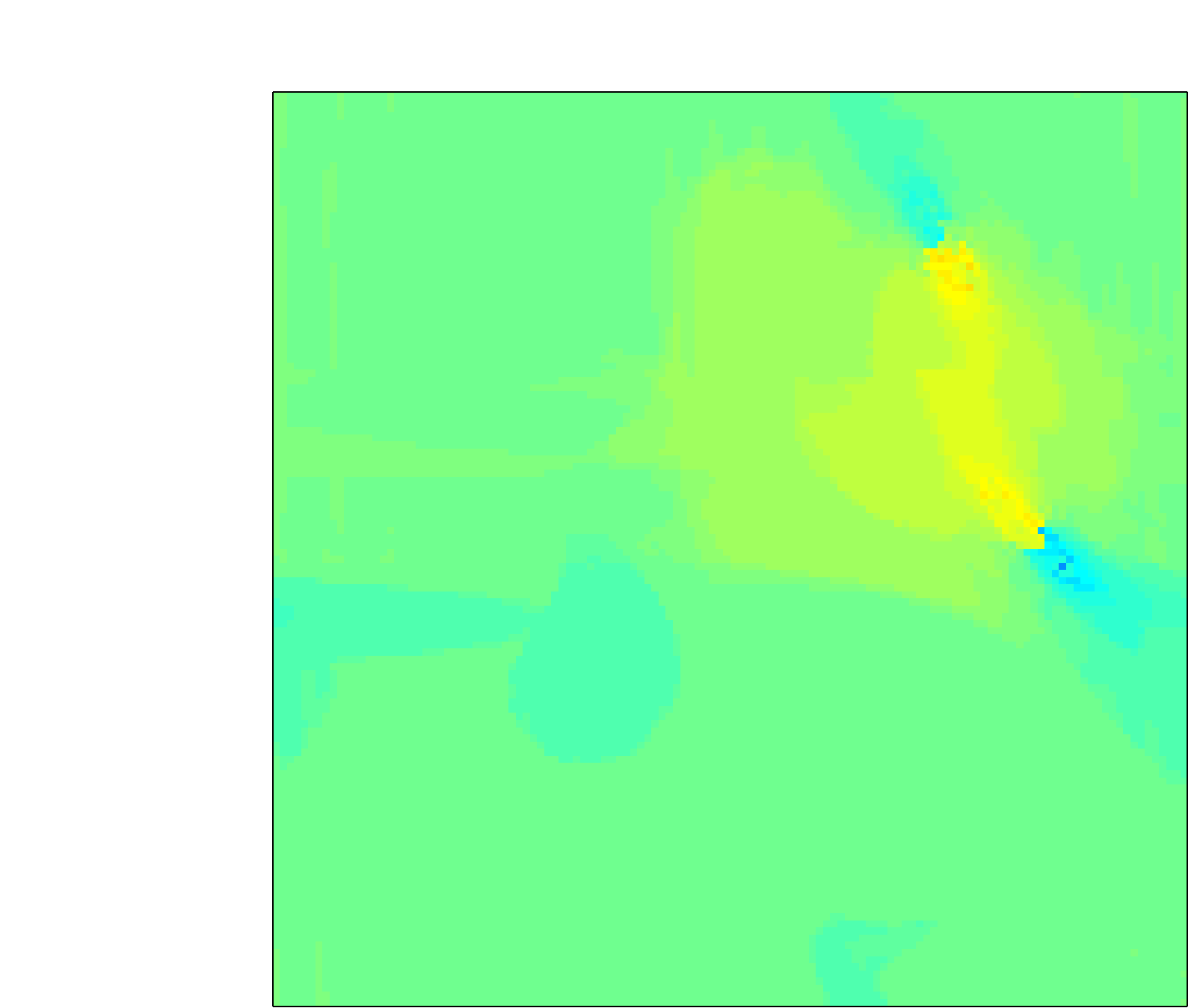} 
  \end{tabular}
\includegraphics[height=1.4in]{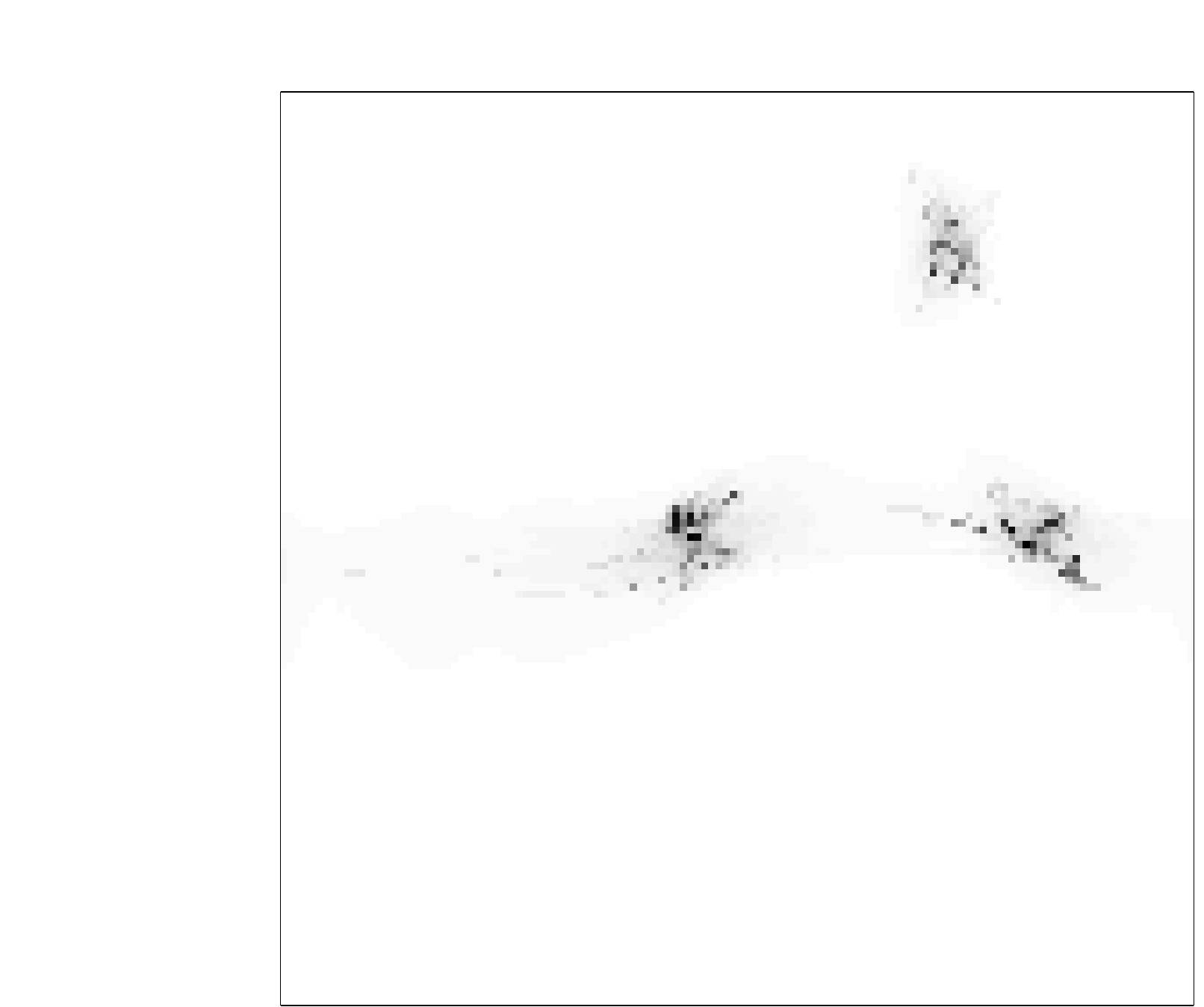} \qquad \includegraphics[height=1.4in]{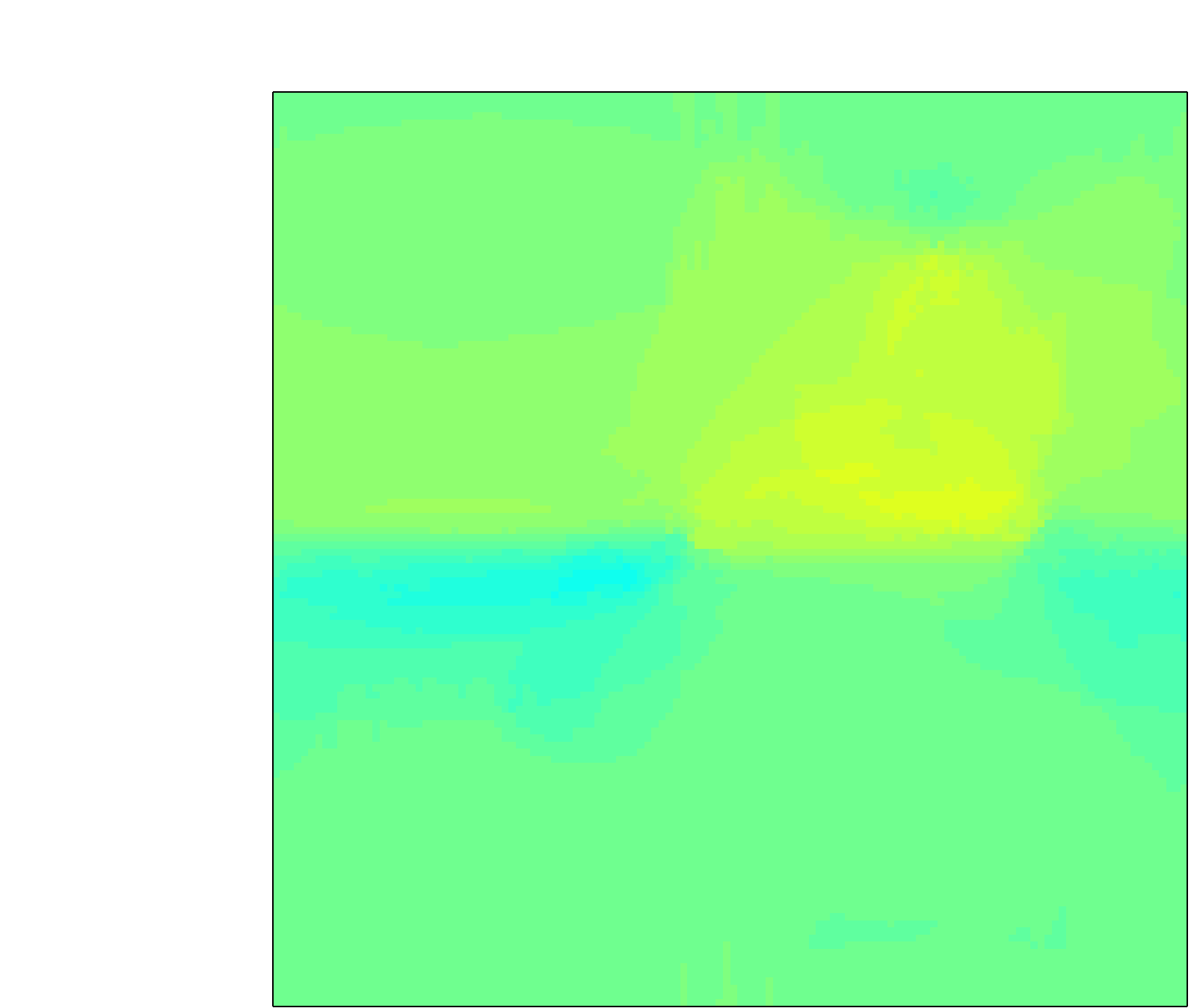}
  \end{center}
  \caption{Top: $\TE_{1j}(b)$ (in blue) and $\TE_{2j}(b)$ (in red) of Figure~\ref{fig:GB5GB8} (right) for  fixed $b_1=0.8$ and $j=0,1,2$, respectively. Center: The weighted average angle functions $\Angle_j(b)$ provided by each local wave vector for $j=0,1,2$, respectively.
Bottom left: the weighted boundary indicator function $\BD(b)$. Bottom right: the weighted average angle function $\Angle(b)$.}
  \label{fig:GB8result}
\end{figure}

In the example in Figure \ref{fig:GB5GB8} (left), only the second local wave vector exhibits irregularity at the small angle boundary, resulting in a sharp decrease of $\TE_{11}(b)$ and a sharp increase of $\TE_{21}(b)$ at the boundary. Hence, as shown in Figure \ref{fig:GB5result}, the weighted boundary indicator function $\BD(b)$ with $\BD_1(b)$ as a key integrand can clearly indicate the small angle boundary. As Figure \ref{fig:GB8result} shows, the top point dislocation in the example in Figure \ref{fig:GB5GB8} (right) interrupts the first and third underlying wave-like component, resulting in a sharp decrease in $\TE_{10}(b)$ and $\TE_{12}(b)$ and a sharp increase in $\TE_{20}(b)$ and $\TE_{22}(b)$. Therefore, the weighted boundary indicator function can reveal this point dislocation. The results in Figure \ref{fig:GB5result} and \ref{fig:GB8result} indicates that the information of some local defects is hidden behind some particular local wave vectors. By using the synchrosqueezed energy distribution of each local wave vector individually, more information of local defects can be discovered. 

\subsection{Recovery of local deformation gradient}

In addition to grain boundaries and crystal rotations, a reliable extraction of the elastic deformation of a crystal image is also essential for an efficient material characterization. Instead of estimating the elastic deformation $\phi(x)$ directly, we would emphasize how to recover the local deformation gradient 
\[
\grad\phi(x)=\left( \begin{array}{cc}
\partial_{x_1}\phi_1(x) & \partial_{x_2}\phi_1(x)\\
\partial_{x_1}\phi_2(x) & \partial_{x_2}\phi_2(x) 
\end{array}\right),\]
and how to read more information from $\grad\phi(x)$, e.g., directions of Burgers vectors. 

The estimation of the local deformation gradient $\grad\phi(x)$ relies on the complete estimate of at least two local wave vectors $\grad\left(Nn^\TT F\phi(x)\right)$. Let us continue with hexagonal crystal images as an example. In this case, there are six local wave vectors of interest  as discussed in Section \ref{sec:BumpDetection}. By symmetry, it is enough to consider those in the upper half plane of the Fourier domain. They are 
\[
v_j(x)=\left(\grad\phi(x)\right)^\TT \left(N\cos(\frac{j\pi}{3}),N\sin(\frac{j\pi}{3})\right)^\TT,\quad j=0,1,2.
\]

In Algorithm~$\ref{alg:deformed}$, the argument of each $v_j(x)$ has been estimated by the weighted average angle $\Angle_j(b)$. Similarly, one more step for applying the bump detection algorithm radially can provide a fast estimate of the length of $v_j(x)$. Suppose $\wt{v}_j(x)$ is an estimate of $v_j(x)$ obtained by the fast algorithms above, then we have an over-determined linear system at each $x$
\[
\wt{v}_j(x)\approx\left(\grad\phi(x)\right)^\TT \left(N\cos(\frac{j\pi}{3}),N\sin(\frac{j\pi}{3})\right)^\TT,\quad j=0,1,2.
\]
Notice that the reciprocal number $N$ can be estimated by $\arg\max E(r)$, where $E(r)$ is the radially average Fourier power spectrum of the given crystal image. Hence, a least square method is sufficient to provide a good estimate $\grad\wt{\phi}(x)$ of the local deformation gradient $\grad\phi(x)$. 

As an example of reading information from the local deformation gradient $\grad\phi(x)$, a local volume distortion estimate of $\det\left(\grad\phi(x)\right)-1$ is computed by
\[
\Vol(x) = \det\left(\frac{\grad\wt{\phi}(x)}{\int_{\Omega}\det\left(\grad\wt{\phi}(x)\right)\ud x/\left|\Omega\right|}\right)-1,
\]
where $\Omega$ is the domain of the crystal image and $\left|\Omega\right|$ denotes its area. The normalization reduces the influence of the estimate error of $N$. In the presence of a lattice distortion of a dislocation, a Burgers vector is introduce to represent this distortion. On one side of a Burgers vector, the space between atoms is slightly compressed, while the space on the other side is expanded. Hence, the local volume distortion would be positive on one side of the Burgers vector and negative on the other side. Suppose $\Vol(x_1)$ and $\Vol(x_2)$ are the maximum and the minimum of $\Vol(x)$ in a local region near the dislocation. Then $\Vol(x_1)-\Vol(x_2)$ reflects the length of the Burgers vector and the vector $x_1-x_2$ is orthogonal to the Burgers vector. 

Figure \ref{fig:GB8vol} shows the local distortion volume estimate $\Vol(x)$ of the example in Figure~\ref{fig:GB5GB8} (right). $\Vol(x)$ reaches its local maximum and local minimum near each point dislocation and the direction of the corresponding Burgers vector can be directly read off from the color coding of $\Vol(x)$. As a remark, $\Vol(x)$ is not tightly supported around local dislocations in this example. This is because the synchrosqueezed transforms cannot be too localized in space and the estimate of $\grad\phi(x)$ is polluted by the information at the other points nearby. We leave further regularization of these estimates to future works.

\begin{figure}[ht!]
  \begin{center}
    \begin{tabular}{c}
      \includegraphics[height=1.6in]{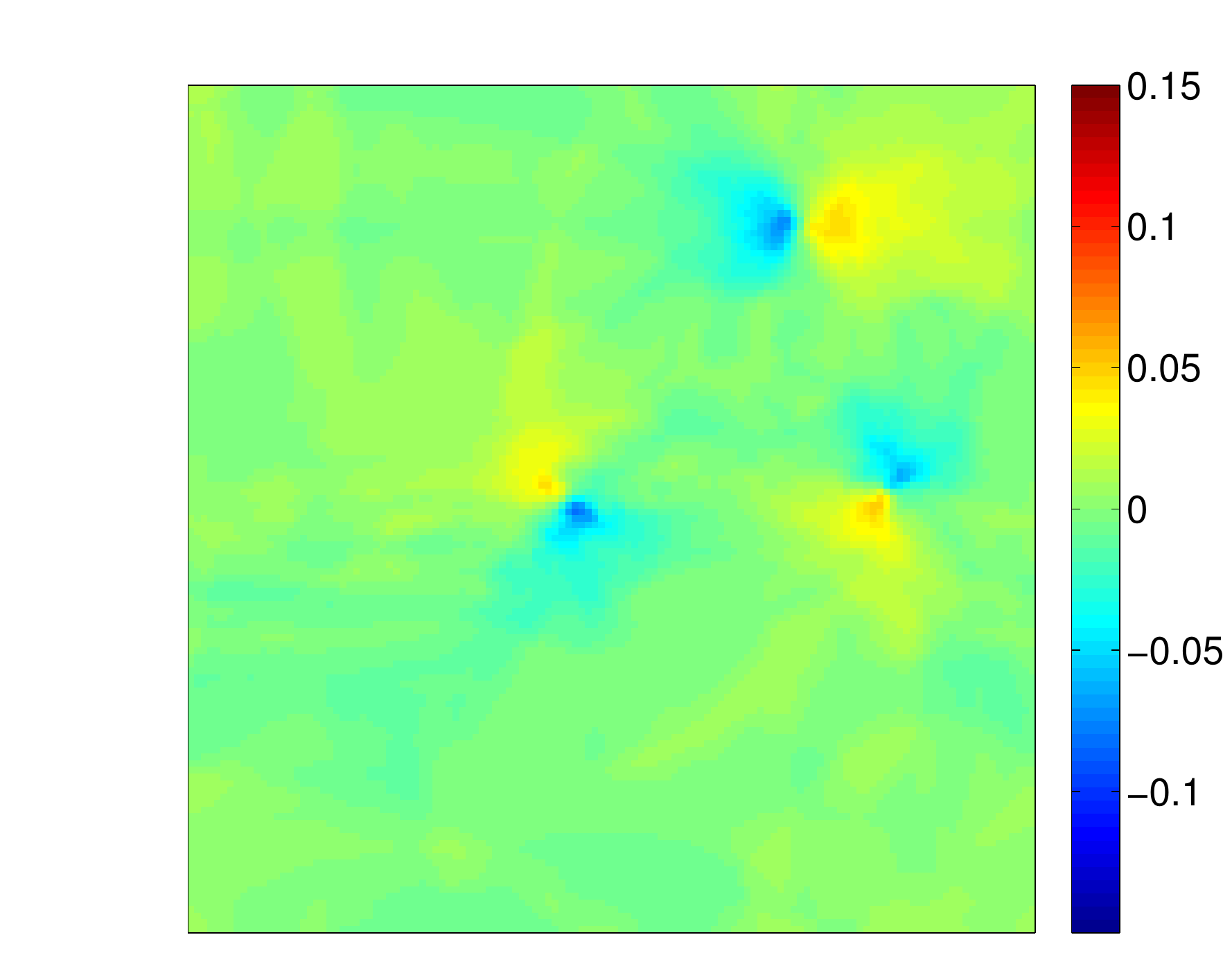} 
    \end{tabular}
  \end{center}
  \caption{The local distortion volume $\Vol(x)$ of the example in Figure~\ref{fig:GB5GB8} (right).  To make the image more readable, the color limits of these two images are set to be $0.15$ and $-0.15$ as the maximum and the minimum, respectively.}
  \label{fig:GB8vol}
\end{figure}

\section{Examples and discussions}
\label{sec:examples}

This section contains a series of experiments of synthetic and real images to illustrate the performance of Algorithm $\ref{alg:undeformed}$ and $\ref{alg:deformed}$. In the first part of this section, we focus on the application of Algorithm $\ref{alg:undeformed}$ to detect grain boundaries and to estimate crystal rotations. The robustness of this method will be emphasized and supported by some noisy examples and examples with blurry grain boundaries. In the second part, Algorithm $\ref{alg:deformed}$ is applied to two more examples with different types of point dislocations. Algorithm $\ref{alg:deformed}$ is more sensitive to local defects and is able to discover Burgers vectors of these dislocations by estimating the local volume distortion $\Vol(b)$. Throughout all examples, the threshold value $\epsilon$ for the synchrosqueezed transforms is $10^{-4}$ and other primary parameters such as $s$, $t$ and $d$ are application dependent. Generally speaking, large parameters $s$, $t$ and $d$ result in more sensitive synchrosqueezed transforms to local defects, while smaller parameters provide more stable analysis results.

Recall that both Algorithm $\ref{alg:undeformed}$ and $\ref{alg:deformed}$ have a time complexity $O(L^2\log L+N^{1-s}L_B^2\log L_B+L_A L_R L_B^2)$. Since the time complexity depends on the reciprocal number $N$ of a given crystal image and $N$ is not known a priori, we would not emphasize the runtime taken to analyze images of different sizes. All the numerical results except for the one in Figure \ref{fig:GB1_elastic} and \ref{fig:GB1vol} take within $10$ seconds using a Matlab code in a MacBook Pro with a 2.7GHz quad-core Intel Core i7 CPU. The result in Figure \ref{fig:GB1_elastic} and \ref{fig:GB1vol} takes about $50$ seconds and the most expensive part is for the bump detection. The time expense will be reduced significantly by straightforward parallelization, though this is not our focus in this paper. The codes of these algorihtms together with some numerical examples are open source and available as \texttt{SynLab} at
\url{https://github.com/HaizhaoYang/SynLab}.

\subsection{Examples for Algorithm~$\ref{alg:undeformed}$}

Our first example is a phase field crystal (PFC)
image in Figure~\ref{fig:GB1_fast} (left). It contains several grains with low
and high angle grain boundaries and some point dislocations. As shown in Figure \ref{fig:GB1_fast} (middle), the weighted
average angle $\Angle(b)$ is changing gradually in the interior of a grain and jumps at a large angle boundary.
Figure \ref{fig:GB1_fast} (right) shows the boundary indicator function $\BD(b)$. Large angle grain boundaries appear in a form of line segments. Adjacent point dislocations are connected and identified as grain boundaries, while well-separated point dislocations are identifed by light grey regions.  As pointed out in Section
\ref{sec:alg}, some isolated point dislocations may be missed due to the stacking step, while Algorithm~$\ref{alg:deformed}$ is better suited for detecting local defects, as will be shown in Figure \ref{fig:GB1_elastic} and Figure \ref{fig:GB3_elastic}.

\begin{figure}[ht!]

  \begin{center}
    \hspace{-4em}
      \includegraphics[height=1.6in]{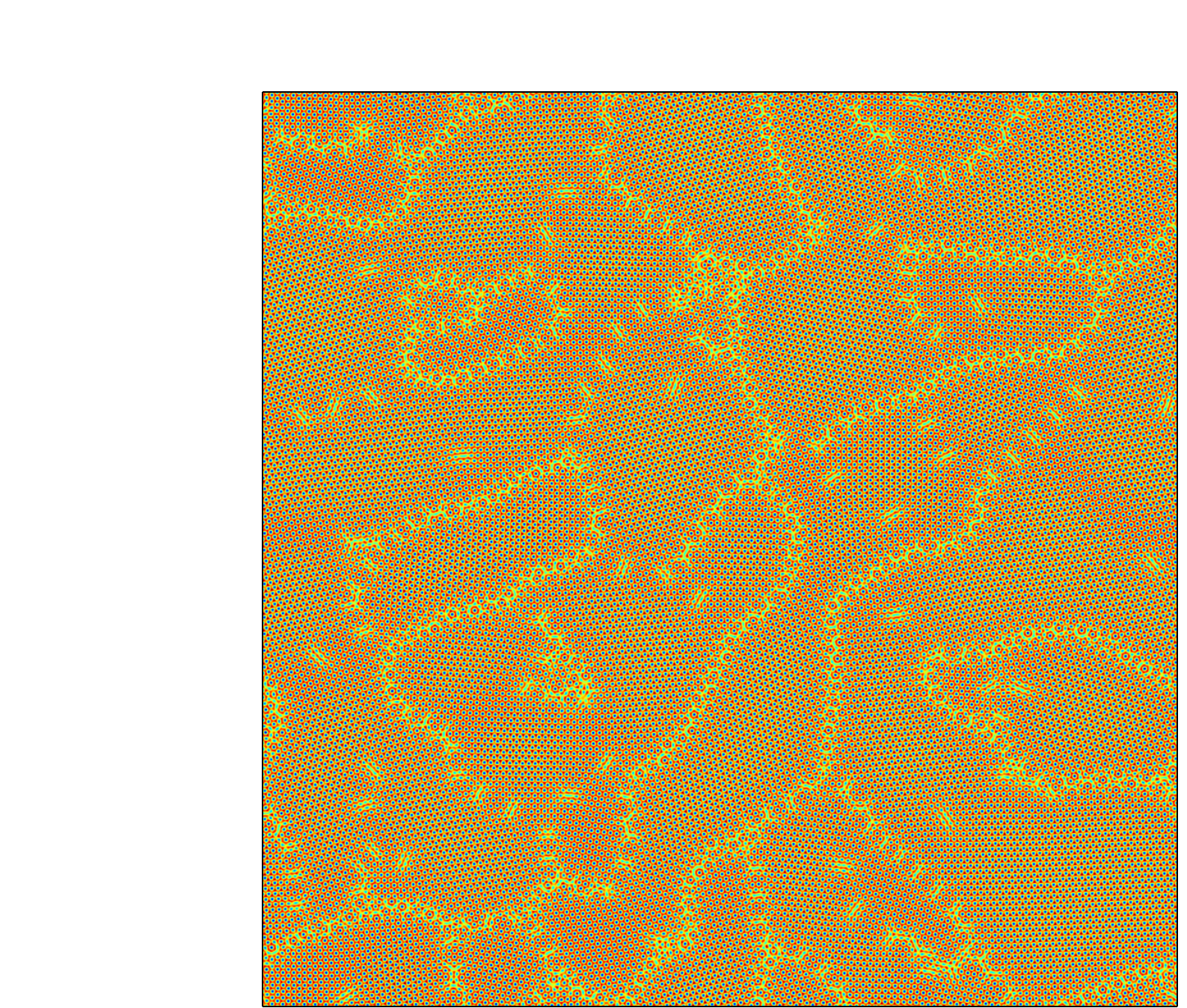}  \includegraphics[height=1.6in]{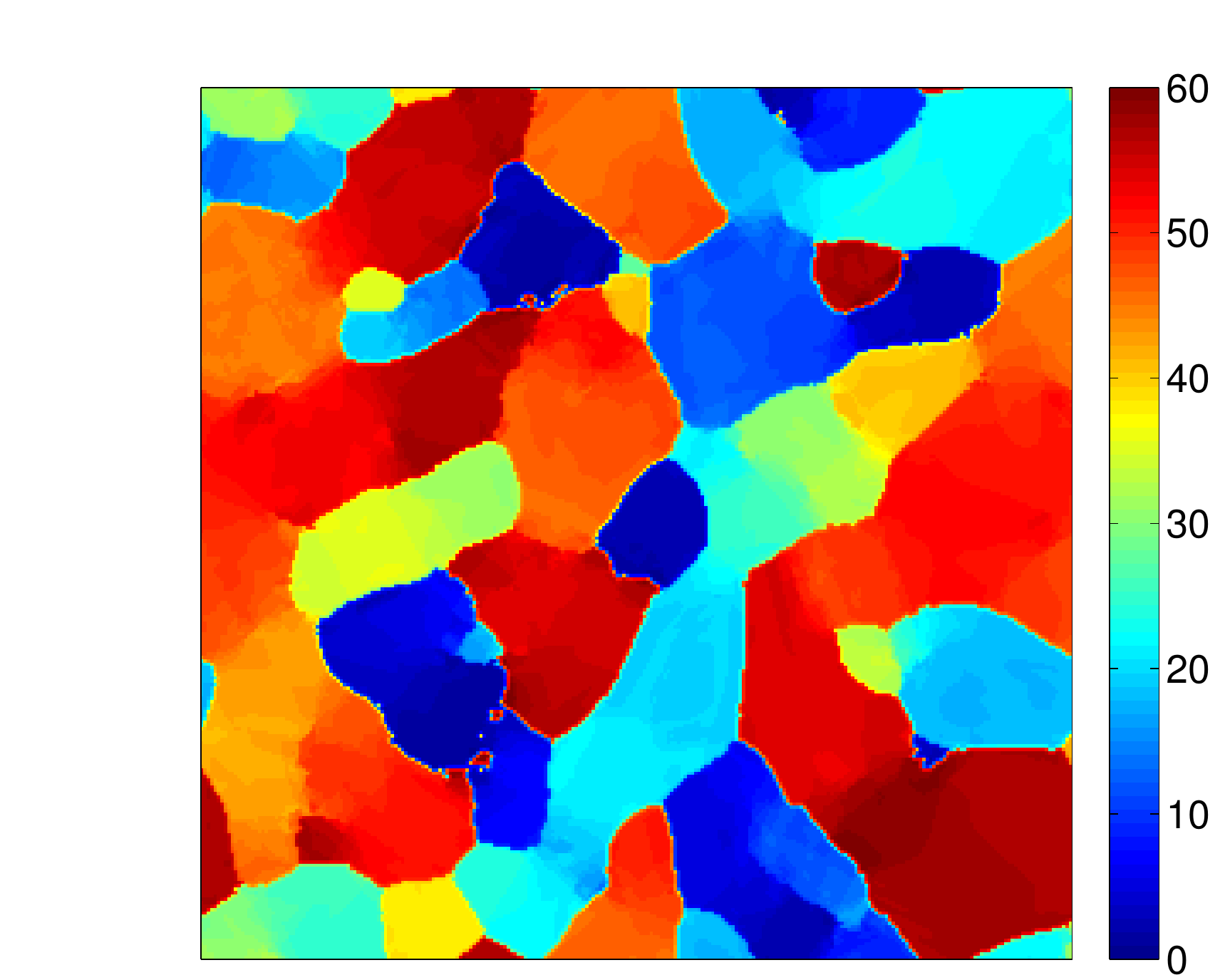}  \includegraphics[height=1.6in]{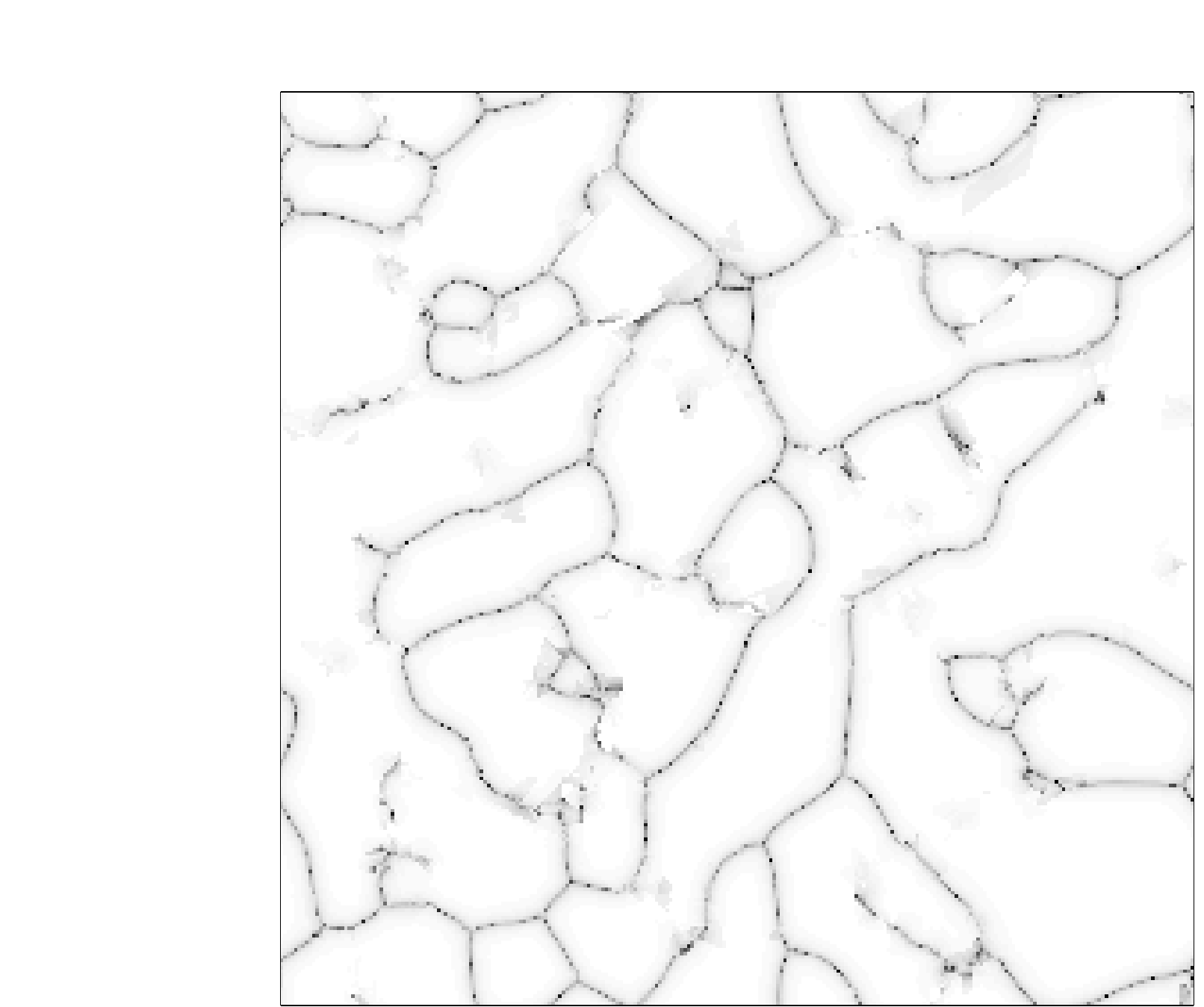}\\
    \hspace{-4em}
      \includegraphics[height=1.6in]{GB1_image_zoomed.pdf}  \includegraphics[height=1.6in]{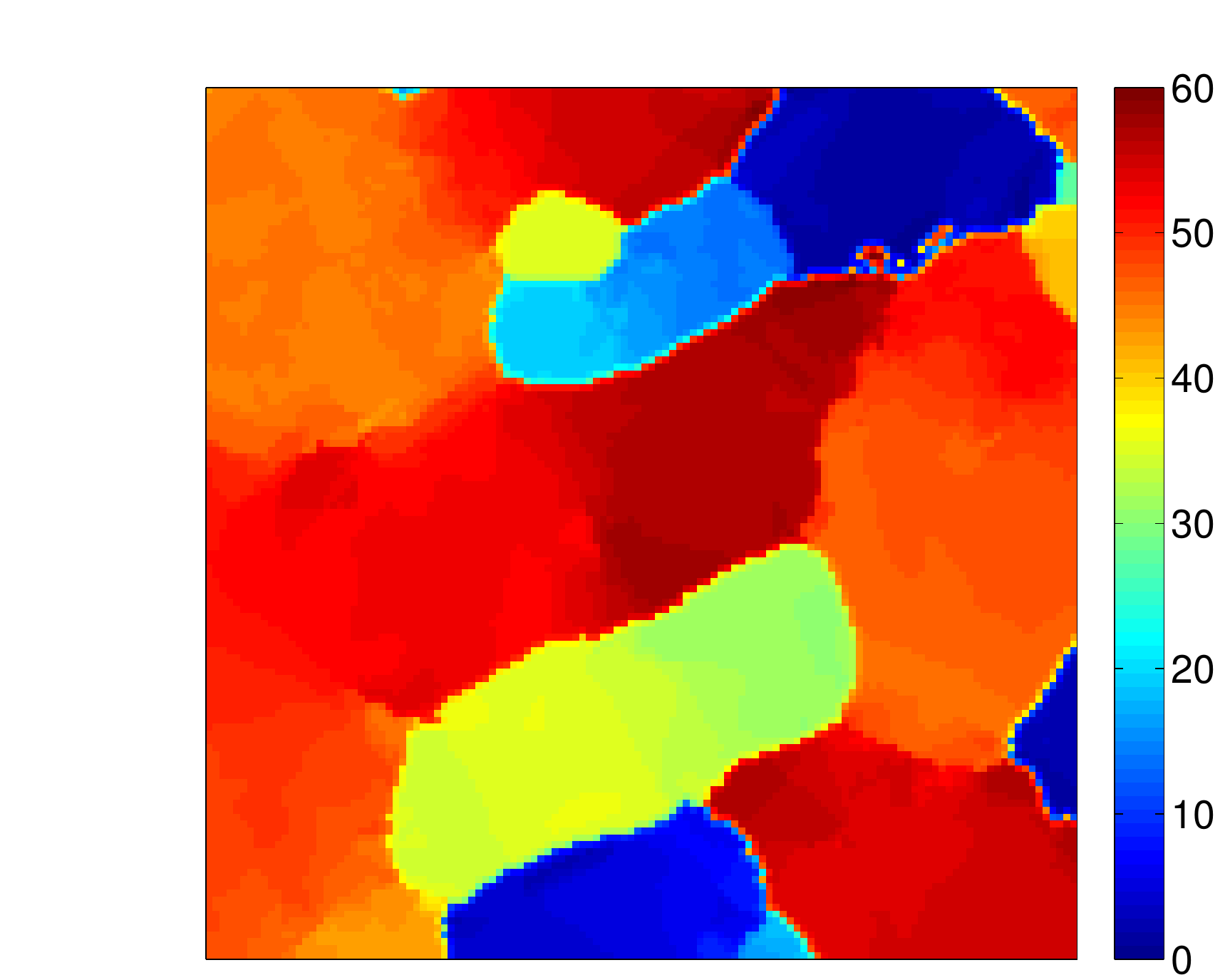}  \includegraphics[height=1.6in]{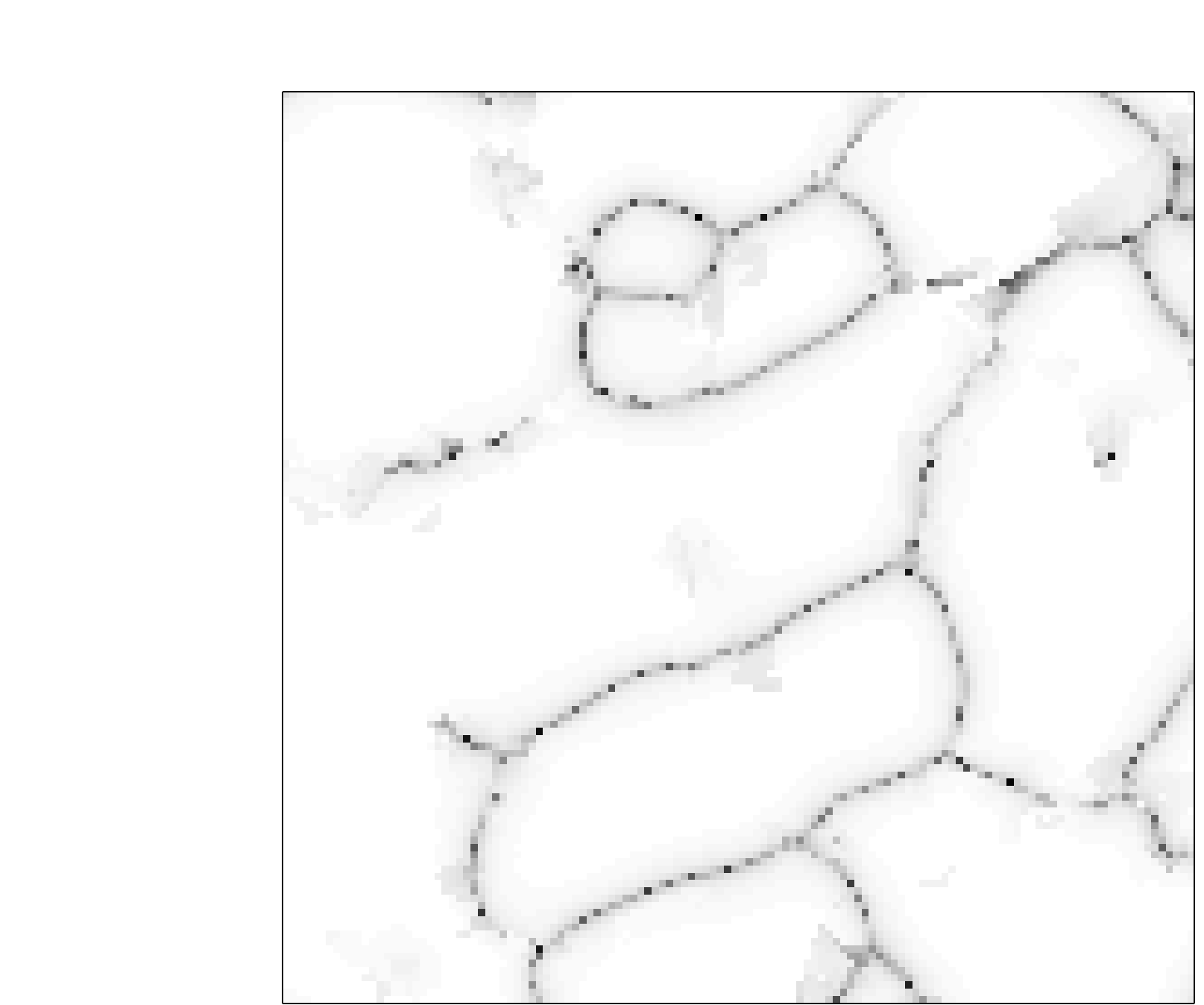}
  \end{center}
  \caption{Analysis results of a large phase field crystal image (of size $1024\times 1024$ pixels) provided by Algorithm $\ref{alg:undeformed}$. The top row is the original image and the bottom row gives the zoomed-in view. Left: A phase field crystal (PFC) image and its zoomed-in image. Courtesy of Benedikt Wirth \cite{ElseyWirth:MMS}. Middle: The weighted average angle $\Angle(b)$ and its zoomed-in result. Right: The boundary indicator function $\BD(b)$ and its zoomed-in result. Primary implementation parameters: $s=t=0.5$, $d=1$.}
  \label{fig:GB1_fast}
\end{figure}

We next consider a real data example of a twin boundary in a TEM-image in GaN (see Figure \ref{fig:GB2_fast} (left)). Algorithm $\ref{alg:undeformed}$ identifies two grains as shown in Figure \ref{fig:GB2_fast} (middle) and estimates their rotation angles. The grain boundary is approximated by a smooth curve in the image of the boundary indicator function $\BD(b)$ in Figure \ref{fig:GB2_fast} (right).

\begin{figure}[ht!]
  \begin{center}
    \hspace{-4em}
      \includegraphics[height=1.6in]{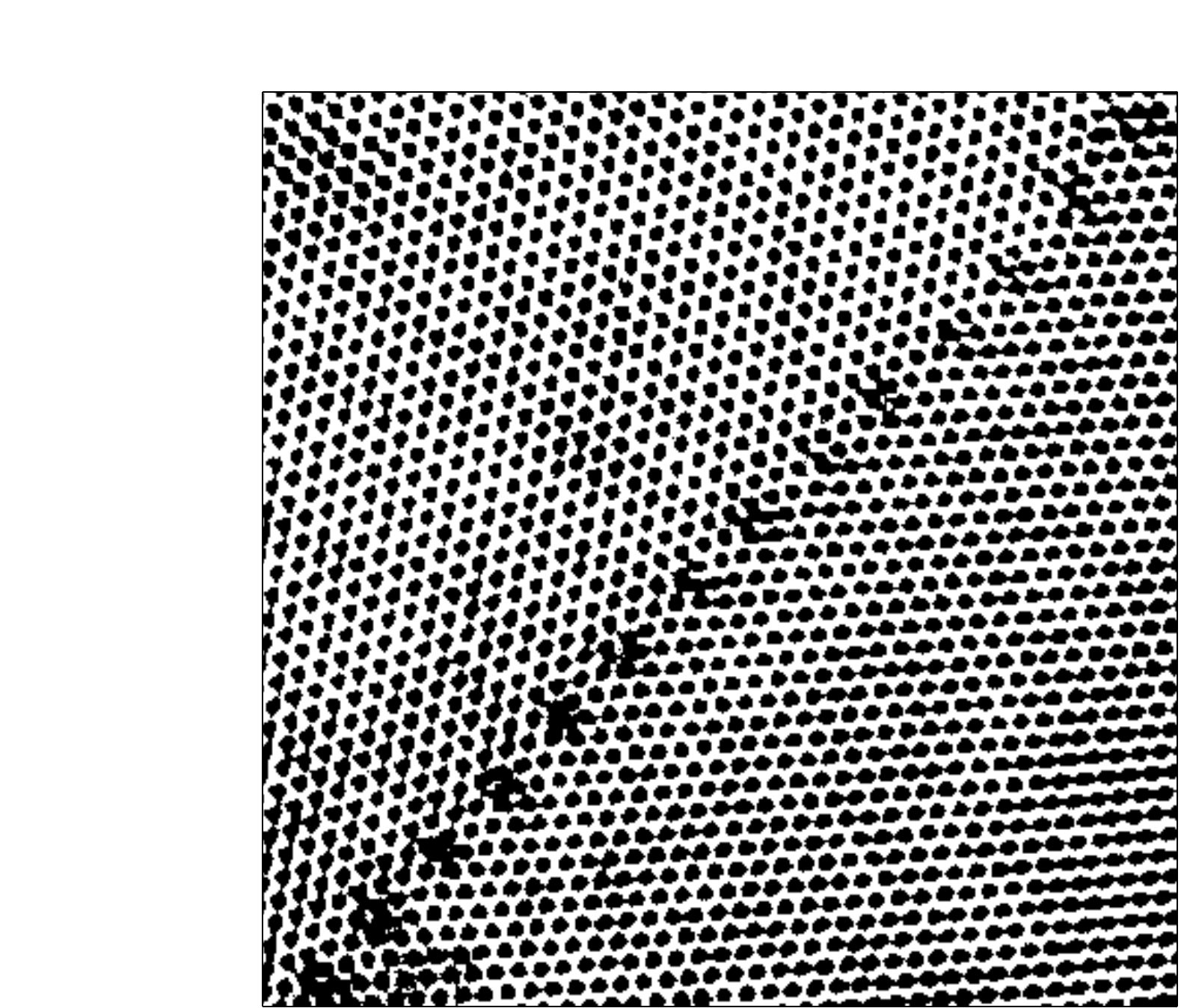}  \includegraphics[height=1.6in]{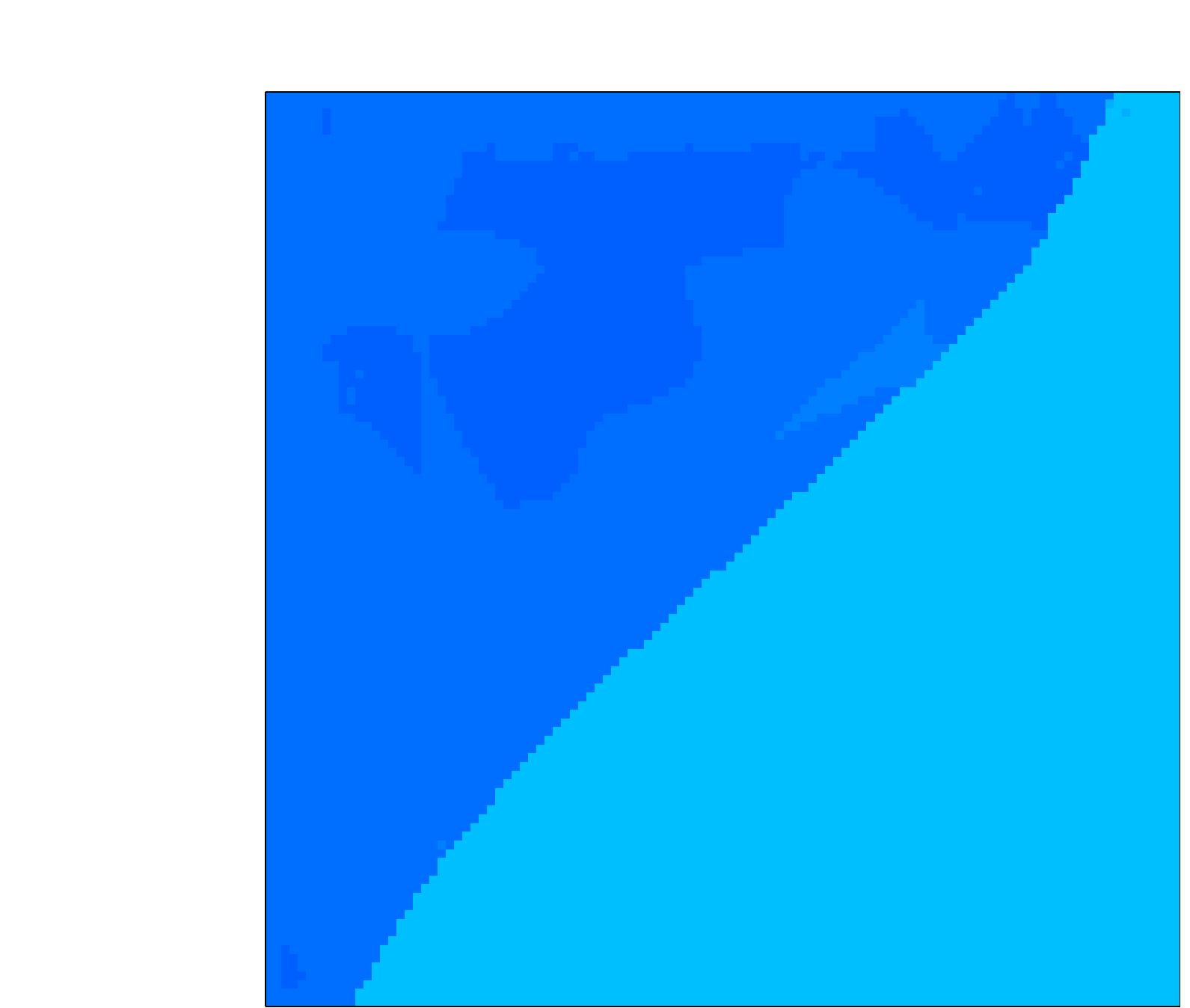}  \includegraphics[height=1.6in]{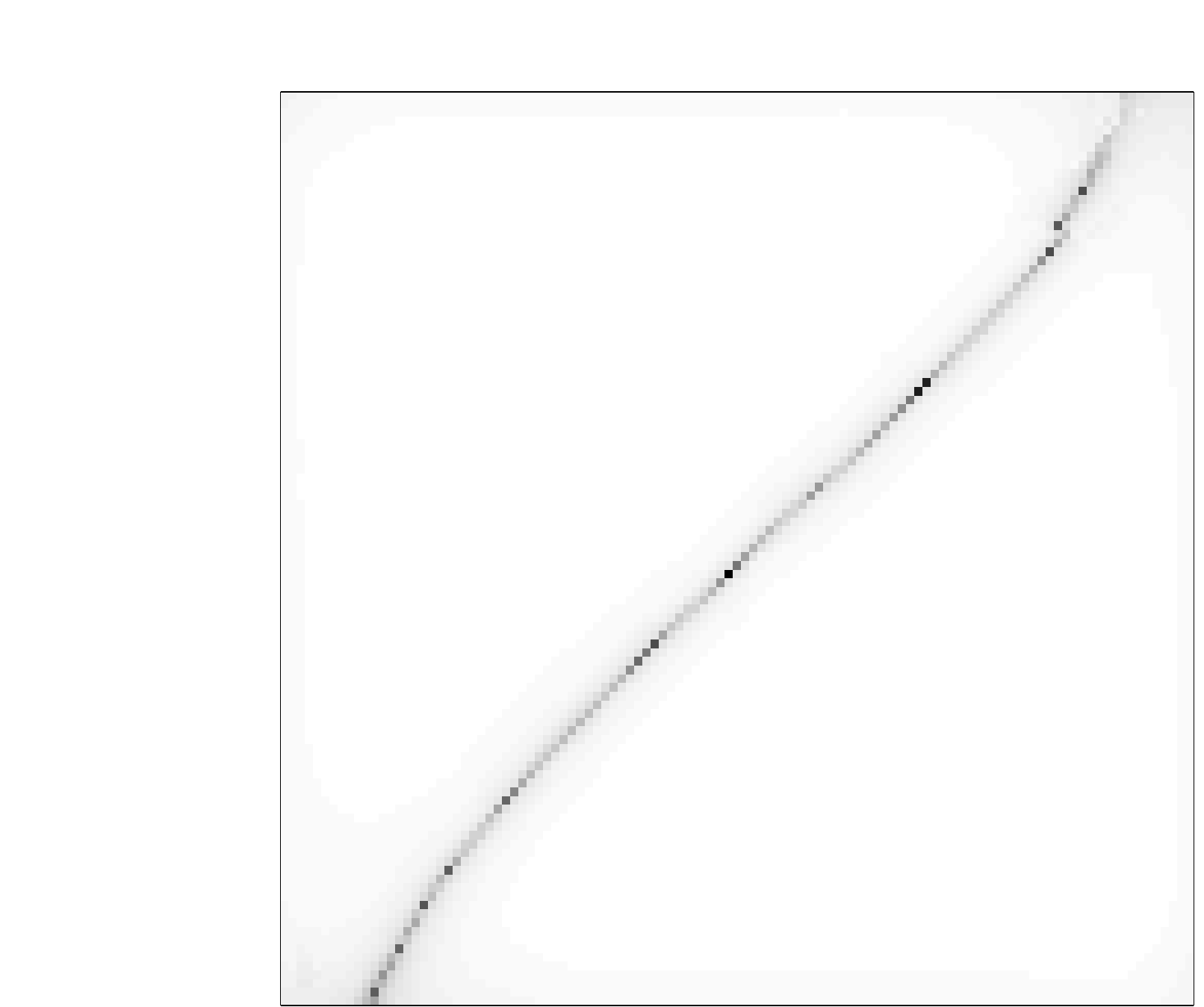}
  \end{center}
  \caption{Left: A TEM-image of size $420\times 444$ pixels in GaN. Courtesy of David M. Tricker (Department of Material Science and Metallurgy, University of Cambridge). Middle: The weighted average angle $\Angle(b)$ provided by Algorithm $\ref{alg:undeformed}$. Right: The boundary indicator function $\BD(b)$ provided by Algorithm $\ref{alg:undeformed}$. Primary implementation parameters: $s=t=0.5$, $d=0.5$.}
  \label{fig:GB2_fast}
\end{figure}

Figure \ref{fig:GB6_fast} shows an example of blurry boundaries in a photograph of a bubble raft. In this case, the transition between two grains is not sharp due to large distortion, in particular the local crystal structure is heavily disturbed near the boundary. Nevertheless, Algorithm $\ref{alg:undeformed}$ is capable of identifying three grains with sharp grain boundaries matching the distortion area as shown in Figure \ref{fig:GB6_fast} (middle) and \ref{fig:GB6_fast} (right).

\begin{figure}[ht!]
  \begin{centering}
    \hspace{-2em}
      \includegraphics[height=1.2in]{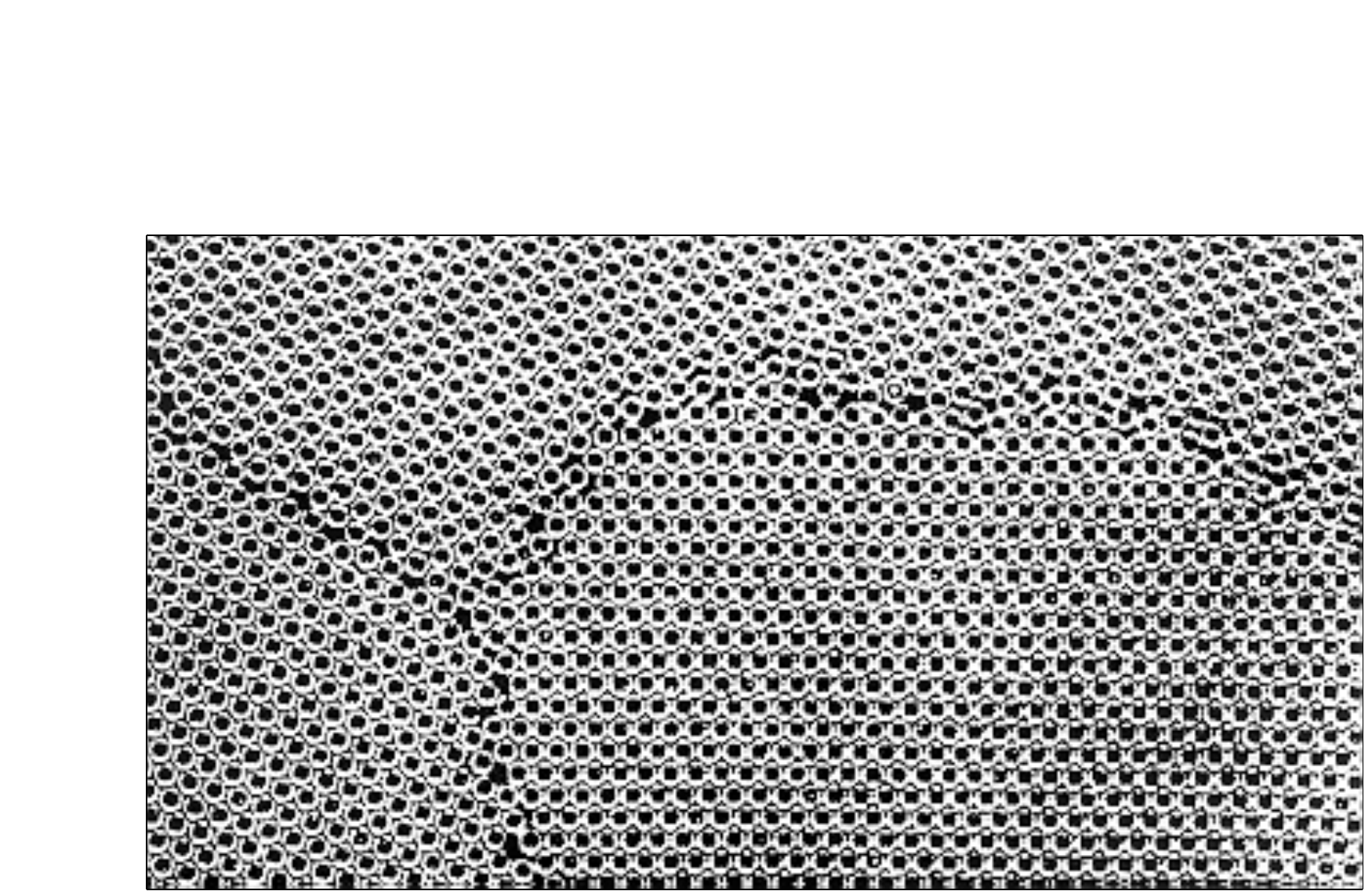}  \includegraphics[height=1.3in]{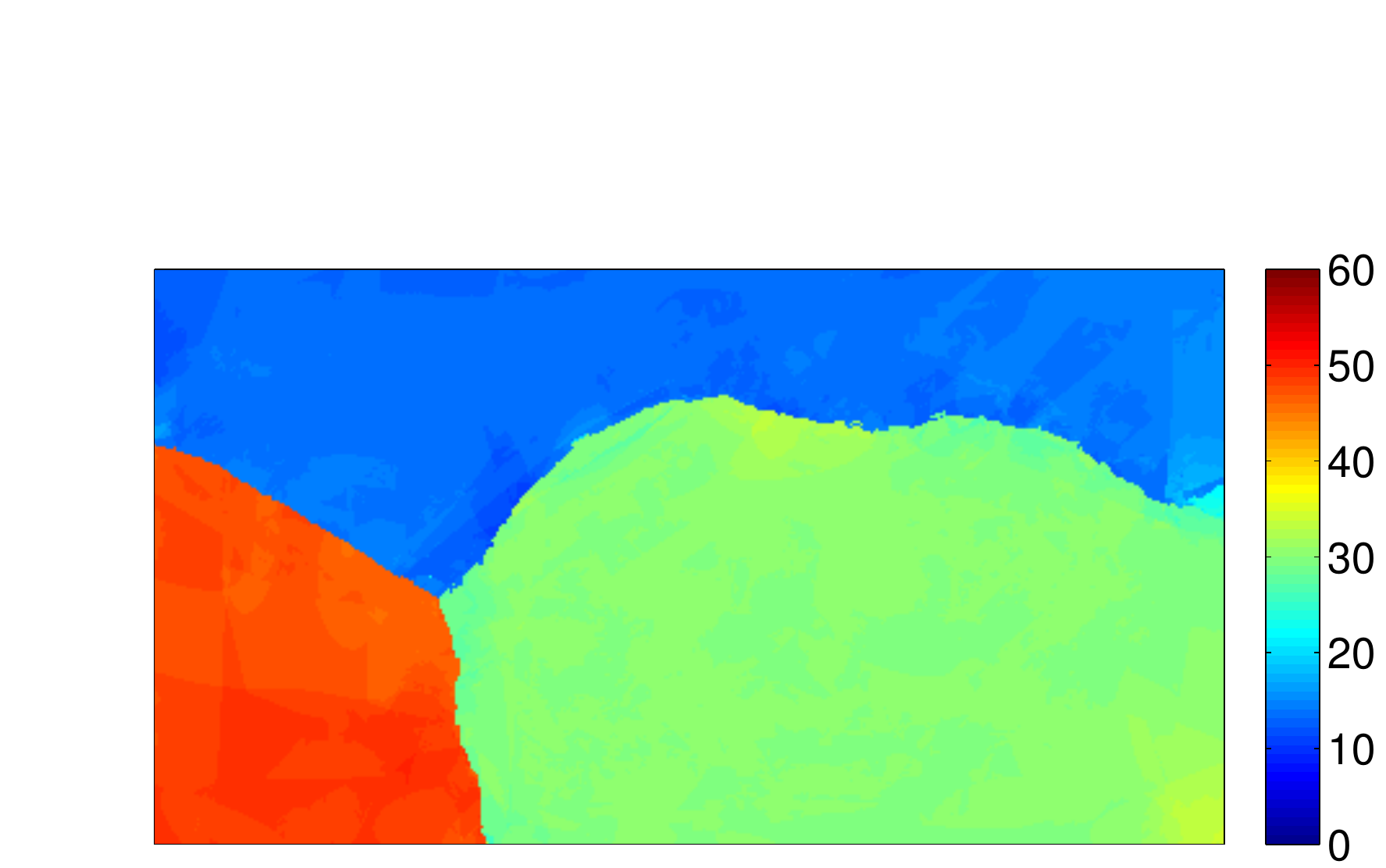}  \includegraphics[height=1.2in]{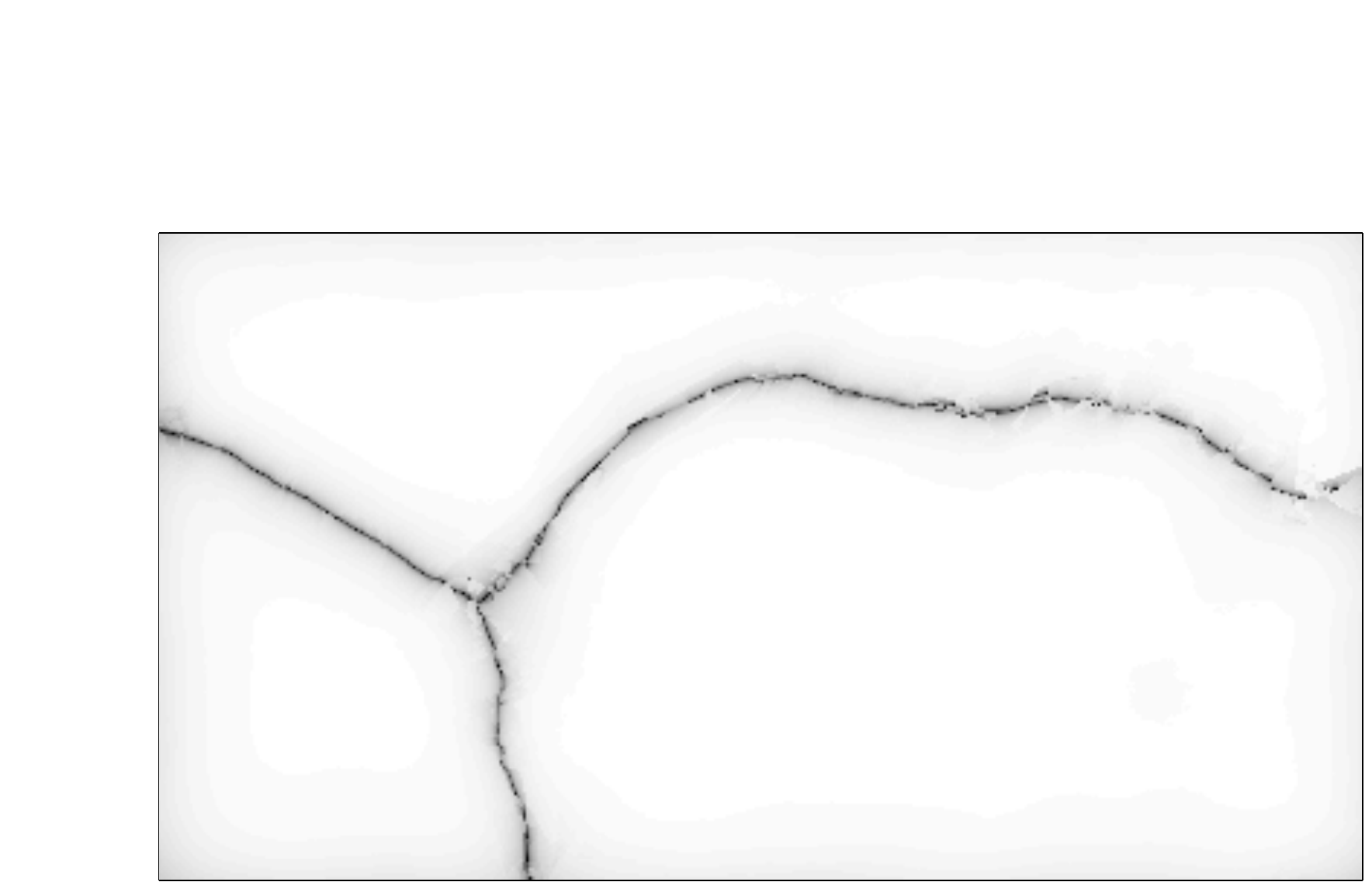}
  \end{centering}
  \caption{Left: A photograph of a bubble raft with large disorders and blurry boundaries. Courtesy to Barrie S. H. Royce in Princeton University. The image size is $223\times 415$. Middle and right: The weighted average angle $\Angle(b)$ and the boundary indicator function provided by Algorithm $\ref{alg:undeformed}$. Primary implementation parameters: $s=t=0.5$, $d=1$.}
  \label{fig:GB6_fast}
\end{figure}

To show the ability of our algorithm to analyze crystal images with strong illumination artifacts in the imaging like reflections and shadows, we analyze the example shown in Figure \ref{fig:GB3_fast} (left), where the reflections and shadows exhibit strong spatial variations. Algorithm $\ref{alg:undeformed}$ identifies two grains as shown in Figure \ref{fig:GB3_fast} (middle). The weighted average angle $\Angle(b)$ is slowly varying, indicating a smooth deformation over the domain. A large angle grain boundary consisting of an array of close point dislocations is identified by $\BD(b)$ in Figure \ref{fig:GB3_fast} (right). This example also shows that  Algorithm $\ref{alg:undeformed}$ may miss isolated defects, which motivates the design of Algorithm $\ref{alg:deformed}$. We will revisit this example later with Algorithm $\ref{alg:deformed}$ which will discover all the missing point defects. 

\begin{figure}[ht!]
  \begin{center}
    \hspace{-4em}
    \includegraphics[height=1.6in]{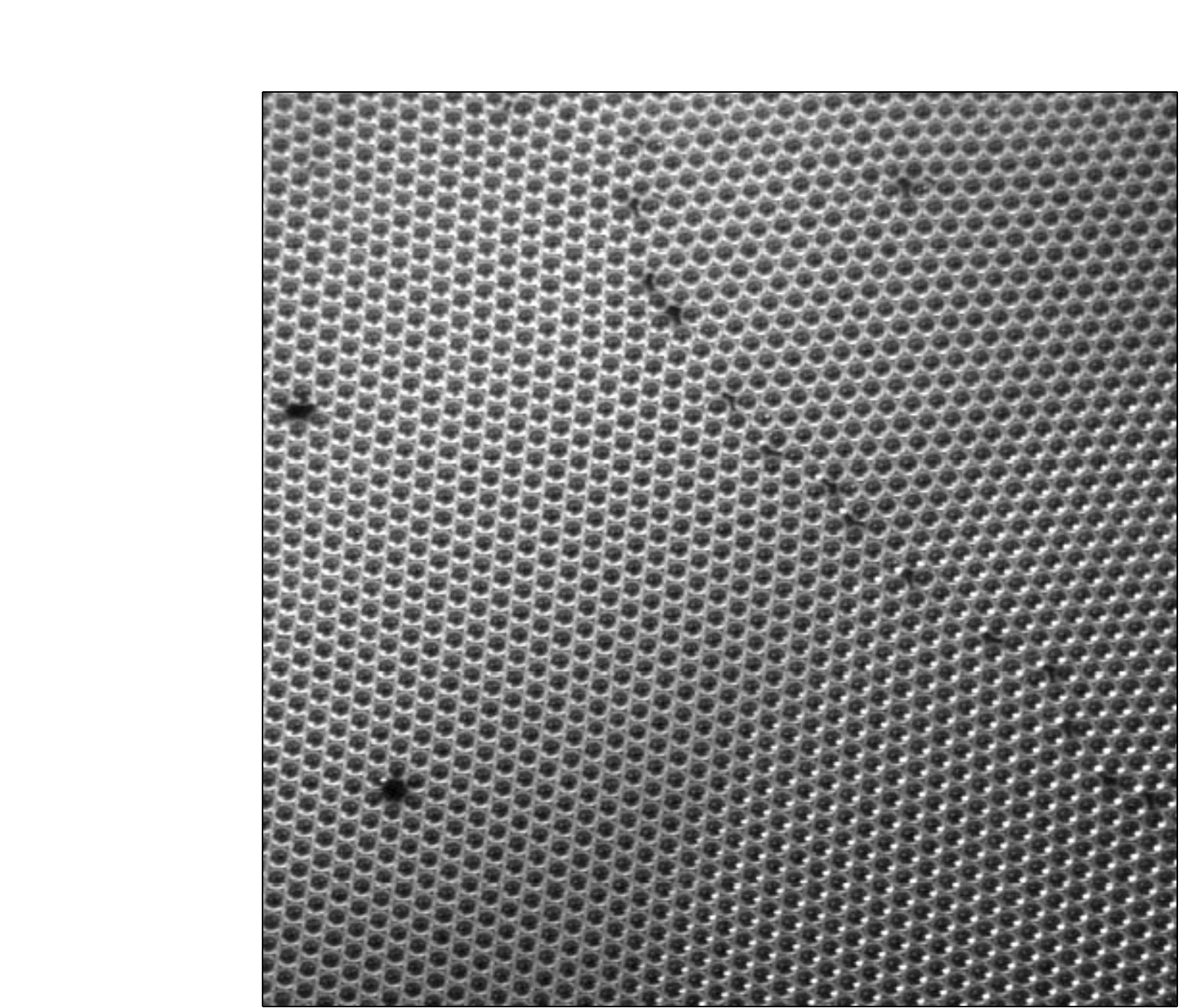}  \includegraphics[height=1.6in]{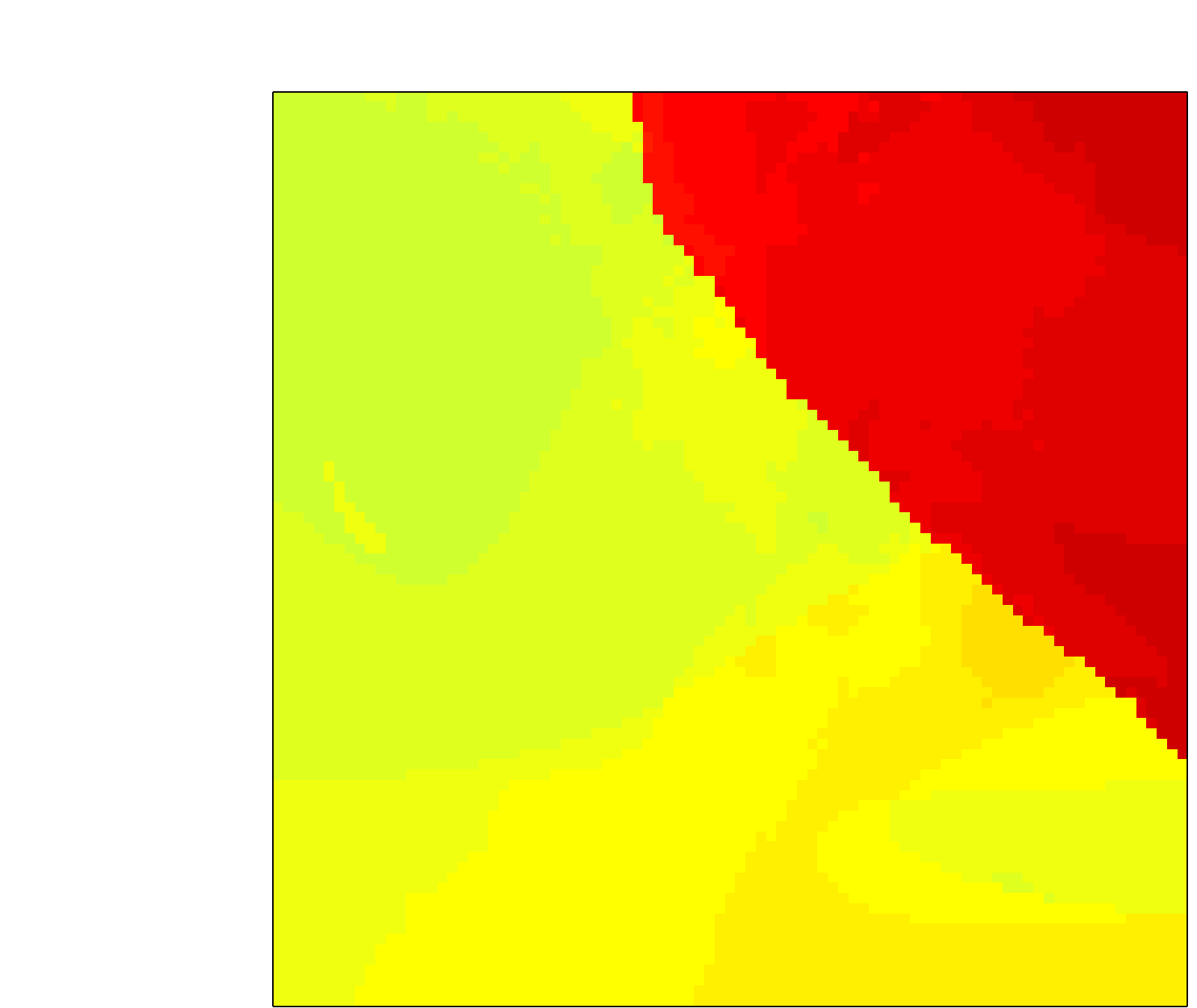} \includegraphics[height=1.6in]{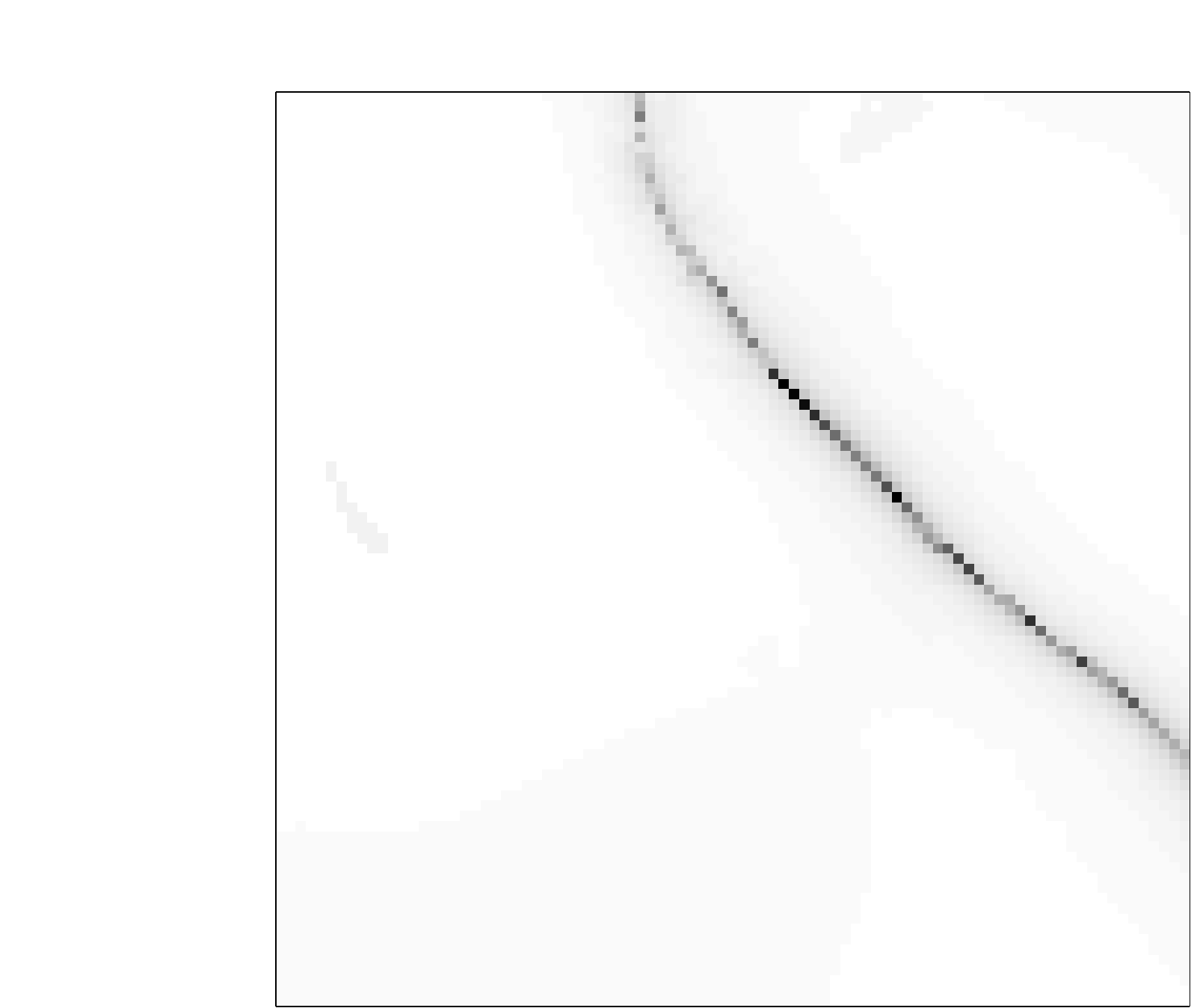}
  \end{center}
  \caption{Left: A photograph of a bubble raft with strong reflections and point dislocations (courtesy to Don Stone, copyright 2009 Board of Regents of the University of Wisconsin System). Its size is $476\times 476$ pixels. Middle and right: The weighted average angle $\Angle(b)$ and the boundary indicator function provided by Algorithm $\ref{alg:undeformed}$. Primary implementation parameters: $s=t=0.5$, $d=0.5$.}
  \label{fig:GB3_fast}
\end{figure}

Algorithm $\ref{alg:undeformed}$ also works robustly in noisy examples. A noisy example of a real atomic resolution image of a Sigma $99$ tilt grain boundary in aluminum is shown in Figure \ref{fig:GB4_fast} (a). The domain consists of  three grains with fuzzy transition regions. This is a rather challenging example, as individual atoms are hardly identifiable in the middle-left part of the image. Grain boundaries by manual inspection are indicated in red in Figure \ref{fig:GB4_fast} (g). The small region indicated by blue color contains a grain boundary that is hardly seen. 
The weighted average angle $\Angle(b)$ provided by Algorithm
$\ref{alg:undeformed}$ reveals three grains with slight deformation of
lattices in Figure \ref{fig:GB4_fast} (b). Grain boundaries can be also clearly seen from the boundary
indicator function $\BD(b)$ as shown in Figure \ref{fig:GB4_fast} (c). They can be identified by a simple thresholding using $\BD(b)$. Figure \ref{fig:GB4_fast} (h) embeds the identified boundaries into the original crystal image. The result by manual inspection in Figure \ref{fig:GB4_fast} (g) agrees with the one in Figure \ref{fig:GB4_fast} (h). 

To demonstrate the robustness of our method, we normalize the image in Figure \ref{fig:GB4_fast} (a) such that the maximum intensity is $1$ and add Gaussian white noise with a standard deviation equal to $1$. The polluted example is shown in Figure \ref{fig:GB4_fast} (d). The atom structure is barely seen even away from the grain boundaries. The weighted average angle $\Angle(b)$, the boundary indicator function $\BD(b)$, and the estimated grain boundaries are shown in Figure \ref{fig:GB4_fast} (e), (f), and (i). As shown by these results, Algorithm $\ref{alg:undeformed}$ is able to give similar results matching with the one by manual inspection even if noise is heavy.

\begin{figure}[ht!]
  \begin{center}
   \begin{tabular}{ccc}
    \includegraphics[height=1.5in]{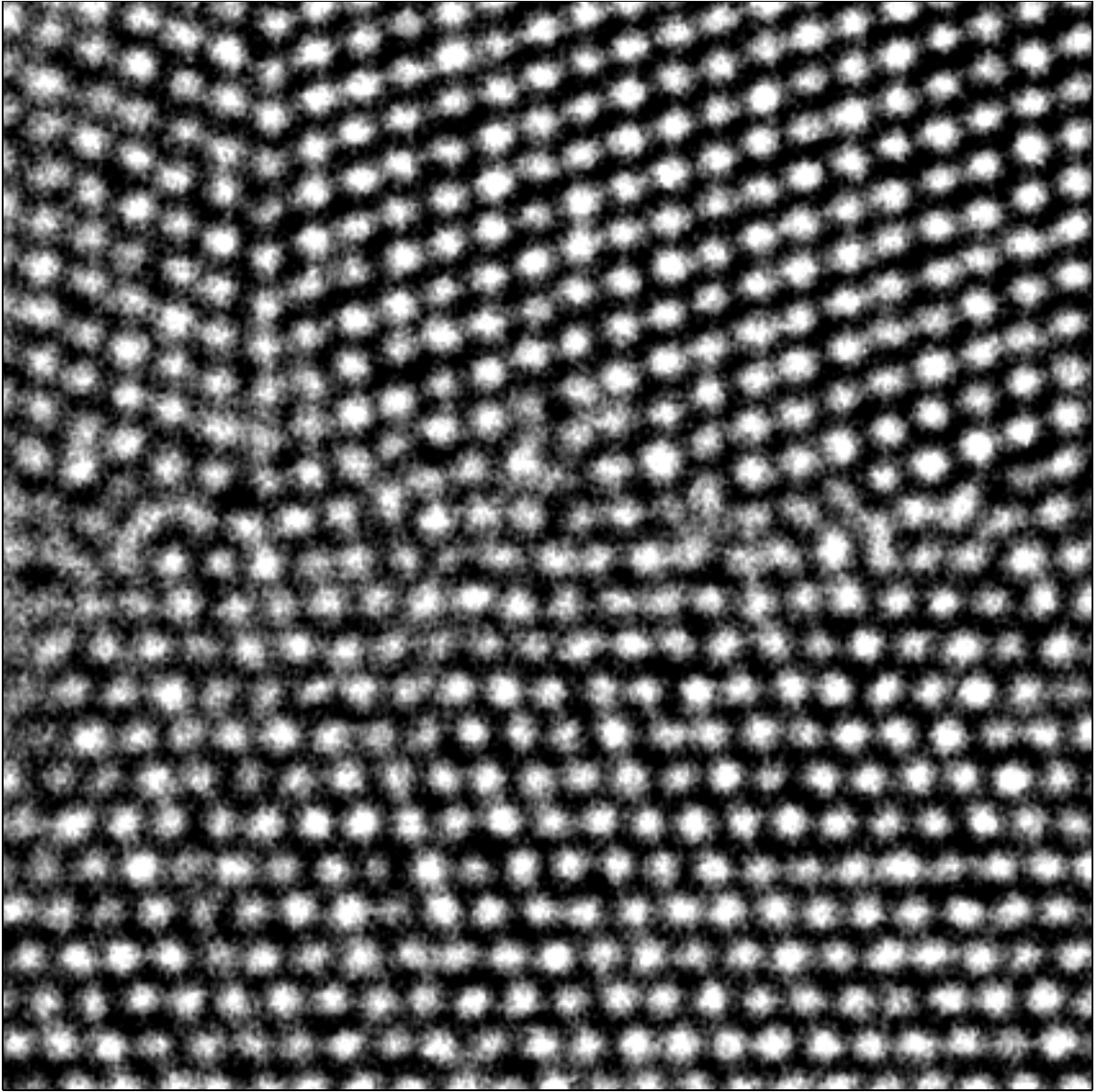}  & \includegraphics[height=1.5in]{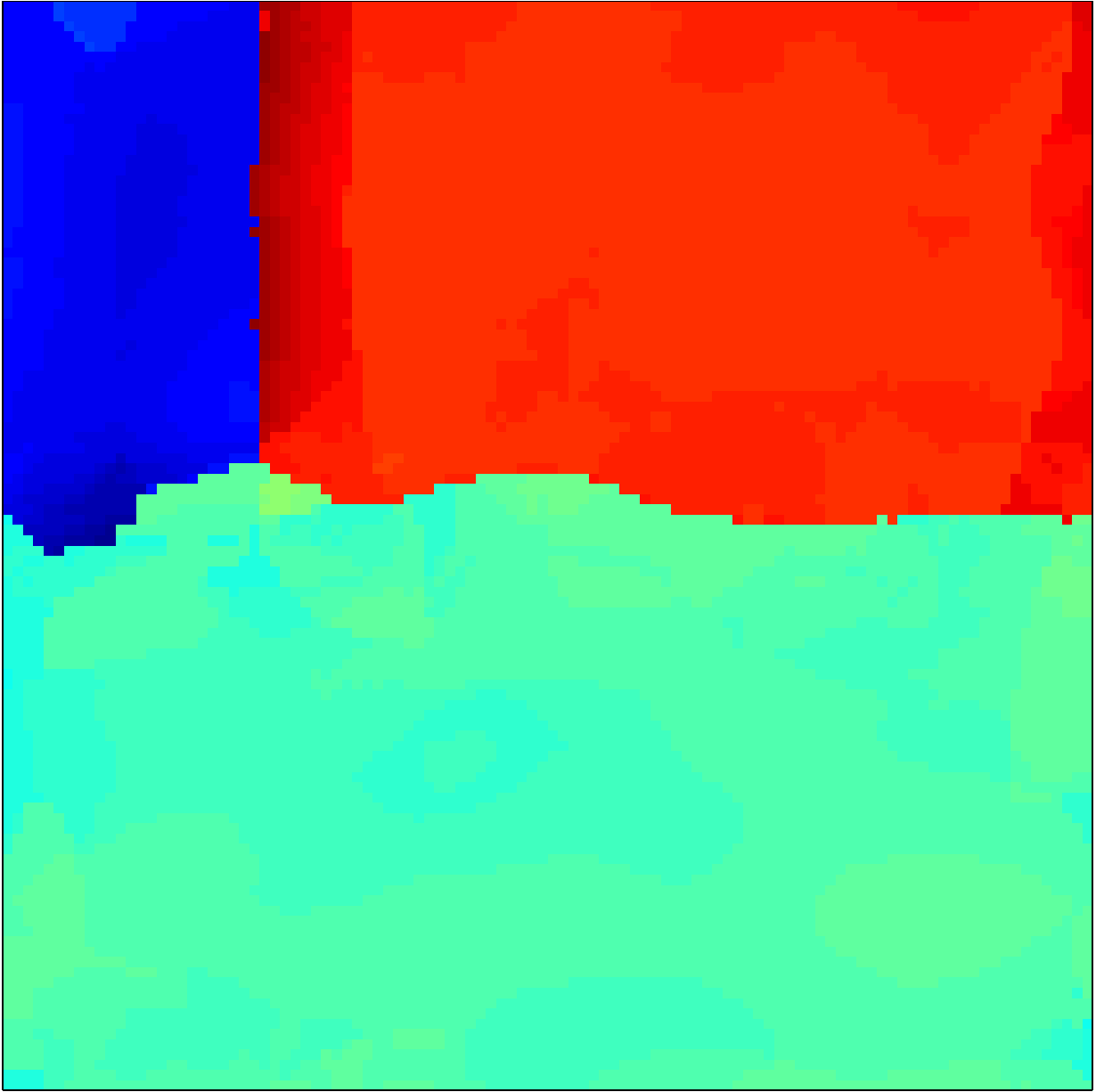} & \includegraphics[height=1.5in]{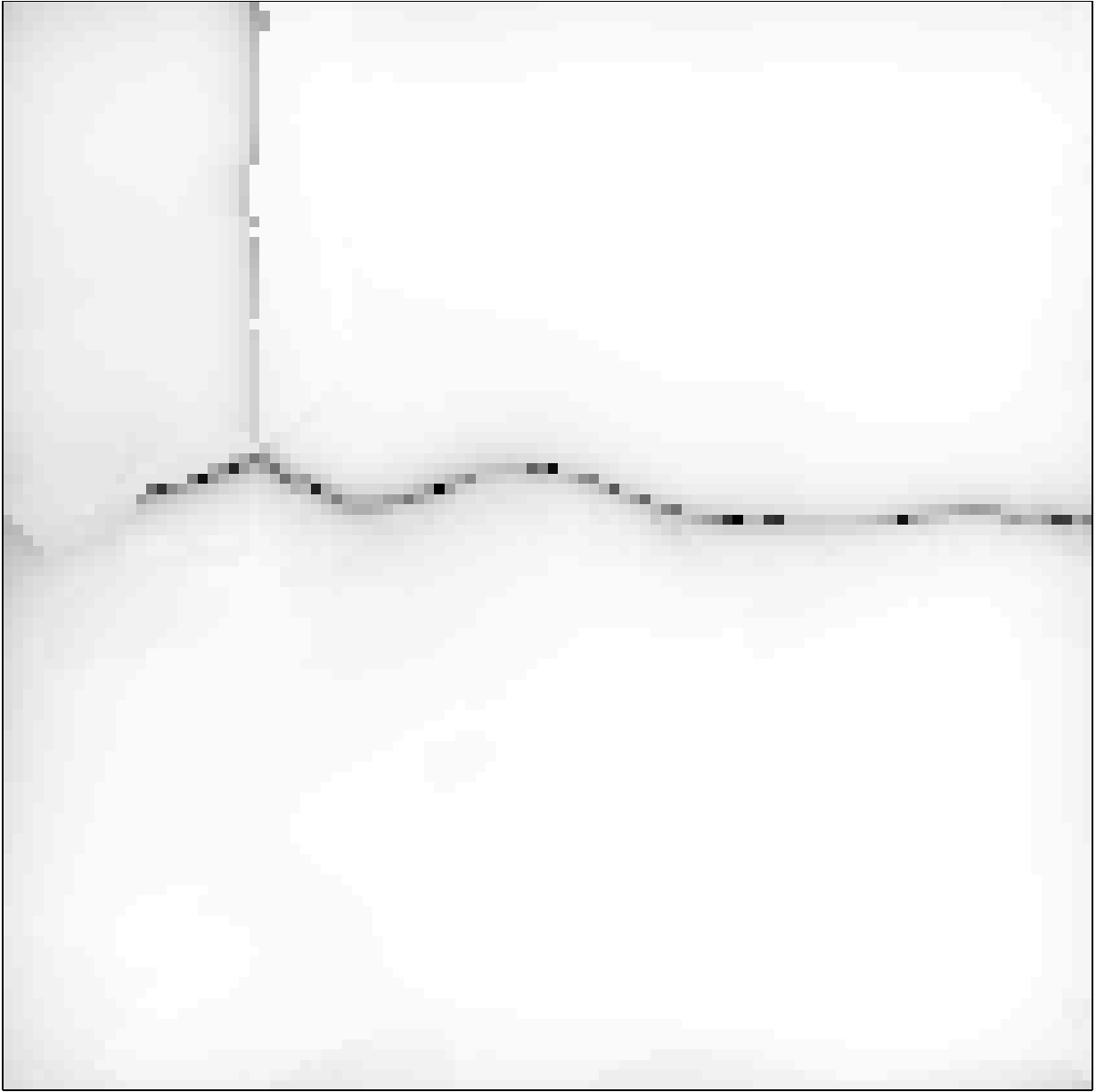} \\
    (a) & (b) & (c)\\
    \includegraphics[height=1.5in]{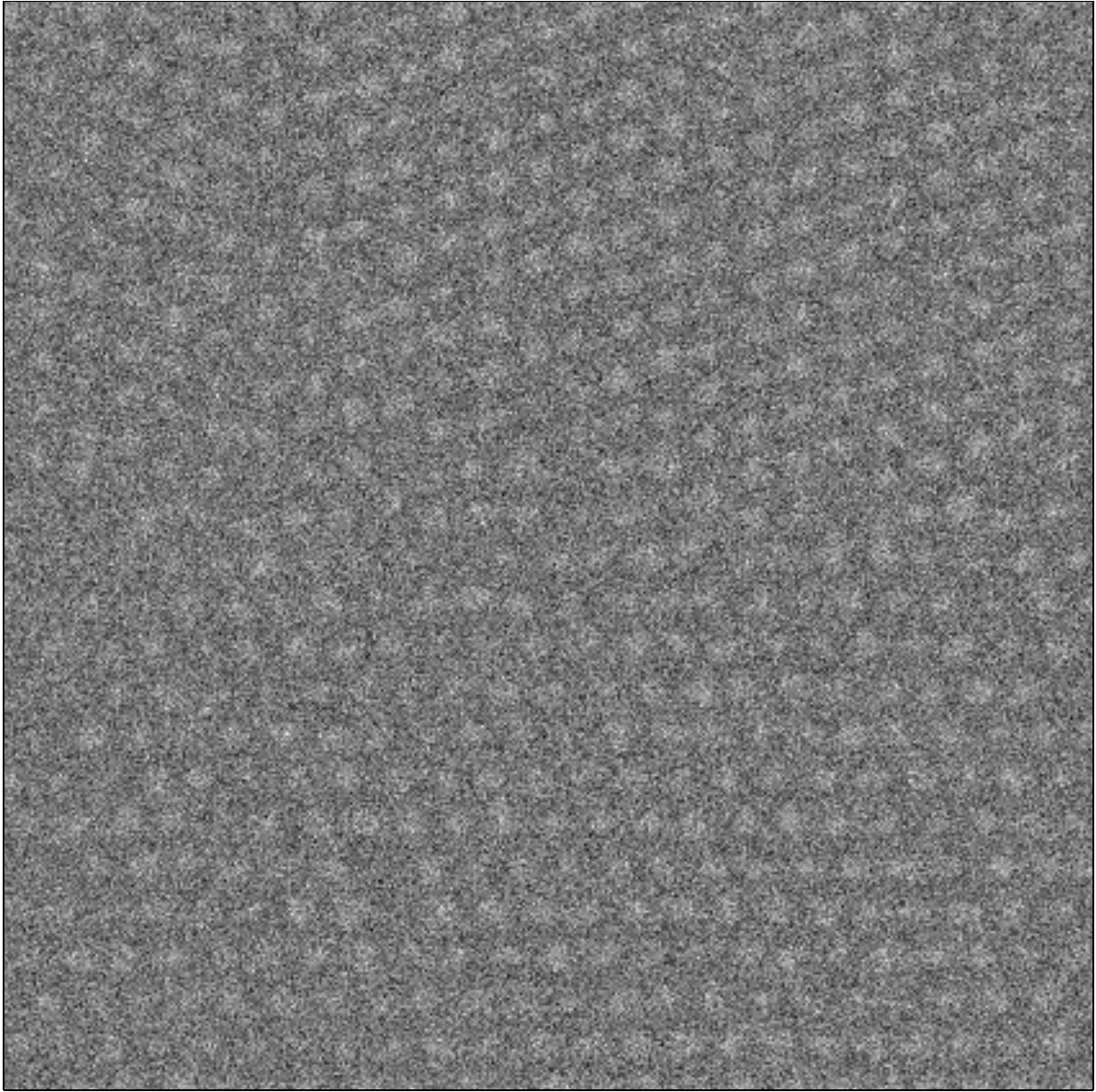}    & \includegraphics[height=1.5in]{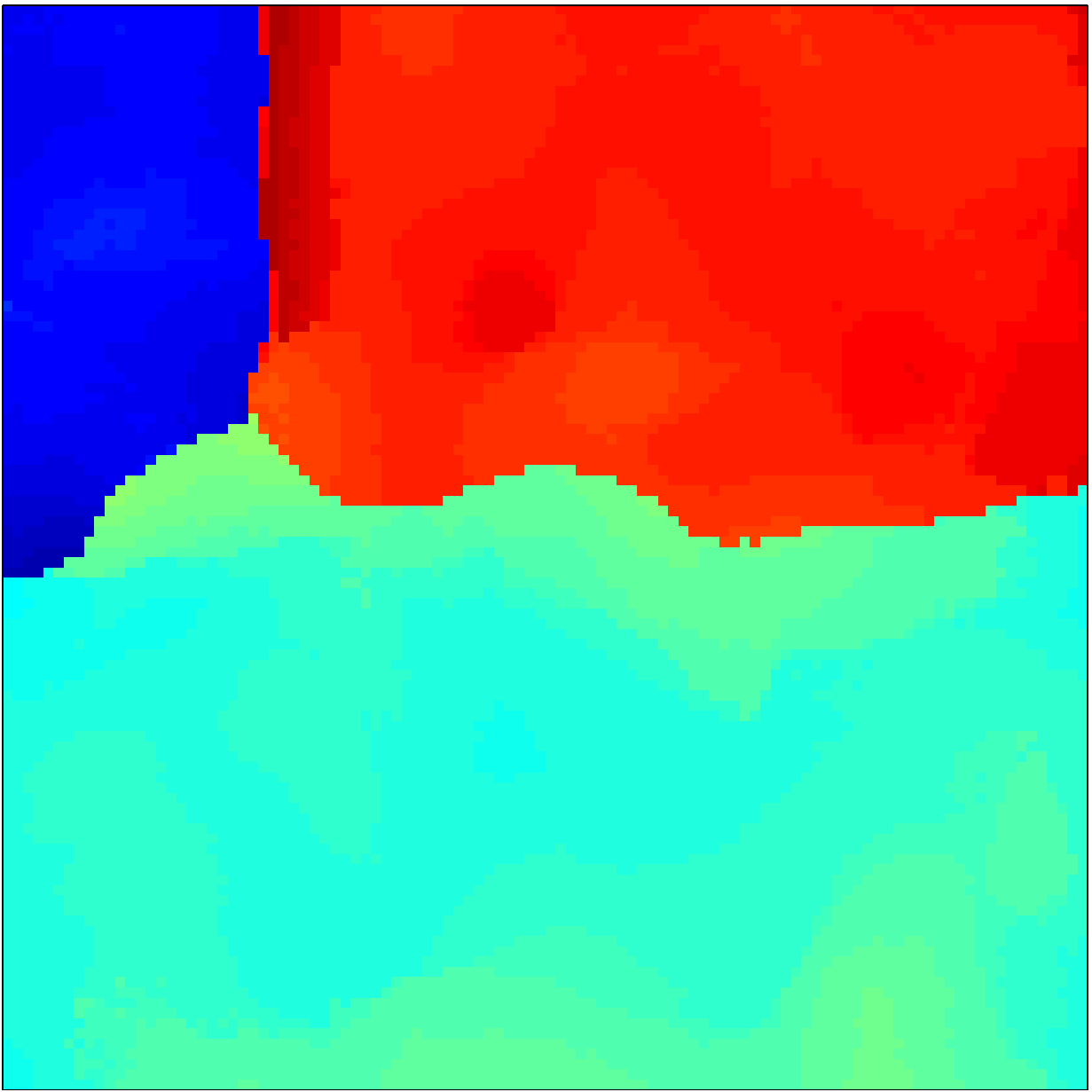}  &\includegraphics[height=1.5in]{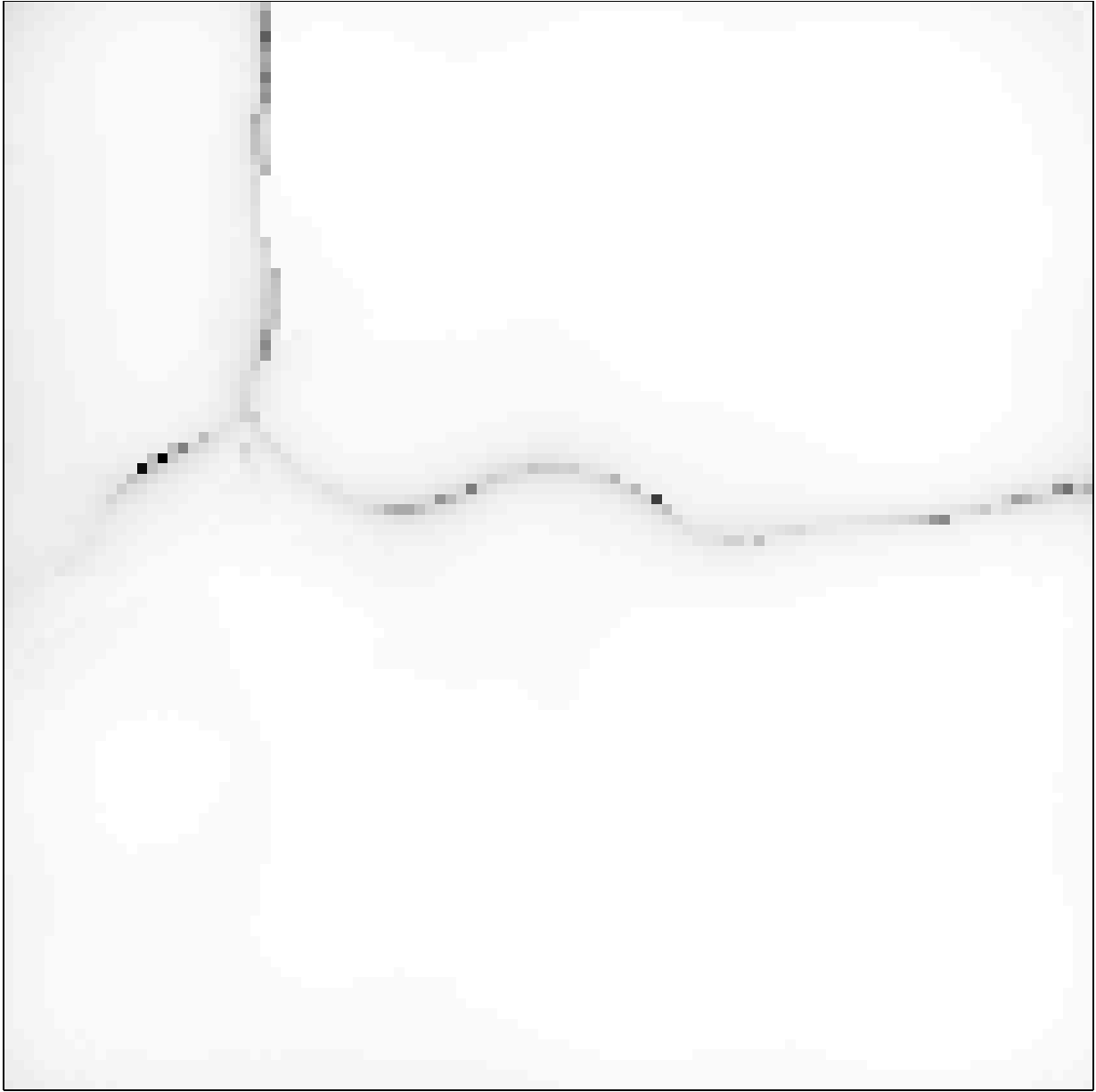}\\
    (d) & (e) & (f) \\
    \includegraphics[height=1.5in]{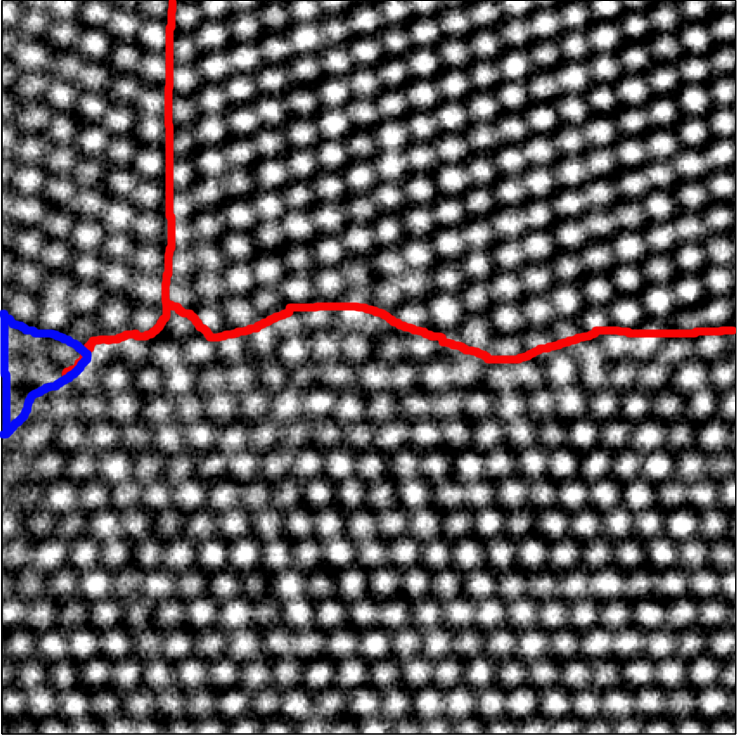}& \includegraphics[height=1.5in]{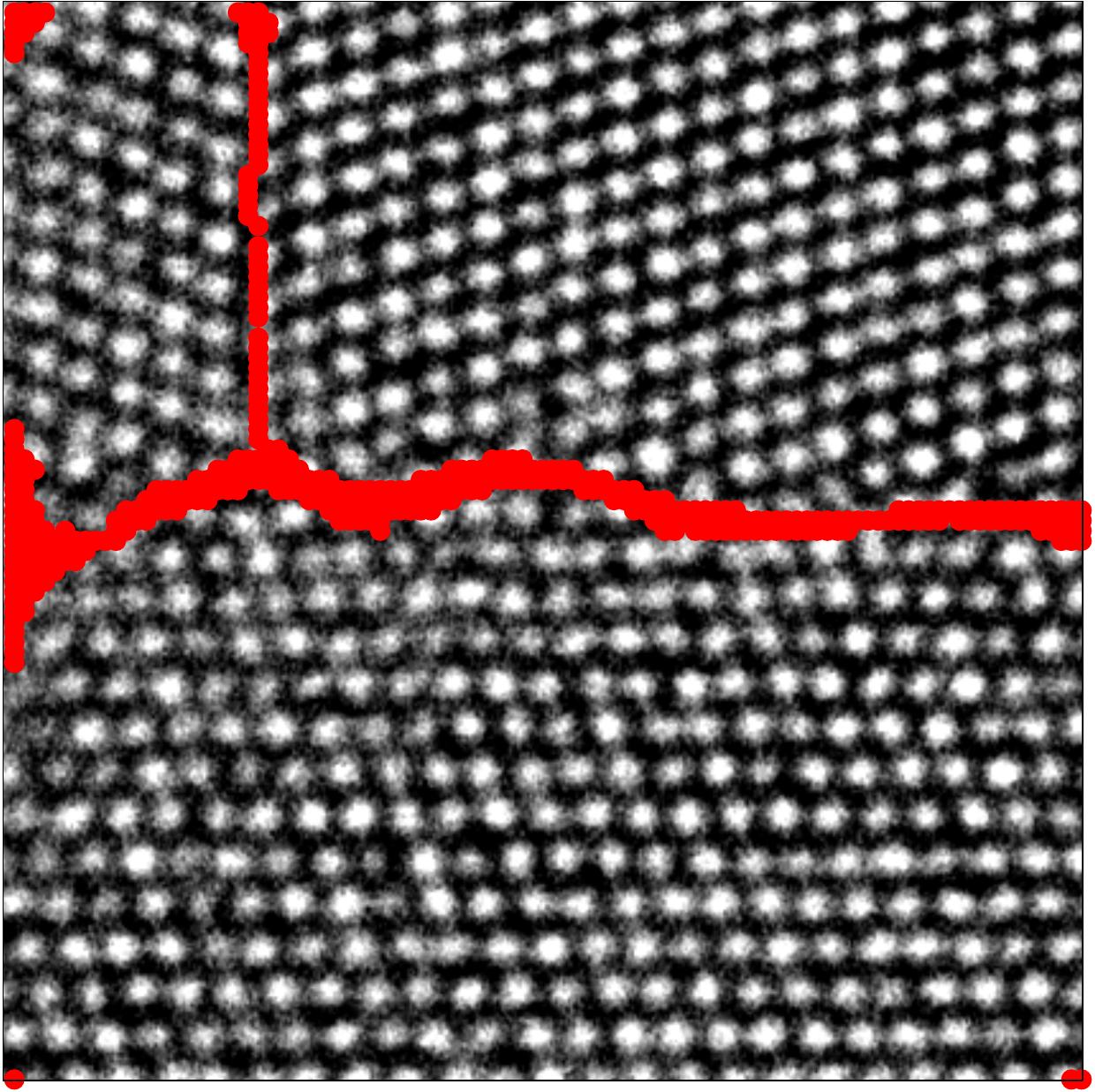}  &\includegraphics[height=1.5in]{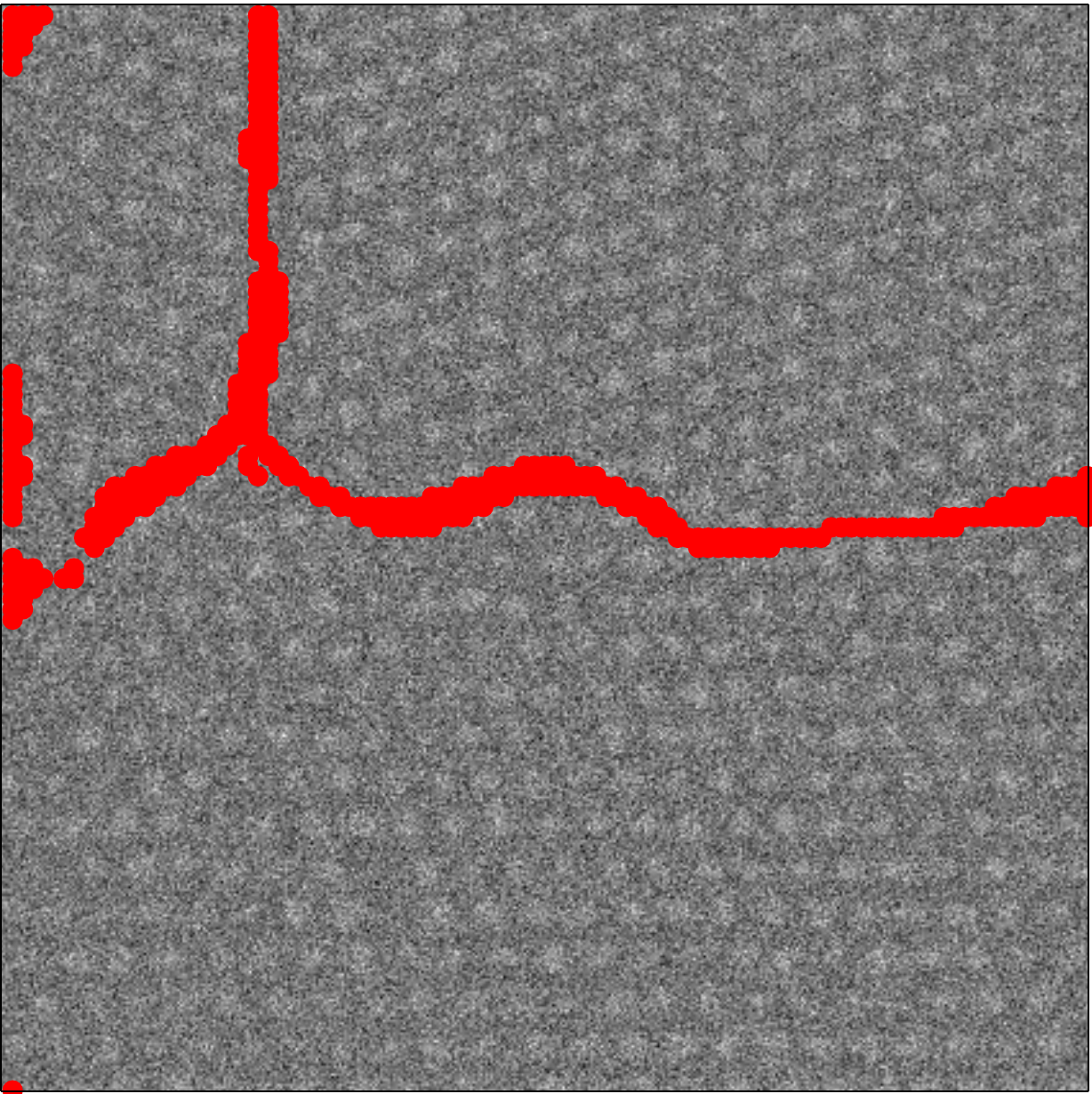} \\
    (g) & (h) & (i)
    \end{tabular}
  \end{center}
  \caption{(a) and (d): An atomic resolution image of $424\times 424$ pixels of Sigma $99$ tilt grain boundary in Al and its noisy version. Courtesy of National Center for Electron Microscopy in Lawrence Berkeley National Laboratory. (b) and (e): The weighted average angle $\Angle(b)$ of the images in (a) and (d), respectively. (c) and (f): The boundary indicator functions provided by Algorithm $\ref{alg:undeformed}$ for the images in (a) and (d), respectively. (g): The original crystal image associated with identified defect regions (in red) by manual inspection. The region indicated in blue contains a grain boundary that is hardly seen.  (h) and (i): Identified defect regions (in red) by thresholding the boundary indicator function of (a) and (d), respectively. Primary implementation parameters: $t=0.75$, $s=0.65$, $d=1$.}
  \label{fig:GB4_fast}
\end{figure}

\subsection{Examples for Algorithm $\ref{alg:deformed}$}

As pointed out previously, Algorithm $\ref{alg:deformed}$ is more sensitive to local defects than Algorithm $\ref{alg:undeformed}$. Therefore, it can better discover local defects hidden in the underlying wave-like components of crystal images. To validate this, let us revisit the phase field crystal image example shown in Figure~\ref{fig:GB1_fast}. 
The analysis results are presented in Figure \ref{fig:GB1_elastic} and \ref{fig:GB1vol}. First, as shown in Figure \ref{fig:GB1_elastic} (middle), the weighted average angle $\Angle(b)$ provided by Algorithm $\ref{alg:deformed}$ is consistent with the one by Algorithm $\ref{alg:undeformed}$. Second, the boundary indicator function $\BD(x)$ by Algorithm $\ref{alg:deformed}$ reveals all local defects including grain boundaries and isolated dislocations clearly. As a trade off, Algorithm $\ref{alg:deformed}$ visualizes grain boundaries as narrow fuzzy bands instead of sharp curves. Finally, as shown in Figure \ref{fig:GB1vol}, isolated point dislocations appear as dipoles in the visualization of the local volume distortion $\Vol(b)$. As a consequence, the Burgers vectors of corresponding isolated point dislocations can be inferred by the directions of these dipoles.

To demonstrate the robustness of Algorithm $\ref{alg:deformed}$, we normalize the intensity of the crystal image in Figure \ref{fig:GB1_elastic} and add Gaussian white noise with distributions $0.5\mathcal{N}(0,1)$ and $1.4\mathcal{N}(0,1)$. These noisy examples are shown in Figure \ref{fig:ns} (a) and (d), respectively. 
In these heavily polluted cases, even when no crystal structure is clearly visible by human eyes, Algorithm $\ref{alg:deformed}$ is still able to reveal grain boundaries and isolated defects with a reasonable accuracy (see Figure \ref{fig:ns} (b) and (e)). The distortion volume in Figure \ref{fig:ns} (c) and (f) still roughly reflects the strain stress encoded by color.

\begin{figure}[ht!]
  \begin{center}
\hspace{-1em}
      \includegraphics[height=1.55in]{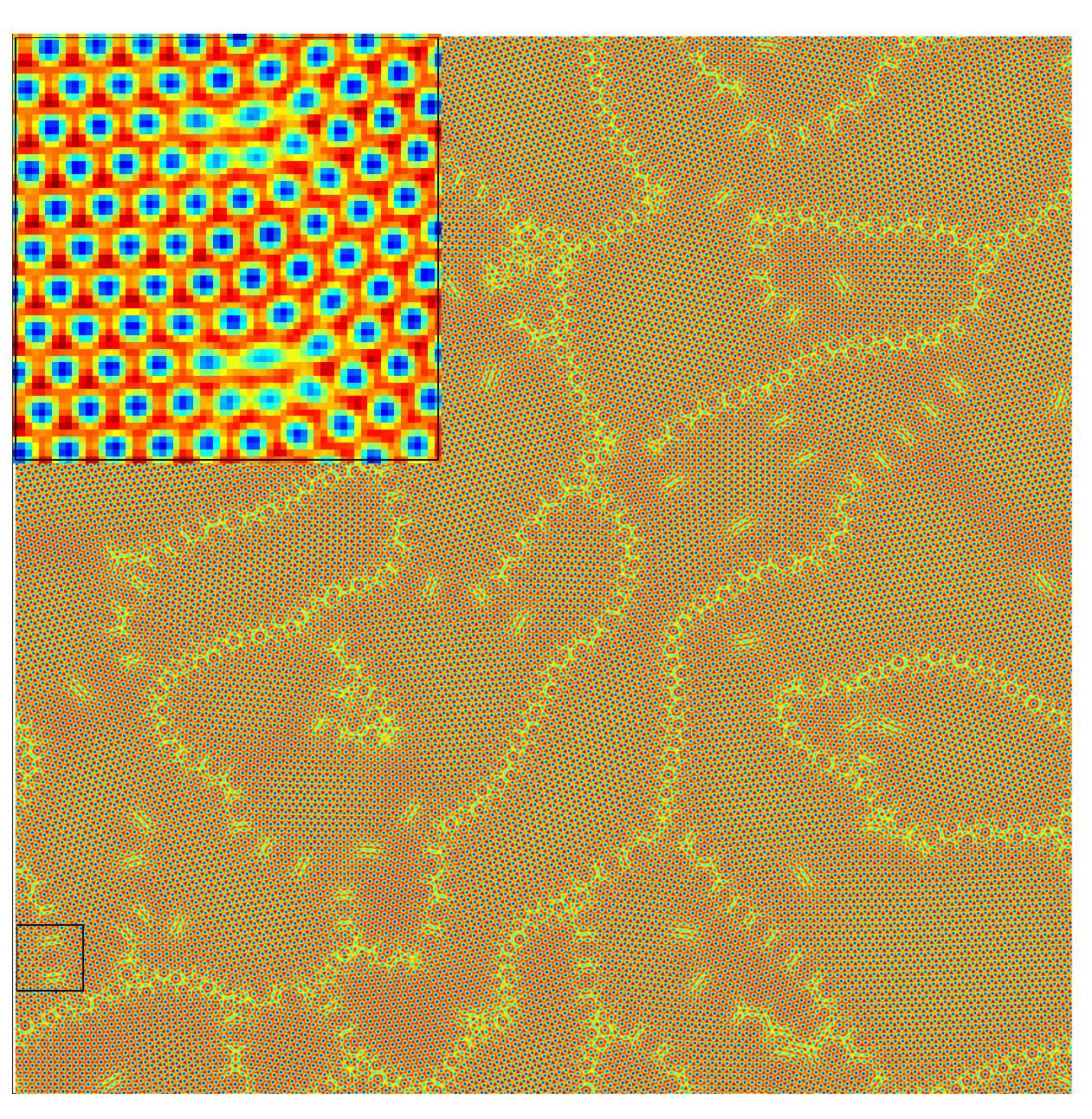}  \includegraphics[height=1.6in]{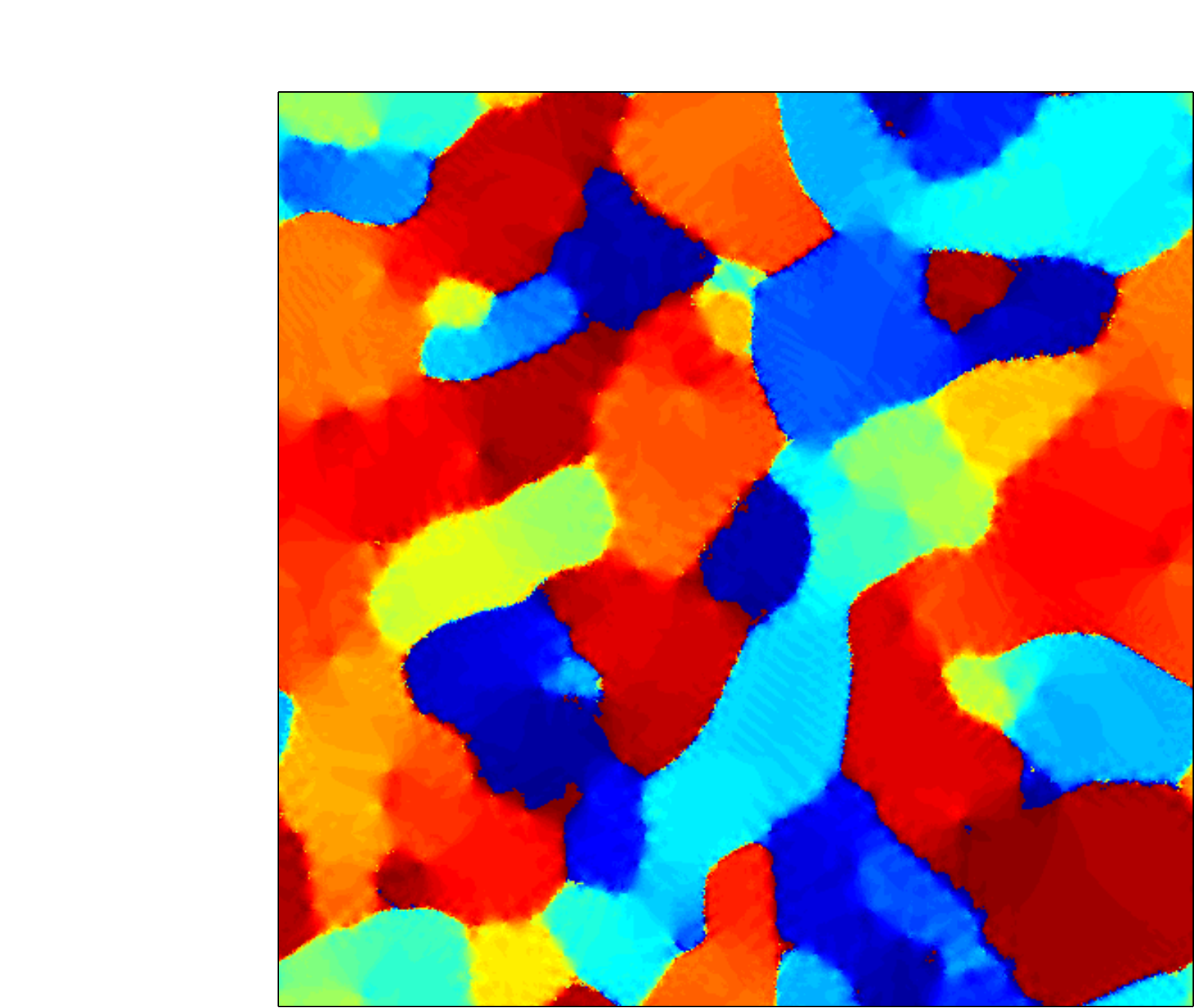} \hspace{1cm}  \includegraphics[height=1.55in]{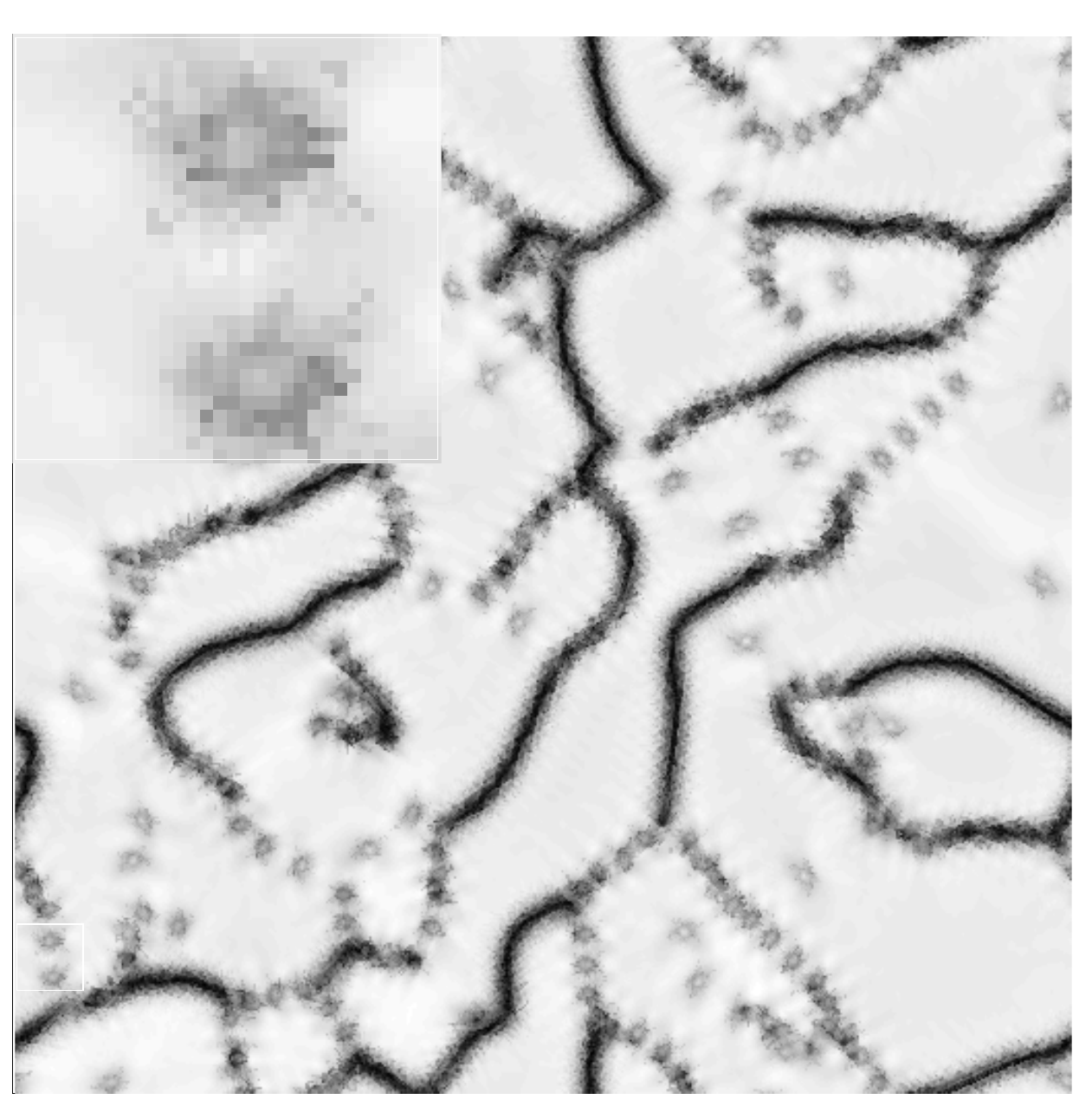}\\
\hspace{-4em}
      \includegraphics[height=1.6in]{GB1_image_zoomed.pdf}  \includegraphics[height=1.6in]{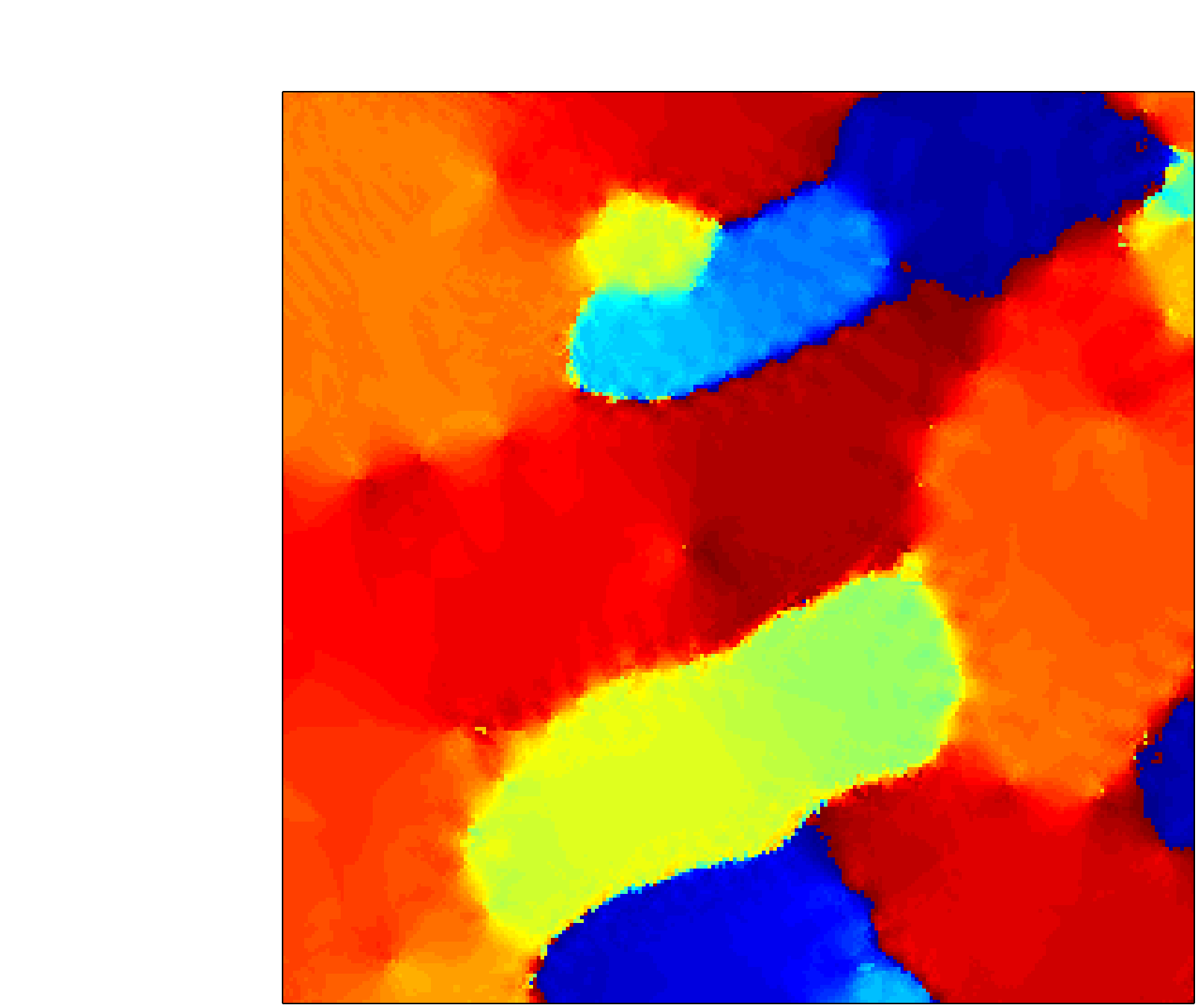}  \includegraphics[height=1.6in]{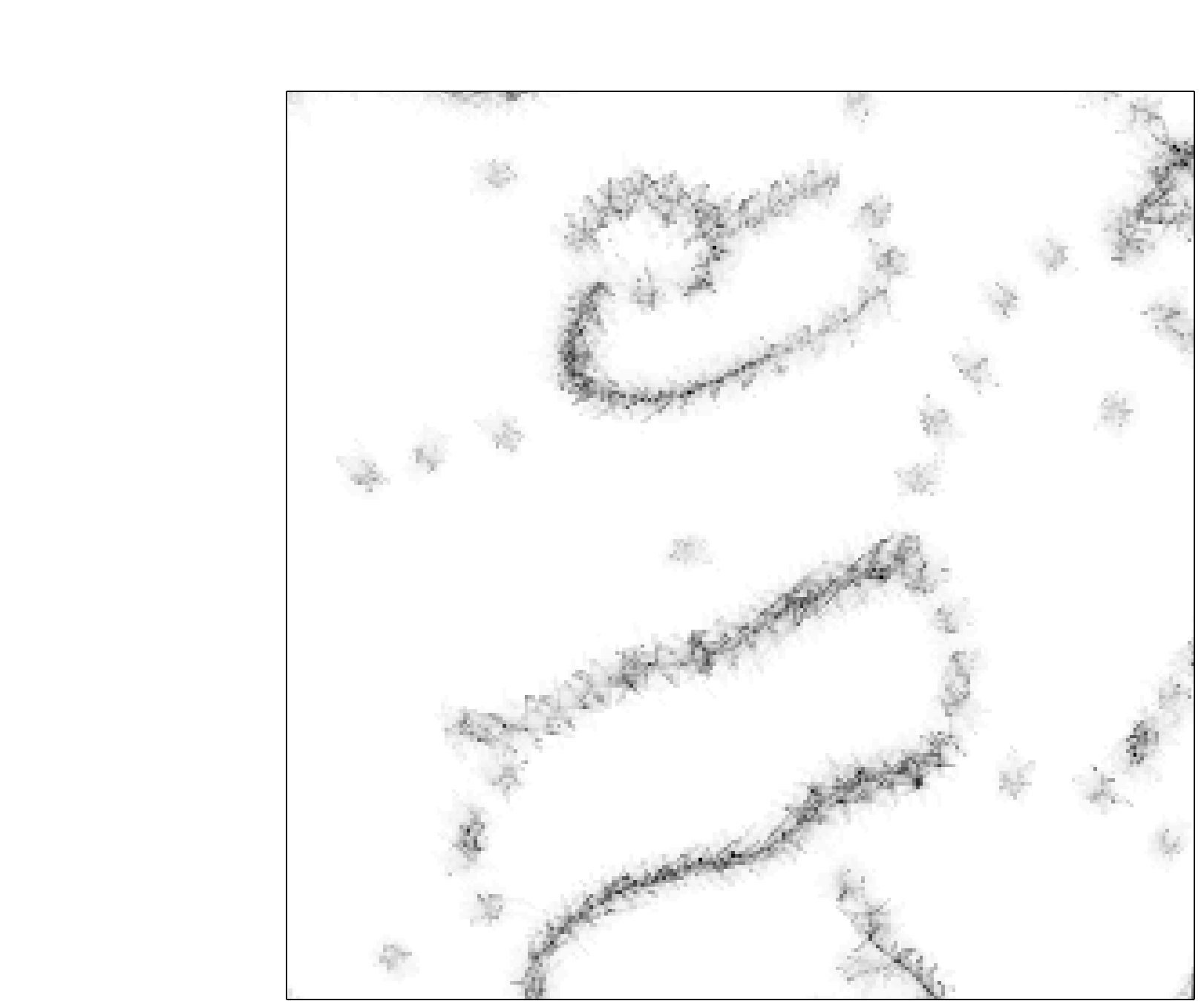}
  \end{center}
  \caption{Analysis results of a phase field crystal image  provided by Algorithm $\ref{alg:deformed}$. Compare with Figure~\ref{fig:GB1_fast}. Left: A phase field crystal (PFC) image and its zoomed-in images. Middle: The weighted average angle $\Angle(b)$ and its zoomed-in results. Right: The weighted boundary indicator function $\BD(b)$ and its zoomed-in results. The small rectangles in the bottom-left corner of these images indicate the zoomed-in images in top-left corner. Primary implementation parameters: $s=t=0.8$, $d=1$.}
  \label{fig:GB1_elastic}
\end{figure}

\begin{figure}[ht!]
  \begin{center}
    \begin{tabular}{cc}
      \includegraphics[height=1.8in]{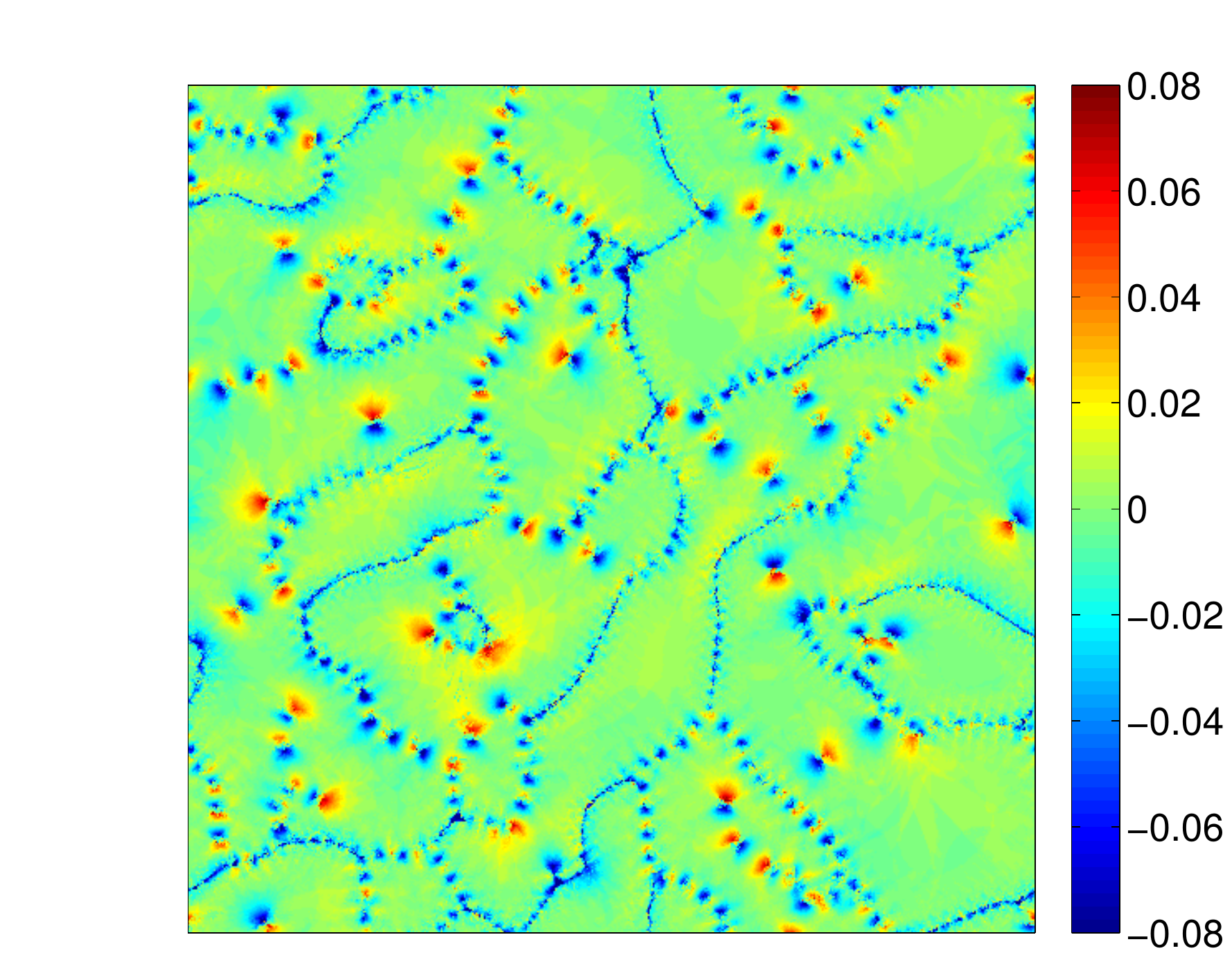} &  \includegraphics[height=1.8in]{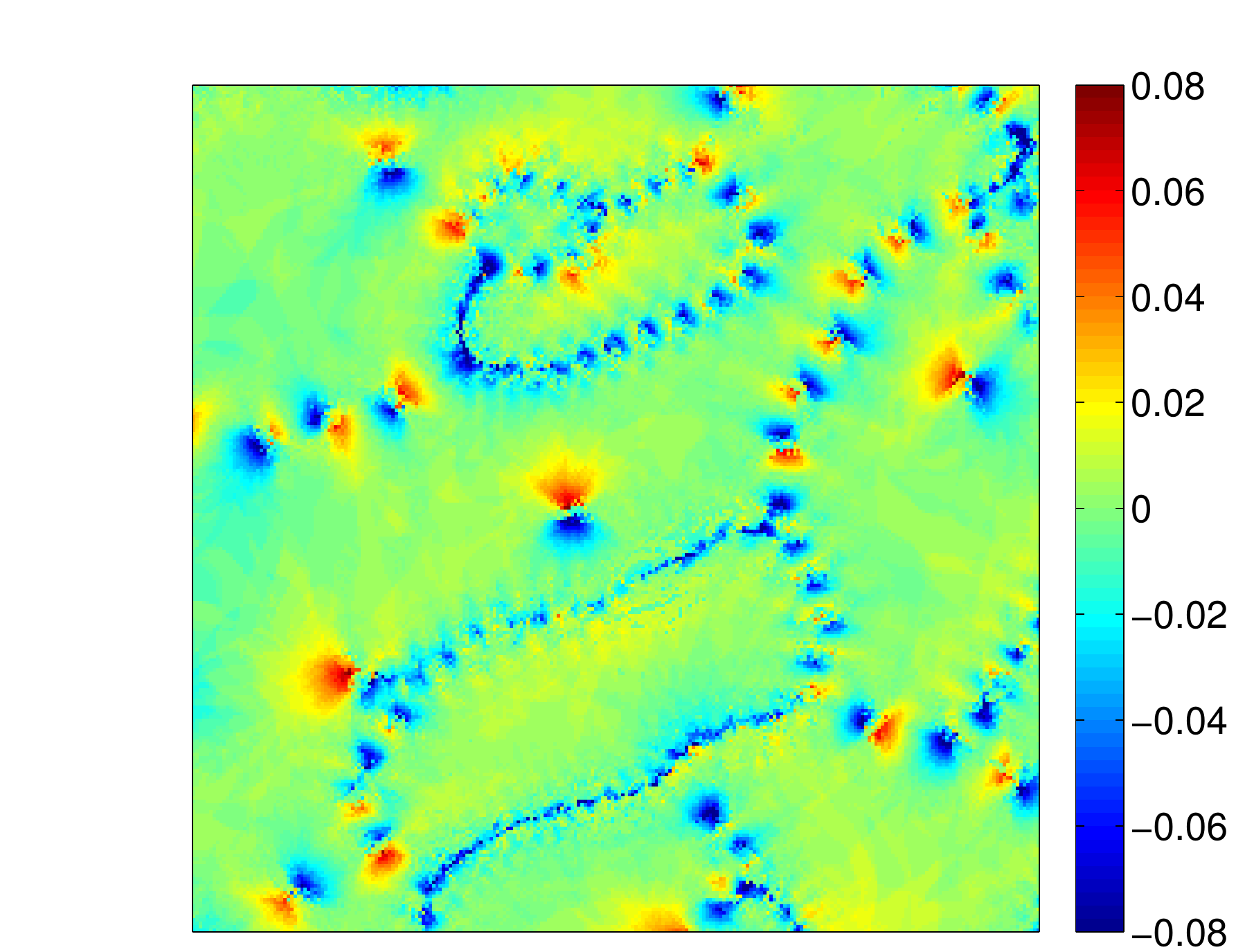}
    \end{tabular}
  \end{center}
  \caption{Left: The local volume distortion estimate $\Vol(b)$ of the phase field crystal image example in Figure~\ref{fig:GB1_elastic} and its zoomed-in result. The color limits of these two images are set to be $0.08$ and $-0.08$.} 
  \label{fig:GB1vol}
\end{figure}

\begin{figure}[ht!]
  \begin{center}
    \begin{tabular}{ccc}
   \includegraphics[height=1.6in]{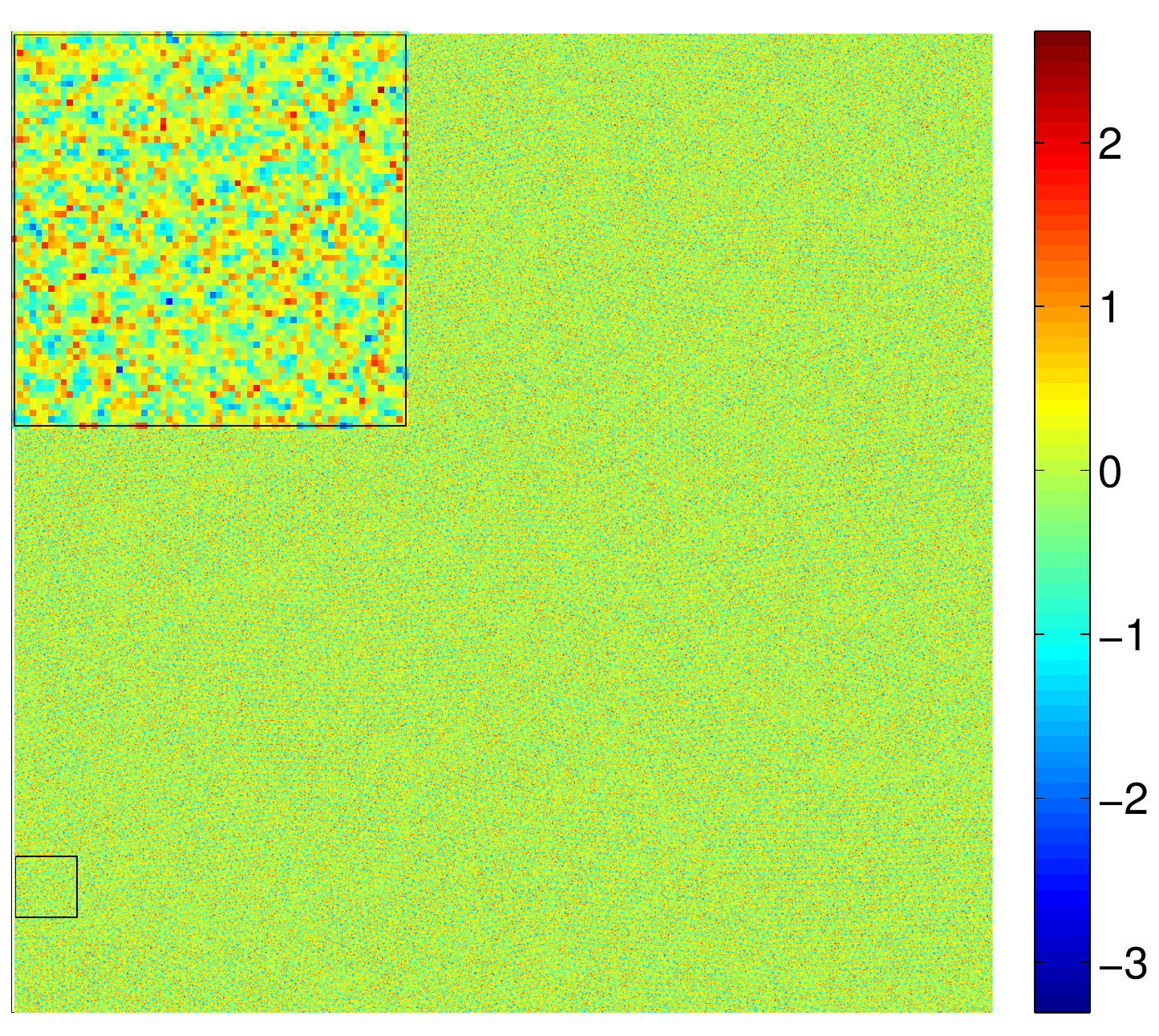}&  \includegraphics[height=1.6in]{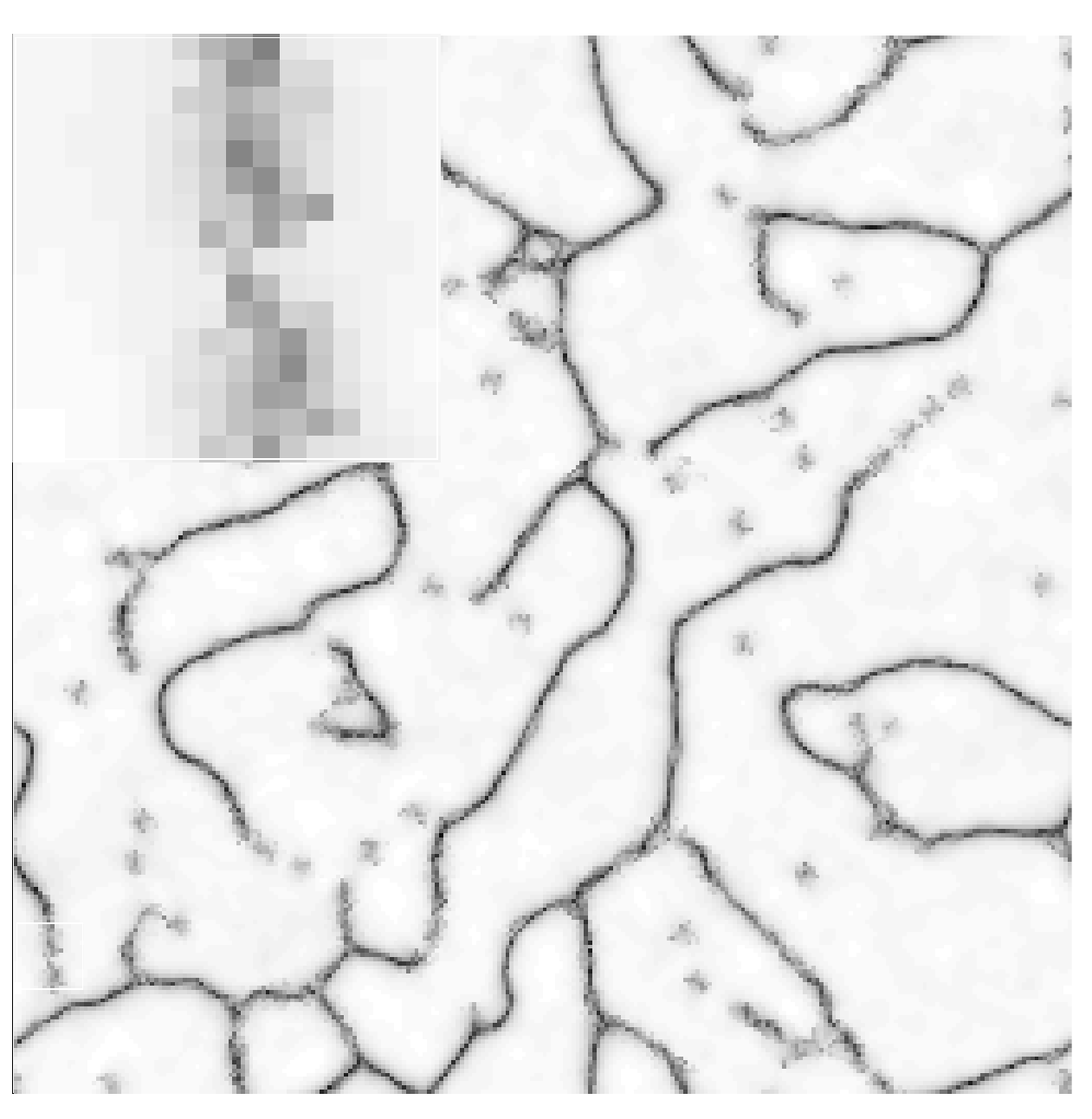}&\includegraphics[height=1.7in]{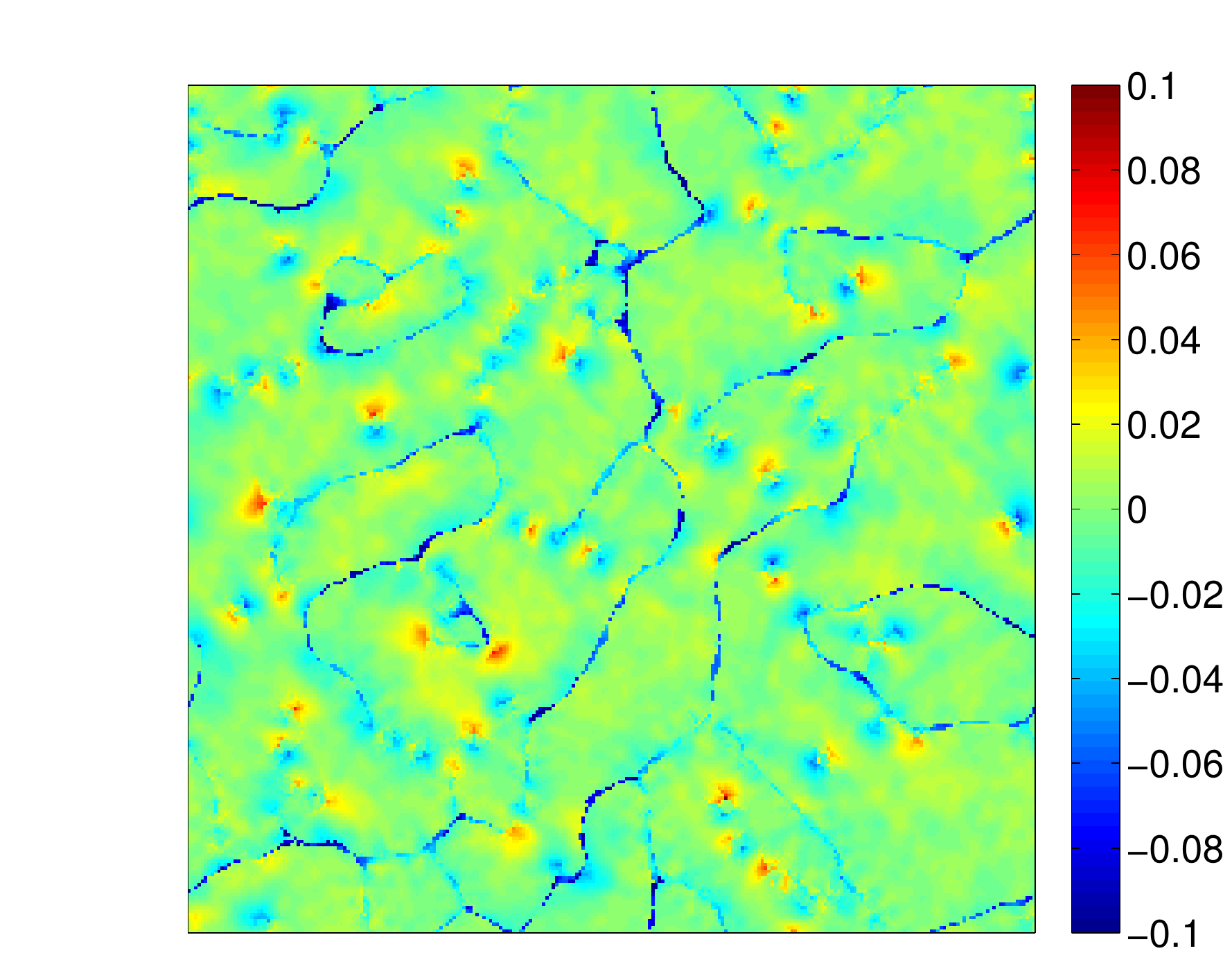}\\
   (a)&(b)&(c)\\
   \includegraphics[height=1.6in]{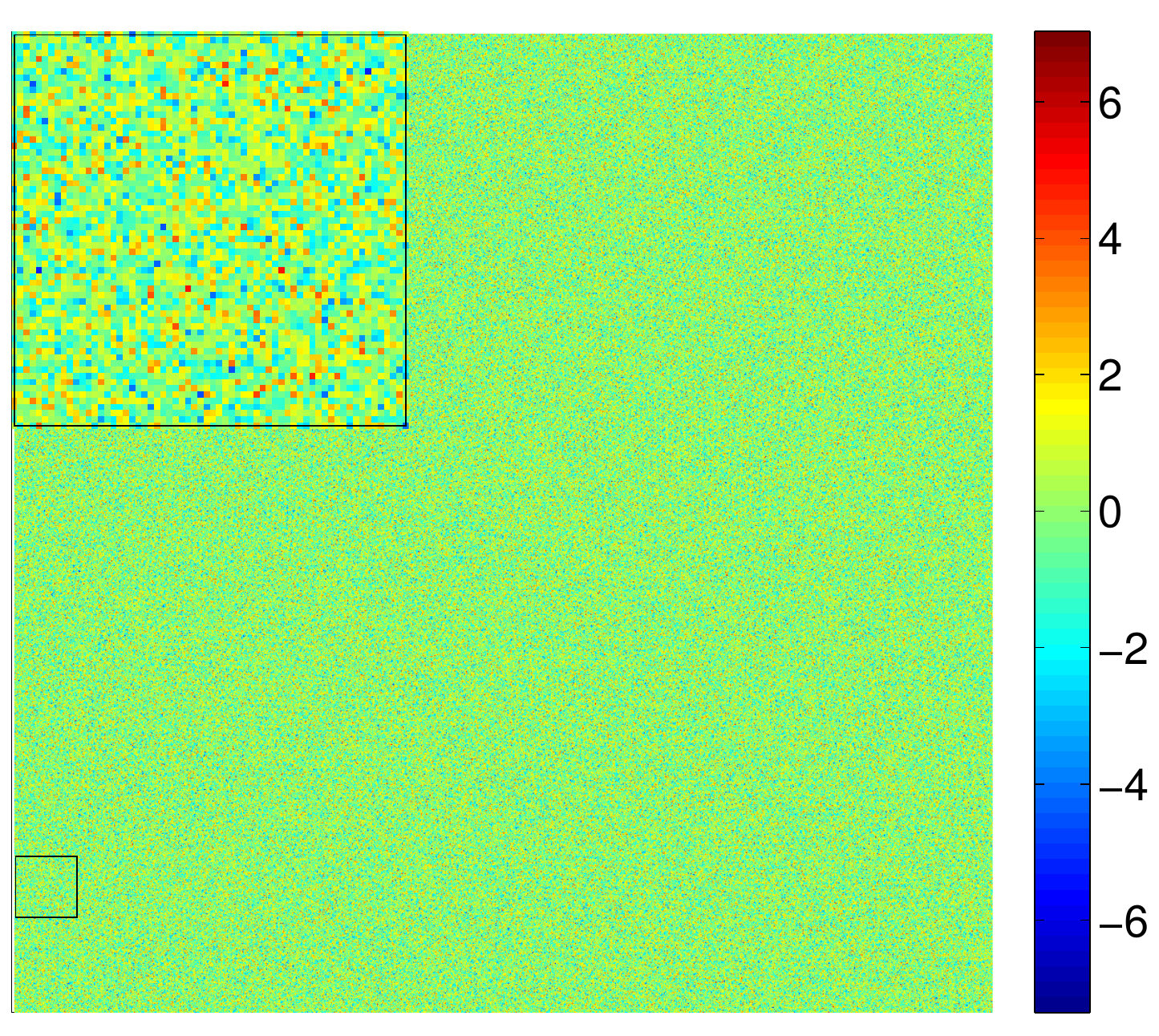}&  \includegraphics[height=1.6in]{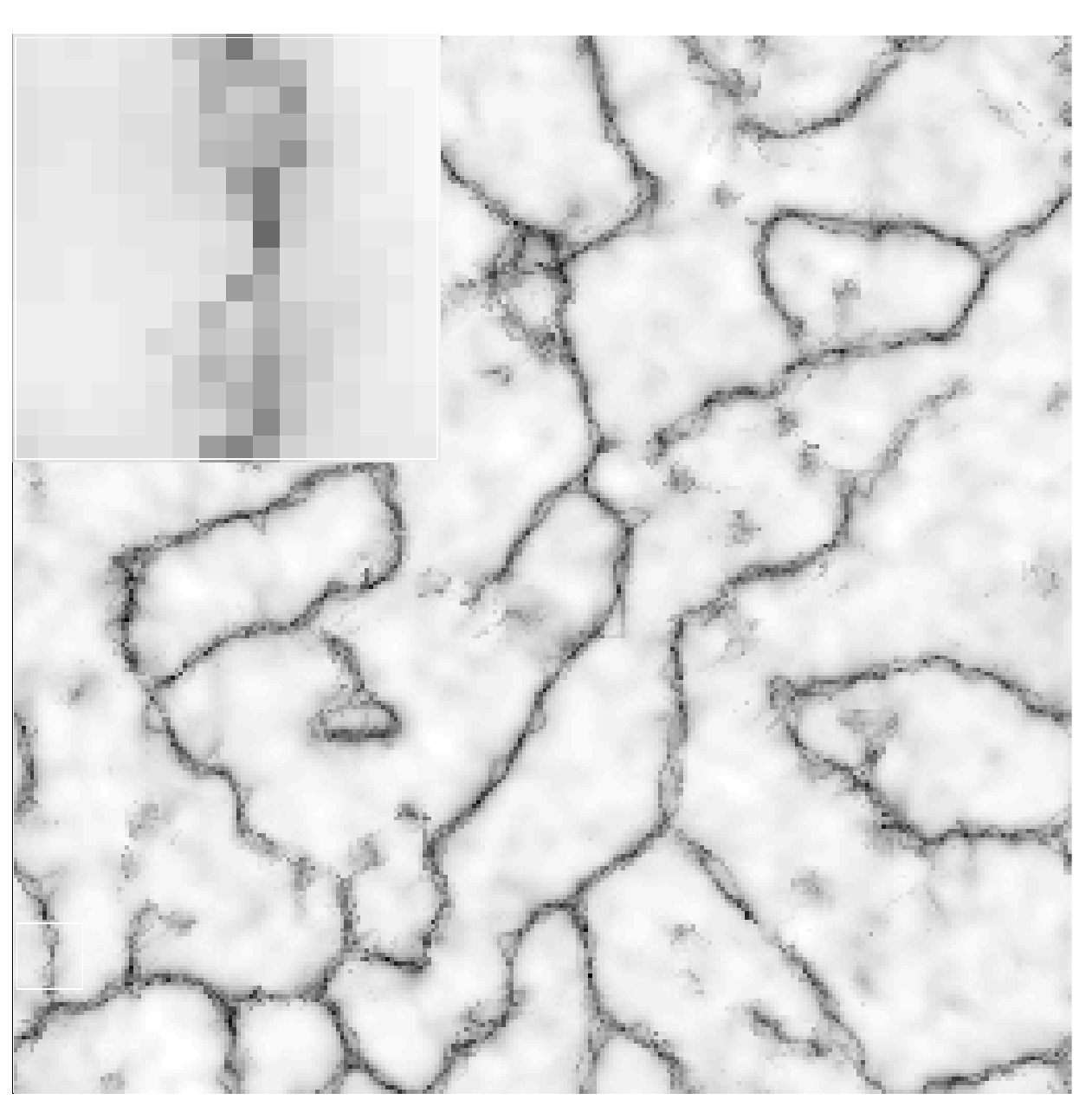}&\includegraphics[height=1.7in]{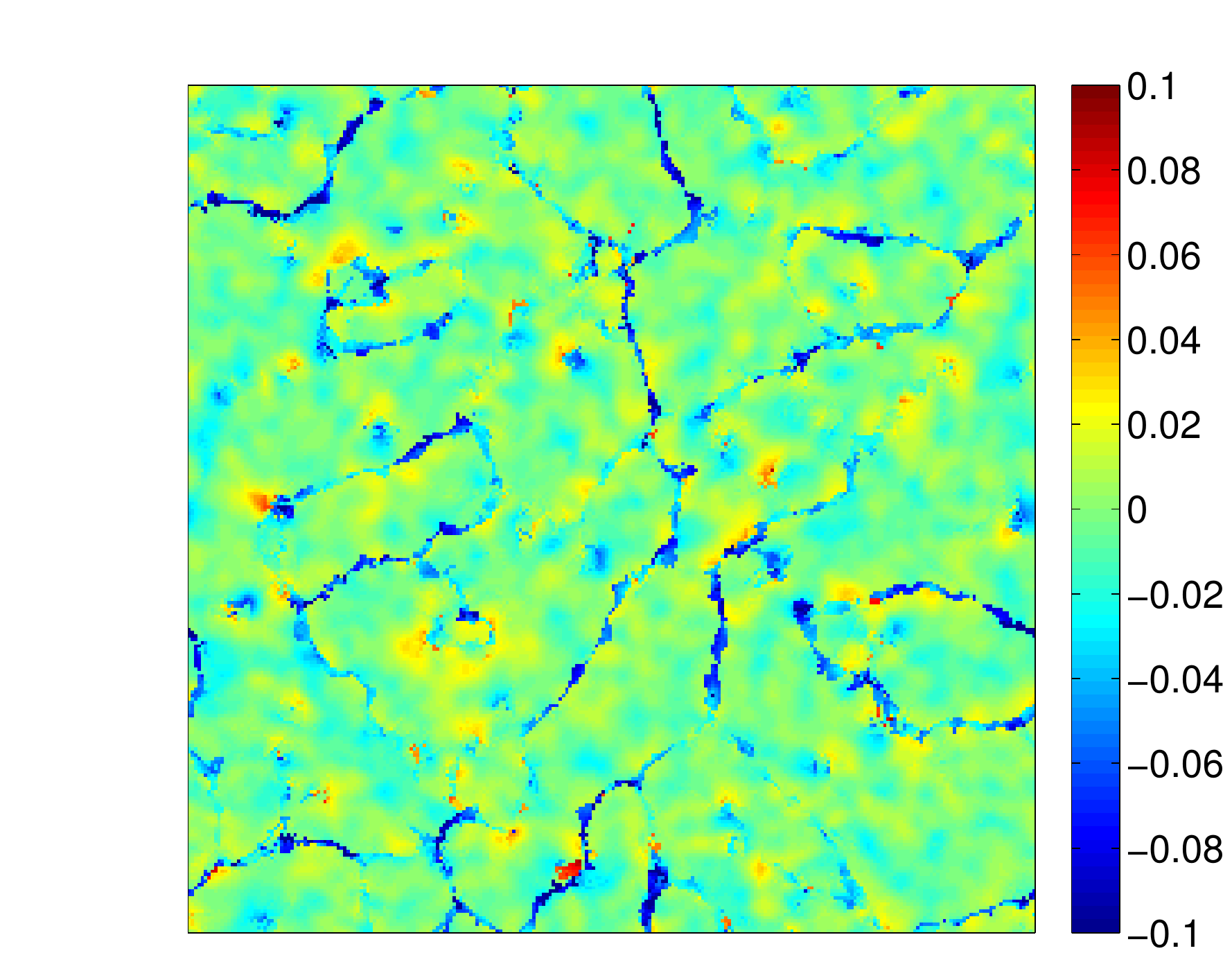}\\
   (d)&(e)&(f)
    \end{tabular}
  \end{center}
  \caption{Analysis results of noisy phase field crystal images provided by Algorithm $\ref{alg:deformed}$. First row: Results of noisy data with Gaussian white noise $0.5\mathcal{N}(0,1)$. Second row: Results of noisy data with Gaussian white noise $1.4\mathcal{N}(0,1)$. First column: input images. Second column: detected grain boundaries and isolated defects. Third column: distortion volume. Zoomed-in images show that our method can still identify isolated defects even if noise is heavy.}
\label{fig:ns}
\end{figure}

We also revisit the example shown in Figure~\ref{fig:GB3_fast} and apply Algorithm $\ref{alg:deformed}$ to identify point defects. Different to the point dislocations in the previous phase field crystal example, some point defects here do not destroy the crystal lattice nearby. For example, the point defect in the bottom-left part of the bubble raft image looks like a missing bubble and it does not influence the positions of neighbor bubbles. Therefore, methods depending on identifying neighbor bubbles might fail to detect such a kind of local defects. 
Since missing bubbles would reduce the synchrosqueezed energy at their places, Algorithm $\ref{alg:deformed}$ is still able to detect them. As an illustration, Figure \ref{fig:GB3_elastic} shows the results provided by Algorithm $\ref{alg:deformed}$ and the positions of those local defects in the left figure are accurately depicted in the right figure.

\begin{figure}[ht!]
  \begin{center}
\hspace{-4em}
\includegraphics[height=1.6in]{GB3_image.pdf}  \includegraphics[height=1.6in]{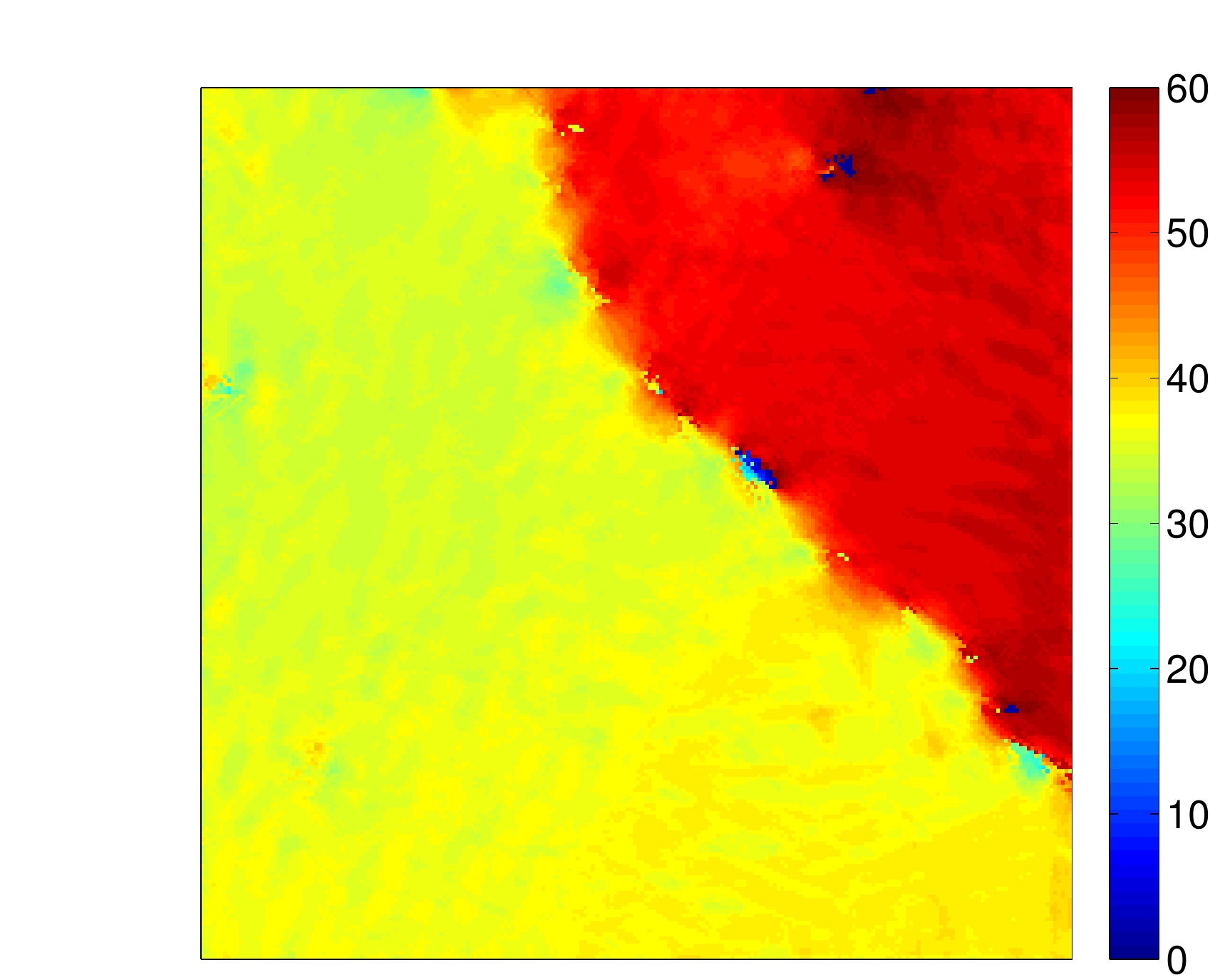}  \includegraphics[height=1.6in]{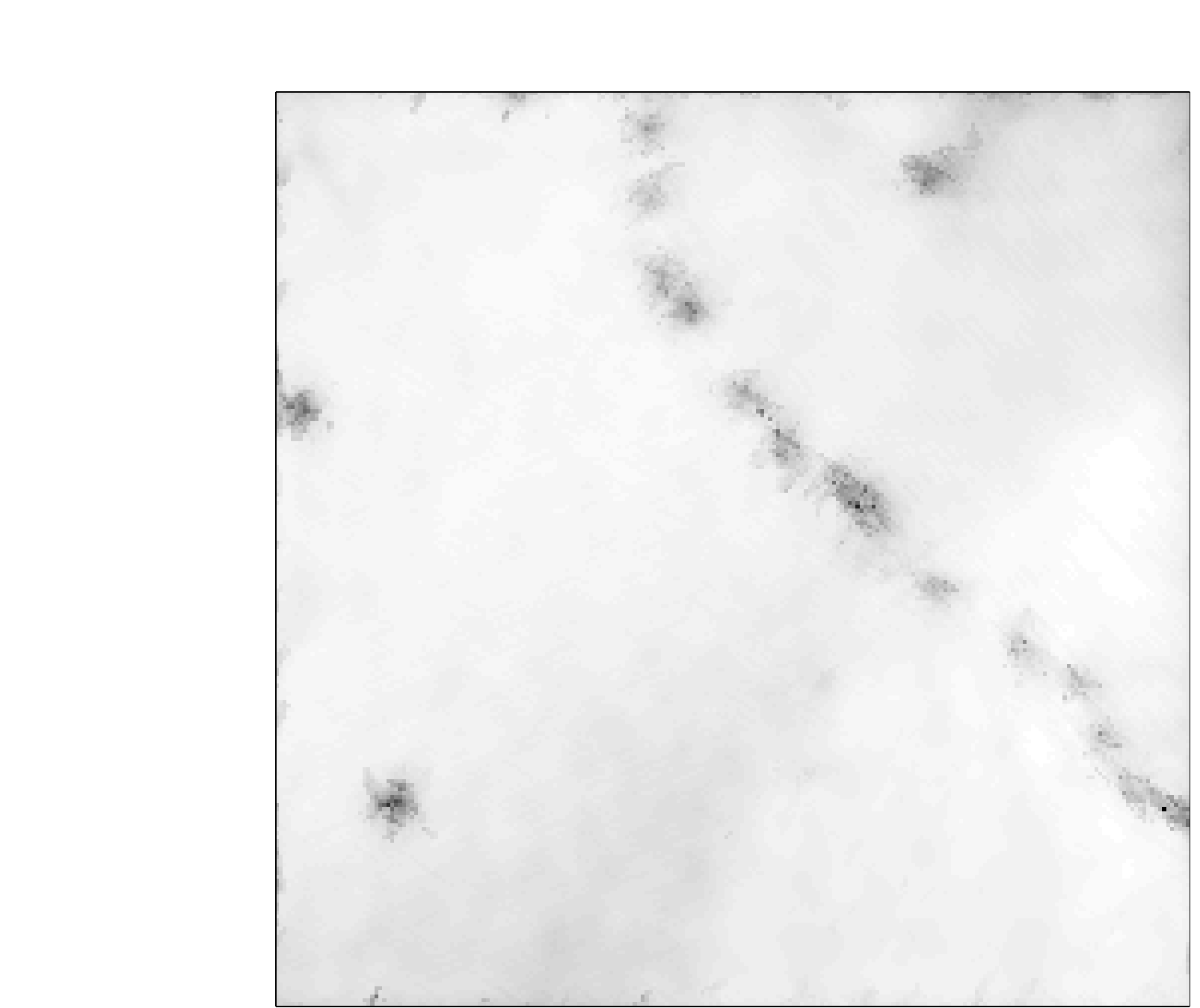}
  \end{center}
  \caption{Revisit the example in
    Figure~\ref{fig:GB3_fast}. Left: A photograph of a bubble
    raft with strong reflections and point dislocations. Middle and
    right: The weighted average angle $\Angle(b)$ and the weighted
    boundary indicator function provided by Algorithm
    $\ref{alg:deformed}$. Primary implementation parameters: $s=t=1$,
    $d=1$.}
  \label{fig:GB3_elastic}
\end{figure}

\subsection{Quantitative analysis for statistical stability}

We quantitatively analyze the performance of our algorithms in this subsection in terms of statistical stability using the example in Figure \ref{fig:GB1_elastic}. Suppose $g=f+e$ is the noisy crystal image, where $e$ is white Gaussian noise with a distribution $\sigma^2\mathcal{N}(0,1)$ and $f$ is the noiseless crystal image. We introduce the signal-to-noise ratio ({\SNR}) of the input data $g=f+e$ as follows:
\[
\SNR[dB](g)=10\log_{10}\left( \frac{\VAR (f)}{\VAR(e)}\right).
\]
We test Algorithm $\ref{alg:undeformed}$ and Algorithm $\ref{alg:deformed}$ on these noisy examples with {\SNR}'s ranging from $35$ to $-10$. All results are summarized in Figure \ref{fig:quant}.

Since the ground truth for grain boundaries and the crystal rotations is not available, we compare the results of noisy examples with those in the noiseless case. To quantify the statistical stability of the grain boundary estimation, we apply the earth mover's distance (EMD) \cite{EMDD2} to measure the distance between a boundary indicator function of a noisy example and the one of the noiseless example. The boundary indicator functions are converted to grayscale images. They can be considered as two-dimensional histograms discribing the distribution of defects. At each pixel, the image intensity is from 0 to 255. One unit in the grayscale image is one unit of the mass of the distribution. The EMD in this paper is the total mass per pixel that we need to move from one distribution to match the other one. A smaller EMD indicates better statistical stability. Figure \ref{fig:quant} (left) plots the EMD as a function of {\SNR} for the boundary indicator functions given by Algorithm $\ref{alg:undeformed}$ and Algorithm $\ref{alg:deformed}$. Although the EMD functions gradually increase as the {\SNR} decreases, the values of these functions remain reasonably small, which means that our algorithms give similar grain boundary estimates. 

To quantify the statistical stability of the crystal rotation estimation, we compute the difference of the rotation estimations in noisy and noiseless cases and measure the difference of its mean and standard deviation. The mean of the difference as a function of {\SNR} is shown in Figure \ref{fig:quant} (middle) and the standard deviation function is shown in Figure \ref{fig:quant} (right). Even though noise is heavy, most estimation errors are bounded by a small degree. This shows that our algorithms are statistically stable against noise.

\begin{figure}[ht!]
  \begin{center}
  \begin{tabular}{ccc}
\includegraphics[height=1.7in]{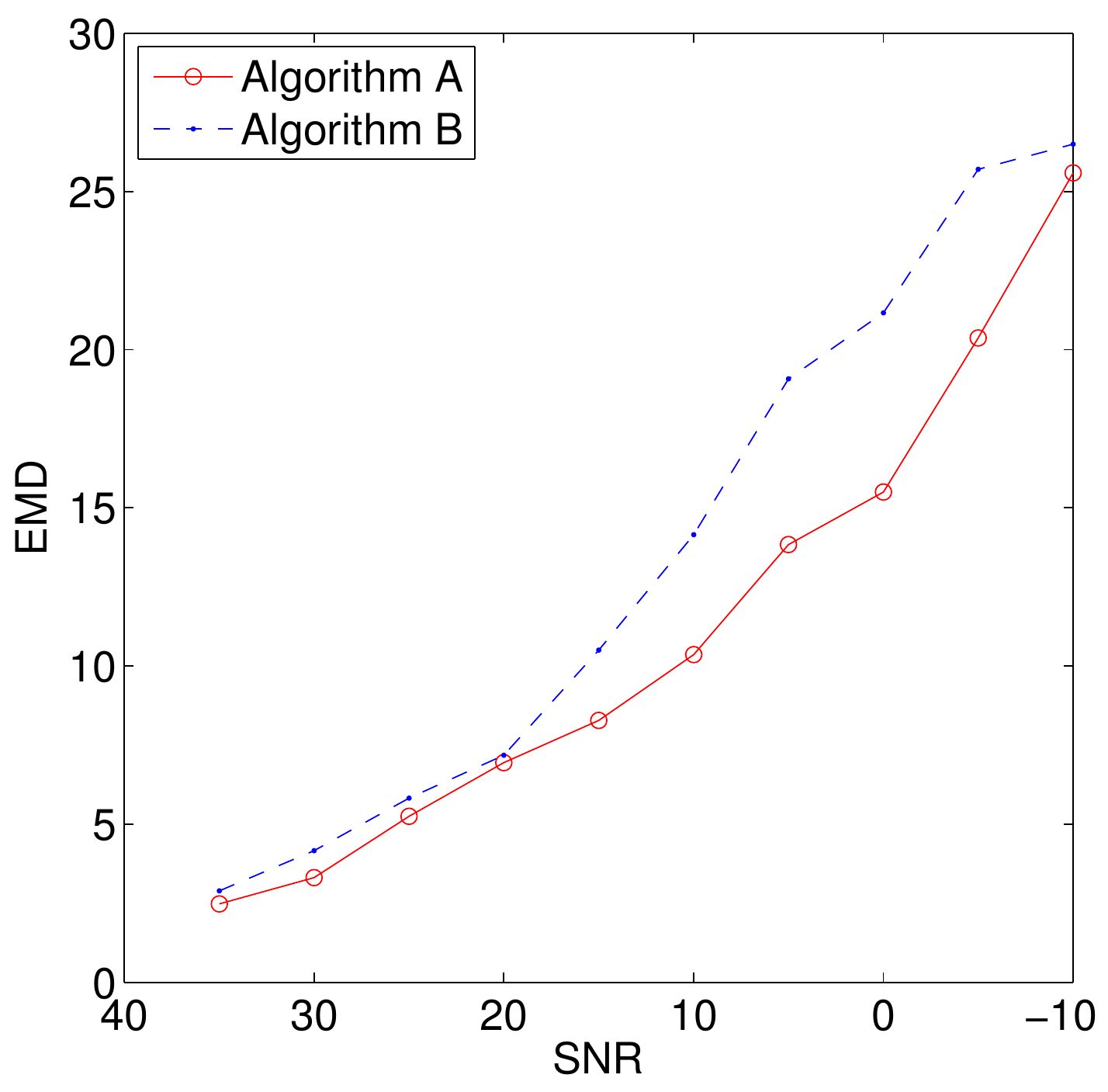} & \includegraphics[height=1.7in]{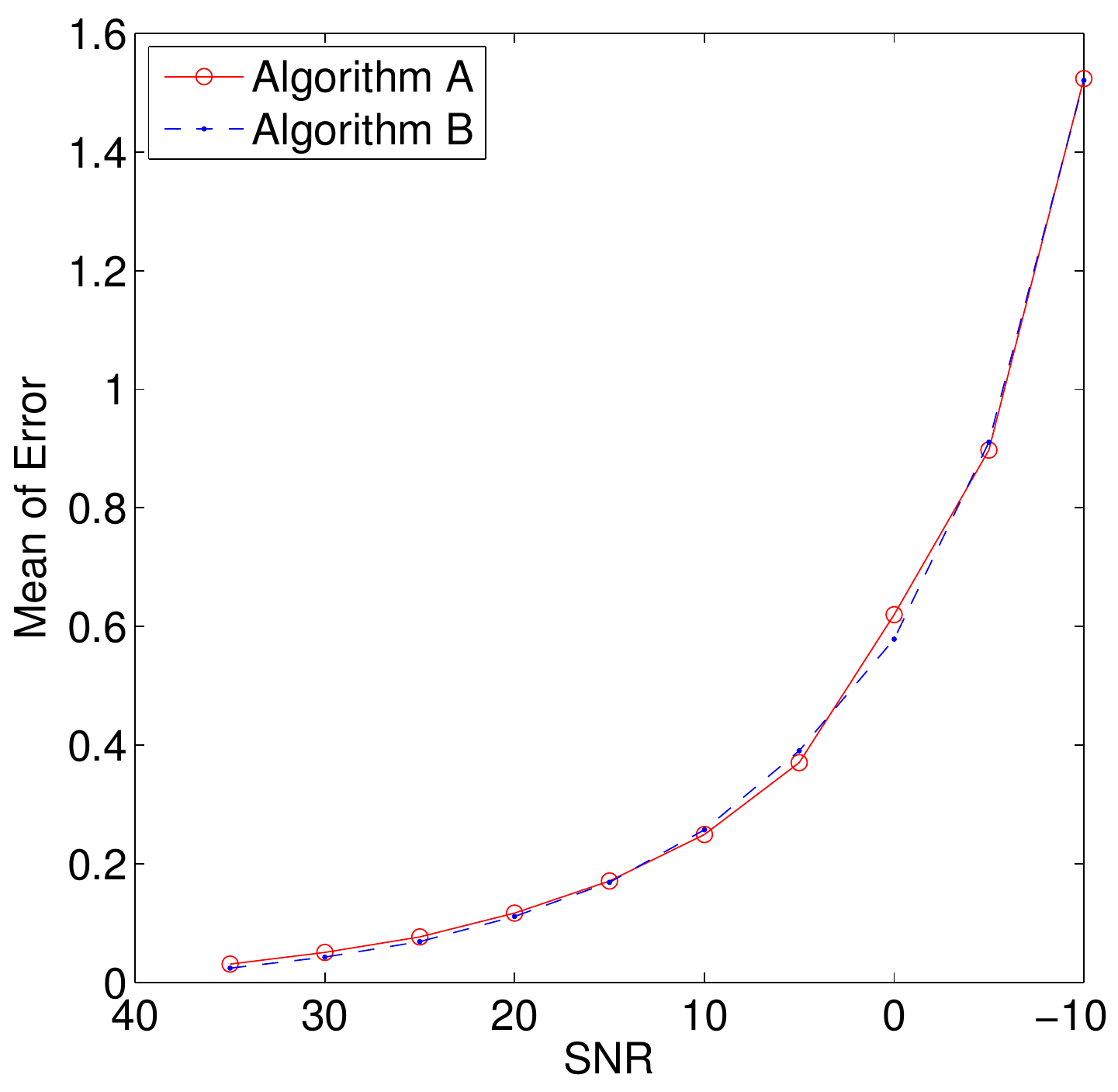} & \includegraphics[height=1.7in]{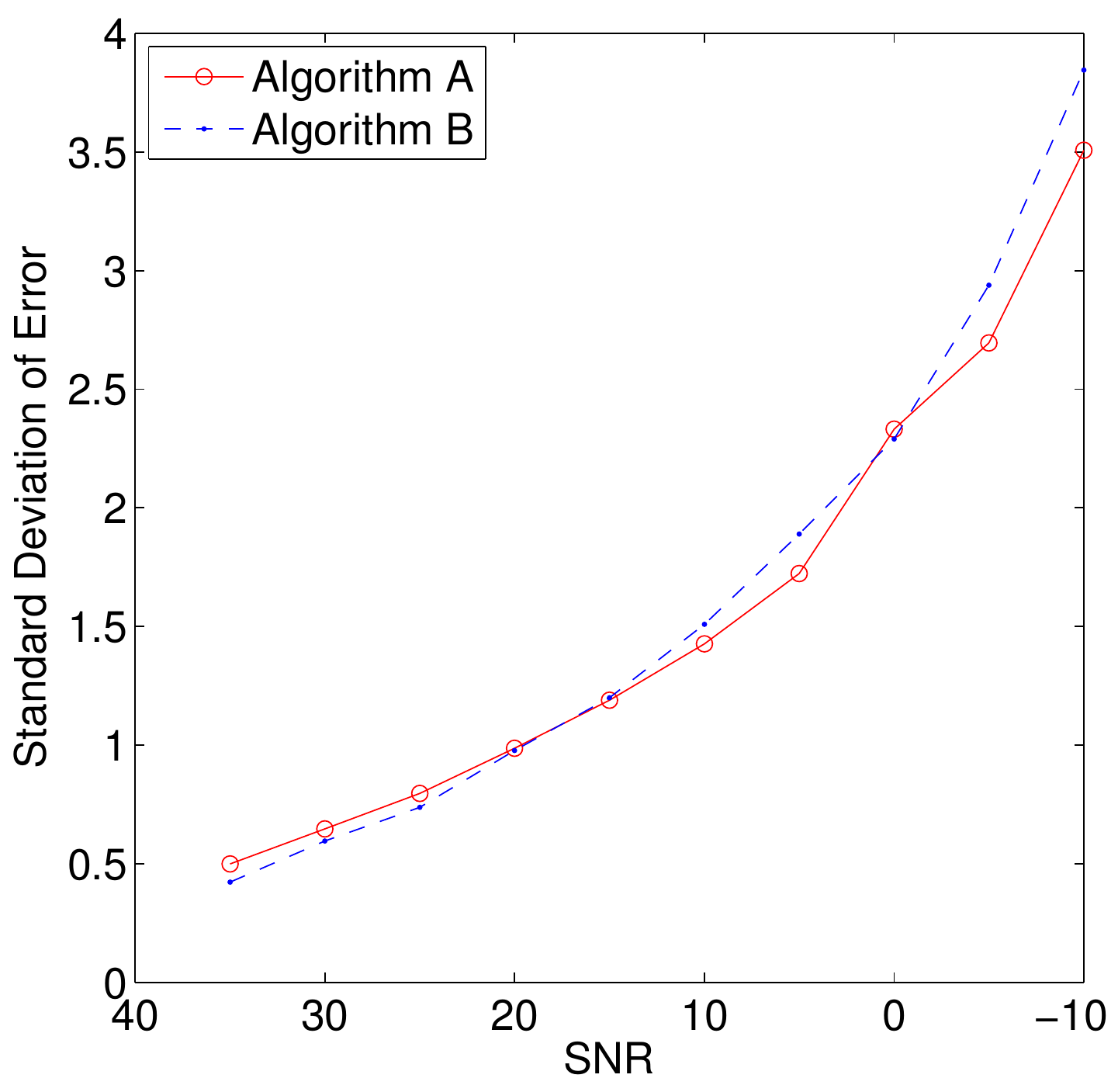}
\end{tabular}
  \end{center}
  \caption{Left: The earth mover's distance (EMD) between the estimated grain boundaries of a noisy image and that of a clean image. Middle: The mean of the estimation error of crystal rotations. Right: The standard deviation of the estimation error of crystal rotations. Data plotted above is the average of $10$ independent realizations.}
  \label{fig:quant}
\end{figure}

\section{Conclusion}
\label{sec:conclusion}

This paper has proposed a new model for atomic crystal images and an efficient tool for their multiscale analysis. Through various synthetic and real data, it has been shown that the proposed methods are able to provide robust and reliable estimates of mesoscopic properties, e.g., crystal defects, rotations, elastic deformations and grain boundaries, in a short time. Since these methods are well suitable for parallelization, the runtime will be considerably reduced by parallel computing. This would be appealing in the analysis of a series of large crystal images to study the time evolution of crystals on a microscopic length scale.

Note that the recovery of local deformation gradients is not smooth and its local distortion volume is not sparsely supported in some cases, due to the estimate error of local wave vectors and the lack of regularization. This inspires future work by combining synchrosqueezed transform with regularization and optimization to find a more accurate and smoother local deformation gradient with a sparser local distortion volume.

We focus on the analysis of images with the presence of only one type of crystal and without solid and liquid interfaces in this paper. The extension is not difficult. In fact, in the presence of liquid, the solid-liquid interface can be identified as ``boundary between grains'' by our method. One could use imaging methods for detecting geometric objects in the cartoon part of images \cites{Jung:12,Cai:13} to identify the liquid part immediately  after grains are identified by our method. Moreover, when the given image  consists of multiple types of crystals, local Fourier transforms taken at enough sampling positions can identify reference crystals. Fixing one type of reference crystals, we apply our method to extract the boundaries, the rotations, the defects and the deformations of grains of this type. A complete analysis can be obtained by combining the results of each type of reference crystals.

Another interesting and challenging future direction is to analyze crystal images corresponding to complex lattices. First, it might be difficult to automatically identify reference crystals directly from a given image. The information hidden in the image is very redundant and hence feature extraction and dimension reduction techniques are necessary. Second, the well-separation condition for synchrosqueezed transforms may not hold due to a large number of underlying wave-like components of each grain. A $2D$ generalization of the $1D$ diffeomorphism based spectral analysis method in \cite{Yang:preprint} may provide a solution to this problem.
 
The current methods can be easily extended to $3D$ crystal analysis by
designing a $3D$ synchrosqueezed transform. This should be relevant for
applications. 

\medskip

\noindent{\bf Acknowledgments.} J.L. was partially supported by the Alfred P.~Sloan foundation and National Science Foundation under award DMS-1312659. H.Y. and L.Y. were partially supported by the National Science Foundation under award DMS-0846501 and the U.S. Department of Energy's Advanced Scientific Computing Research program under award DE-FC02-13ER26134/DE-SC0009409. H.Y. also thanks the support of National Science Foundation under award ACI-1450372 and AMS-Simons Travel Award. We are grateful to Benedikt Wirth for inspiring discussions and for providing us image data from PFC simulations.
We thank Matt Elsey for helpful comments on an earlier version of the manuscript. J.L. is also grateful to Robert V.~Kohn who brought his attention to this problem originally. 

\bibliographystyle{amsxport}
\bibliography{Synchrosqueeze}

\end{document}